\begin{document}

\newcommand{\F}{\mathcal{F}}
\newcommand{\R}{\mathbb R}
\newcommand{\T}{\mathbb T}
\newcommand{\N}{\mathbb N}
\newcommand{\Z}{\mathbb Z}
\newcommand{\C}{\mathbb C}  
\newcommand{\h}[2]{\mbox{$ \widehat{H}^{#1}_{#2}(\R)$}}
\newcommand{\hh}[3]{\mbox{$ \widehat{H}^{#1}_{#2, #3}$}} 
\newcommand{\n}[2]{\mbox{$ \| #1\| _{ #2} $}} 
\newcommand{\x}{\mbox{$X^r_{s,b}$}} 
\newcommand{\xx}{\mbox{$X_{s,b}$}}
\newcommand{\X}[3]{\mbox{$X^{#1}_{#2,#3}$}} 
\newcommand{\XX}[2]{\mbox{$X_{#1,#2}$}}
\newcommand{\q}[2]{\mbox{$ {\| #1 \|}^2_{#2} $}}
\newcommand{\e}{\varepsilon}
\newcommand{\om}{\omega}
\newcommand{\lb}{\langle}
\newcommand{\rb}{\rangle}
\newcommand{\ls}{\lesssim}
\newcommand{\gs}{\gtrsim}
\newcommand{\pd}{\partial}
\newtheorem{lemma}{Lemma} 
\newtheorem{kor}{Corollary} 
\newtheorem{theorem}{Theorem}
\newtheorem{prop}{Proposition}

\title[mKdV and KdV hierarchies]{On the hierarchies of higher order mKdV and KdV equations}

\author[Axel~Gr{\"u}nrock]{Axel~Gr{\"u}nrock}

\address{Axel~Gr{\"u}nrock: Rheinische Friedrich-Wilhelms-Universit\"at Bonn,
Mathematisches Institut, Endenicher Allee 60, 53115 Bonn, Germany.}
\email{gruenroc@math.uni-bonn.de}

\thanks{The author was partially supported by the Deutsche Forschungsgemeinschaft, Sonderforschungsbereich 611.}

\subjclass[2000]{35Q53}

\begin{abstract}
The Cauchy problem for the higher order equations in the mKdV hierarchy is investigated with data in the spaces
$\widehat{H}^r_s(\R)$ defined by the norm
$$\n{v_0}{\widehat{H}^r_s(\R)} := \n{\langle \xi \rangle ^s\widehat{v_0}}{L^{r'}_{\xi}},\quad \langle \xi \rangle=(1+\xi^2)^{\frac12}, \quad \frac{1}{r}+\frac{1}{r'}=1.$$
Local well-posedness for the $j$th equation is shown in the parameter range $2 \ge r >1$, $s \ge \frac{2j-1}{2r'}$. The proof
uses an appropriate variant of the Fourier restriction norm method. A counterexample is discussed to show that the Cauchy problem
for equations of this type is in general ill-posed in the $C^0$-uniform sense, if $s<\frac{2j-1}{2r'}$. The results for $r=2$ - 
so far in the literature only if $j=1$ (mKdV) or $j=2$ - can be combined with the higher order conservation laws for the mKdV
equation to obtain global well-posedness of the $j$th equation in $H^s(\R)$ for $s\ge\frac{j+1}{2}$, if $j$ is odd, and for
$s\ge\frac{j}{2}$, if $j$ is even. - The Cauchy problem for the $j$th equation in the KdV hierarchy with data in
$\widehat{H}^r_s(\R)$ cannot be solved by Picard iteration, if $r> \frac{2j}{2j-1}$, independent of the size of $s\in \R$.
Especially for $j\ge 2$ we have $C^2$-ill-posedness in $H^s(\R)$. With similar arguments as used before in the mKdV context
it is shown that this problem is locally well-posed in $\widehat{H}^r_s(\R)$, if $1<r\le \frac{2j}{2j-1}$ and 
$s > j - \frac32 - \frac{1}{2j} +\frac{2j-1}{2r'}$. For KdV itself the lower bound on $s$ is pushed further down to
$s>\max{(-\frac12-\frac{1}{2r'},-\frac14-\frac{11}{8r'})}$, where $r\in (1,2)$. These results rely on the contraction mapping principle, and the flow map is real analytic.
\end{abstract}

\keywords{mKdV and KdV hierarchies -- Cauchy problem -- local and global well-posedness -- generalized Fourier restriction norm method}

\maketitle

\tableofcontents

\section{Introduction: Two towers}

The purpose of this paper is to investigate the Cauchy problem of the higher order
equations in the modified Korteweg-de Vries (mKdV) and Korteweg-de Vries (KdV) hierarchies. In preparation for that we recall
some basic facts about KdV, mKdV, the hierarchies of generalized equations built
upon them, and their connection by the Miura transform. For the KdV part we follow essentially the exposition of Lax \cite{L}\footnote{We replace consequently $u$ by $-6u$ in this exposition in order to achieve coincidence with reference \cite{VI}.}. The starting point is the well known sequence of polynomial densities
$$P_k = P_k(u, \pd_x u, \dots , \pd_x^k u),$$
starting with $P_{-1}=u$, $P_{0}=- \frac12 u^2$, $P_{1}=- \frac12 (\pd_xu)^2 + \frac16 u^3$, of the KdV equation
\begin{equation}
 \label{KdV}
\pd_tu + \pd_x^3u=6 u \pd_x u,
\end{equation}
which was discovered by Gardner et al. in \cite{II}, \cite{V}, \cite{VI}. This sequence is usually chosen in such a way that
\begin{itemize}
 \item the polynomials $P_k$ are irreducible, i.e. each term does not contain its highest derivative linearly, and
 \item the rank $r_{KdV}$ of all monomials contained in $P_k$ equals $k+2$ (For KdV the rank is defined by
 $r_{KdV}=m+\frac{n}{2}$, where $m$ is the number of factors (degree) and $n$ is the total number of differentiations 
(derivative index).)
\end{itemize}
See \cite[Sections 2 and 3]{V}. Under these assumptions the polynomials $P_k$
\begin{itemize}
 \item are uniquely determined up to a multiplicative constant \cite[Theorem 4]{V},
 \item contain $c (\pd^k_xu)^2$ as the highest derivative term (except for $k=-1$), where $c\neq 0 $ \cite[Theorem 6]{V}. 
\end{itemize}
The Hamiltonians
$$H_k(u):= \int P_k(u, \pd_x u, \dots , \pd_x^k u) dx$$
corresponding to the densities $P_k$ are constants of motion of \eqref{KdV} and lead to a sequence of a priori estimates for
the integer Sobolev norms $\|u\|_{H^k}$ of solutions of \eqref{KdV}. More precisely we have
\begin{equation}
 \label{apriori}
\|u\|^2_{H^k} \ls H_k(u) + f_k(\|u\|_{L^2}) \ls \|u\|^2_{H^k} + g_k(\|u\|_{L^2}) 
\end{equation}
with some positive nonlinear functions $f_k$ and $g_k$, see \cite[Lemme 4]{S79} and \cite[Theorem 3.1]{L75}. A recursion
formula for the gradients $G_k(u)$, defined by
\begin{equation}
 \label{grad}
\lb G_k(u),v\rb:= \lb\frac{\delta H_k(u)}{\delta u} ,v\rb:= \frac{d}{d \e}H_k(u+\e v) \Big{|}_{\e=0},
\end{equation}
 where $\lb \cdot, \cdot \rb$ denotes the inner product on $L^2$, goes back to Lenard, see \cite[Section 5]{VI}. We have
\begin{equation}
 \label{Lenard}
\pd_x G_{k+1}(u) = c_k N G_k(u),\quad \mbox{with} \quad N=N(u)=\pd_x^3 - 2(\pd_x u + u \pd_x).
\end{equation}
Here the operator $\pd_x u$ is to be understood as $\pd_x u (f)= (\pd_x u)f + u (\pd_x f)$. In 1968 Lax \cite{L68}
introduced the hierarchy of higher order KdV equations
\begin{equation}
 \label{hoKdV1}
\pd_tu = K_j(u)
\end{equation}
by the aid of the commutators $K_j(u)= [B_j(u),L]$ with the Schr\"odinger operator $L=-\pd_x^2+3u$ and the skew symmetric differential operator $B_j = \pd_x^{2j+1} + \sum_{k=0}^{j-1} b_{jk}\pd_x^{2k+1}+ \pd_x^{2k+1}b_{jk}$ with $b_{jk}$ suitably
chosen so that the commutators $[B_j(u),L]$ become multiplication operators. Following a discovery of Gardner he showed that
$K_j(u)=c_j \pd_x G_j(u)$ with the gradients defined above, so that \eqref{hoKdV1} becomes
\begin{equation}
 \label{hoKdV}
\pd_tu + \pd_xG_j(u)=0,
\end{equation}
which we take here as the \emph{definition} of the KdV hierarchy. Observe that \eqref{hoKdV} reduces to \eqref{KdV} in the
case when $j=1$. For $j=2$, choosing $c_1=-1$ in the recursion formula \eqref{Lenard}, we obtain
\begin{equation}
 \label{hoKdV2}
\pd_tu - \pd_x^5u + 5 \pd_x (\pd_x^2u^2 - (\pd_xu)^2-2u^3)=0
\end{equation}
and for $j=3$ we find ($c_1=c_2=-1$)
\begin{equation}
 \label{hoKdV3}
\pd_tu + \pd_x^7u - 7 \pd_x(\pd_x^4u^2 -2 \pd_x^2(\pd_xu)^2+ (\pd_x^2u)^2 - 10 u\pd_x(u\pd_xu)+5u^4)=0.
\end{equation}
To derive precise explicit expressions for the equations \eqref{hoKdV}  in general leads to hard and lengthy calculations
as were carried out in \cite{AS}. A less precise representation will be sufficient for our purposes. Since the operation
$\frac{\delta}{\delta u}$ reduces the number of factors by one, we get from the rank condition on the conserved densities
that there exists constants $c_{j,k,l}$, such that \eqref{hoKdV} can be rewritten as
\begin{equation}
 \label{hoexp1}
\pd_t u + c_{j,1,2j+1}\pd_x^{2j+1}u + \sum_{k=2}^{j+1}N_{jk}(u)=0,
\end{equation}
with $c_{j,1,2j+1} \neq 0$ and 
\begin{equation}
 \label{hoexp2}
N_{jk}(u)= \sum_{|l|=2(j-k)+3}c_{j,k,l}\pd_x^{l_0}\prod_{i=1}^k \pd_x^{l_i}u, \quad l_0 \ge 1.
\end{equation}
Clearly, this representation is not unique, and in general the equations \eqref{hoexp1}, \eqref{hoexp2} will not belong to
the KdV hierarchy. If considered on the real line, these equations admit rescaling. In fact a short calculation shows that,
if $u$ is a solution of \eqref{hoexp1}, \eqref{hoexp2} with index $j$, then so is $u_{\lambda}$, defined for $\lambda >0$ by
$u_{\lambda}(x,t)= \lambda ^2 u(\lambda x, \lambda^{2j+1}t)$. Especially we have $u_{\lambda}(x,0)= \lambda ^2 u(\lambda x,0)$,
which is independent of $j$. Consequently, the critical Sobolev regularity\footnote{determined by the demand that $\|u_{\lambda}(\cdot,0)\|_{\dot{H}^s(\R)}=\|u(\cdot,0)\|_{\dot{H}^s(\R)}$} of all these equations is $s = - \frac32$, just as for the
usual KdV equation.

\quad

According to a result of Gardner (cf. \cite[Section 2]{IV}, for a simpler argument see \cite[Theorem 3.2]{L76}) the Hamiltonians
$H_k$ are in involution with respect to both Poisson brackets $\{F,H\}_1:=\lb G_ F,\pd_x G_H\rb$ and $\{F,H\}_2:=\lb G_F,N G_H\rb$
($G_F$ being the gradient of F), that means we have
\begin{equation}
 \label{invol}
\{H_k,H_l\}_1=\lb G_k,\pd_x G_l\rb=0 \quad \mbox{and} \quad \{H_k,H_l\}_2=\lb G_k,N G_l\rb=0
\end{equation}
for all $k,l \ge -1$. Hence, if $u$ is a real valued solution of \eqref{hoKdV}, then
$$\frac{d}{dt}H_k(u)= \lb G_k(u),\pd_t u \rb = -\lb G_k(u),\pd_x G_j(u)\rb=0.$$
Thus the $H_k(u)$ are conserved quantities not only for the KdV equation itself but also for its higher order generalizations.
Consequently the a priori estimates \eqref{apriori} are equally valid for the solutions of \eqref{hoKdV}.

\quad

We turn to our main objective, the mKdV equation
\begin{equation}
 \label{mKdV}
\pd_tv + \pd_x^3v=6 v^2 \pd_x v
\end{equation}
and its higher order generalizations. Our primary references here are Olver \cite{O} and Adler-Moser \cite{AM}, see also \cite{CP},
\cite{ZC}, and \cite{Ma}. The key fact is the famous Miura transform \cite{I}
$$v \mapsto \pd_x v + v^2 =:u,$$
which maps a solution $v$ of \eqref{mKdV} to a solution $u$ of \eqref{KdV}. As was observed in \cite{II}, this immediately gives a sequence of conserved densities $\widetilde{P}_k(v):=P_{k-1}(\pd_xv+v^2)$, $k\ge 0$, of rank $k+1$, where for mKdV the rank
is defined by $r_{mKdV}=\frac12(m+n)$. (As above: $m=$degree, $n=$derivative index; for $k=-1$ the density $\widetilde{P}_{-1}(v)=v$ is immediate from the equation.) Now the corresponding Hamiltonians
$$\widetilde{H}_k(v) = \int \widetilde{P}_k(v)dx = H_{k-1}(\pd_xv+v^2)$$
are introduced as well as their gradients $\widetilde{G}_k(v)$, for which a short calculation shows the identity
\begin{equation}
 \label{mgrad}
\widetilde{G}_{k+1}(v)=(-\pd_x+2v)G_k(\pd_xv+v^2).
\end{equation}
The higher order mKdV equations can then be defined by
\begin{equation}
 \label{homKdV}
\pd_tv + \pd_x\widetilde{G}_j(v)=0
\end{equation}
in complete analogy with \eqref{hoKdV}\footnote{The construction reported on here has been carried out for a number of nonlinear
evolution equations, for which there exists a sequence of polynomial conserved densities of increasing order, see \cite[Chapter 5]{Ma} and \cite{O}. An interesting further example is the hierarchy of higher order sine-Gordon equations considered by Olver,
which looks like a potential version - $v$ replaced by $\pd_xv$ - of the mKdV hierarchy. See Example 6 in \cite{O}. Clearly the
arguments developed below to treat the Cauchy problem apply as well to this sequence of equations.}. For the Poisson bracket
$\{\cdot,\cdot\}_1$ one has by \eqref{mgrad}, \eqref{invol} and the easily checked identity
\begin{equation}
 \label{Gerd}
(\pd_x+2v)\pd_x(-\pd_x+2v)=-N(\pd_xv +v^2)
\end{equation}
that
$$\{\widetilde{H}_{k+1},\widetilde{H}_{l+1}\}_1=-\lb G_k(\pd_xv+v^2),N(\pd_xv+v^2) G_l(\pd_xv+v^2)\rb=0.$$
Hence we can conclude as in the case of the higher order KdV equations that $\frac{d}{dt}\widetilde{H}_k(v)=0$, if $v$ is a
real valued solution of \eqref{homKdV}. Again, the $\widetilde{H}_k(v)$ are conserved not only for solutions of mKdV itself
but also for those of \eqref{homKdV}, so that - after some applications of interpolation and Young's inequalities -  we can rely on the a priori estimates \eqref{apriori} with $u$ replaced by $v$
and $H_k$ replaced by $\widetilde{H}_k$ for all equations in the mKdV hierarchy. Moreover, combining \eqref{Lenard}, \eqref{Gerd},
and \eqref{mgrad} we obtain
$$\pd_x G_{j+1}(\pd_xv + v^2)= -c_j (\pd_x +2v)\pd_x \widetilde{G}_{j+1}(v),$$
so that with the choice $c_j=-1$ we have
$$\pd_xG_j(u)=(\pd_x+2v)\pd_x\widetilde{G}_j(v),$$
whenever $u$ is the Miura transform of $v$, i.e. $u=\pd_xv+v^2$. This gives
$$\pd_tu+\pd_xG_j(u)=(\pd_x+2v)(\pd_tv+\pd_x\widetilde{G}_j(v)).$$
Hence, if $v$ solves \eqref{homKdV} of index $j$, then its Miura transform $u$ solves the corresponding higher order KdV equation.

\quad

The recursion formula
$$\widetilde{G}_{k+1}(v)=\widetilde{c}_k(\pd_x^2 - 4v\pd_x^{-1}v\pd_x)\widetilde{G}_{k}(v)$$
with a formal antiderivative $\pd_x^{-1}$ is now easily derived using Lenard's formula \eqref{Lenard} and the identity
\eqref{Gerd}, alternatively we can write
\begin{equation}
 \label{Olver}
\pd_x \widetilde{G}_{k+1}(v)=\widetilde{c}_k(\pd_x^2-4v^2-4(\pd_xv)\pd_x^{-1}v)\pd_x\widetilde{G}_{k}(v),
\end{equation}
cf. (16) in \cite{O}. These recursion formulas can be used to derive exact explicit expressions for the higher order modified
equations, starting with mKdV itself, to which \eqref{homKdV} reduces in the case $j=1$. Choosing again $c_1=c_2=-1$ we obtain for
$j=2$
\begin{equation}
 \label{homKdV2}
\pd_tv - \pd_x^5v +  \pd_x (10(v^2\pd_x^2v +v (\pd_xv)^2)-6v^5)=0
\end{equation}
and for $j=3$
\begin{multline}
 \label{homKdV3}
\pd_tv + \pd_x^7v - 14 \pd_x ((5\pd_xv)^2\pd_x^2v+3v(\pd_x^2v)^2+4v(\pd_xv)(\pd_x^3v)+v^2(\pd_x^4v)) \\
+70 \pd_x (v^4\pd_x^2v +2 v^3 (\pd_xv)^2)-20\pd_x v^7=0,
\end{multline}
almost as in \cite[p. 151]{Ma}, where a different sign convention is used. The equation for $j=4$ can also be found in that reference - it takes four lines and contains 15 monomials. For $j \in \{1,2,3\}$ obviously $\pd_x \widetilde{G}_j$ is an odd
function of $v$, a property, which carries over inductively to the higher order equations by \eqref{Olver}. Combining this with
the rank condition for mKdV we find the following explicit expressions for \eqref{homKdV}:
\begin{equation}
 \label{homexp1}
\pd_t v + \widetilde{c}_{j,1,2j+1}\pd_x^{2j+1}v + \sum_{k=1}^{j}\widetilde{N}_{jk}(v)=0,
\end{equation}
where $\widetilde{c}_{j,1,2j+1} \neq 0$ and 
\begin{equation}
 \label{homexp2}
\widetilde{N}_{jk}(v)= \sum_{|l|=2(j-k)+1}\widetilde{c}_{j,k,l}\pd_x^{l_0}\prod_{i=1}^{2k+1} \pd_x^{l_i}v.
\end{equation}
Again these representations are not unique, and not for all choices of the constants $\widetilde{c}_{j,k,l}$ these equations
belong to the mKdV hierarchy. In the subsequent considerations we will always choose $c_{j,k,l}=\widetilde{c}_{j,k,l}=(-1)^{j+1}$.
Concerning the scale invariance the following is easily checked: If $v$ is a solution of \eqref{homexp1}, \eqref{homexp2}
defined on the real line, then so is $v_{\lambda}$ given by $v_{\lambda}(x,t)=\lambda v(\lambda x, \lambda ^{2j+1}t)$; the
critical Sobolev regularity for all equations in \eqref{homexp1}, \eqref{homexp2}, independent of $j$, is $s=-\frac12$,
as is well known in the case $j=1$ (usual mKdV).

\section{Statement of results}

\subsection{Concerning the higher order mKdV equations}

The Cauchy problem $v(x,0)=v_0(x)$ for the mKdV equation 
is known to be locally well posed for data $u_0$ in the classical Sobolev spaces $H^s (\R)$ if $s \ge \frac{1}{4}$, and ill posed in the sense that the mapping data upon solution is no longer uniformly continuous, if $s< \frac{1}{4}$. Both, the positive and the negative result, are due to Kenig, Ponce and Vega, see \cite[Theorem 2.4]{KPV93} and \cite[Theorem 1.3]{KPV01}, respectively.
So there is a considerable gap of $\frac34$ derivatives between the optimal local well-posedness result and the scaling
prediction. As was shown by the author in \cite{G04} and in a collaboration with Vega \cite{GV}, this gap can be closed except for the endpoint case, if data in the function spaces $\widehat{H}^r_s(\R)$ are considered, which are defined by the norm
$$\n{v_0}{\widehat{H}^r_s(\R)} := \n{\langle \xi \rangle ^s\F_x{v_0}}{L^{r'}_{\xi}},\quad \langle \xi \rangle=(1+\xi^2)^{\frac12}, \quad \frac{1}{r}+\frac{1}{r'}=1,$$
where $\F_x$ denotes the Fourier transform (in the space variable).
We remark that these spaces coincide with $B_{r',k}$ (with weight $k(\xi)=\lb\xi\rb^s$) 
introduced by H\"ormander, cf. \cite{Hoe83}, Section 10.1. The idea to consider them
as data spaces for nonlinear evolution equations goes back to the work of
Cazenave, Vega, and Vilela \cite{CVV01}, where corresponding weak $L^{r'}$-norms
are used. Yet another alternative class of data spaces has been considered by
Vargas and Vega in \cite{VV01}. In the more general setting of $\widehat{H}^r_s(\R)$-data the Cauchy problem for mKdV is locally well-posed
for $2\ge r >1$ and $s\ge \frac{1}{2r'}$, and ill-posed in the $C^0$-uniform sense mentioned above, if $s< \frac{1}{2r'}$, see
\cite[Theorem 1]{GV} and \cite[Section 5]{G04}. Similar results hold true for the cubic nonlinear Schr\"odinger equation in
one space dimension and the derivative nonlinear Schr\"odinger equation, see \cite{G05} for the case of the real line and \cite{GH} for the corresponding periodic problem. These well-posedness results were obtained by an appropriate variant of
Bourgain's Fourier restriction norm method, especially the function spaces $\x$ were used, which are given by their norms
$$\n{f}{X^r_{s,b}}:= \left(\int d \xi d \tau \langle \xi \rangle^{sr'}\langle \tau - \phi(\xi)\rangle^{br'} |\F{f}(\xi , \tau)|^{r'} \right) ^{\frac{1}{r'}},$$
where $\phi$ is the phase function associated with the linearized equation. For $r=2$ we write simply $X_{s,b}$ as usual.
The solution spaces in this argument are the time restriction norm spaces
$$X^r_{s,b}(\delta) := \{f = \tilde{f}|_{[-\delta,\delta] \times \R} : \tilde{f} \in X^r_{s,b}\}$$
endowed with the norm
$$\n{f}{X^r_{s,b}(\delta)}:= \inf \{ \n{\tilde{f}}{X^r_{s,b}} : \tilde{f}|_{[-\delta,\delta] \times \R} =f\}  .$$
The main result of the present paper is the following generalization of \cite[Theorem 1]{GV} to all the higher order equations
in the mKdV hierarchy. For its statement we fix $j$ and the phase function $\phi (\xi)=\xi^{2j+1}$ in the definition of $\x$
and $\x(\delta)$, respectively.
\begin{theorem}
 \label{mainres}
Let $2\ge r >1$, $s\ge s_j(r):=\frac{2j-1}{2r'}$ and $v_0 \in \widehat{H}^r_s(\R)$. Then there exist $\delta = \delta (\n{v_0}{\widehat{H}^{r}_{s(r)}(\R)}) > 0$ and a unique solution $v \in \x(\delta)$ of \eqref{homexp1}, \eqref{homexp2} with $v(x,0)=v_0(x)$. This solution is persistent
and the flow map $v_0 \mapsto v$, $\widehat{H}^r_s(\R) \rightarrow \x (\delta)$ is locally Lipschitz continuous.
\end{theorem}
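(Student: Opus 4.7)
The plan is a Picard iteration for the Duhamel formulation
\begin{equation*}
v(t)=\eta(t)e^{-t\pd_x^{2j+1}}v_0-\eta(t/\delta)\int_0^t e^{-(t-t')\pd_x^{2j+1}}\Big(\widetilde{c}_{j,1,2j+1}^{-1}\sum_{k=1}^{j}\widetilde{N}_{jk}(v)\Big)(t')\,dt'
\end{equation*}
in a closed ball of $X^r_{s,b}(\delta)$ with $b=\tfrac{1}{r'}+\e$ for some small $\e>0$, where $\eta$ is a smooth time cutoff. Since $b>\tfrac{1}{r'}$, one has the embedding $X^r_{s,b}\hookrightarrow C_t\h{r}{s}$. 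The linear estimates for the homogeneous evolution and the Duhamel operator with phase $\phi(\xi)=\xi^{2j+1}$, including a small gain $\delta^{\e'}$ from time-truncation, are formally identical to those used in \cite{GV} for $j=1$; they depend only on abstract properties of $X^r_{s,b}$. The whole problem therefore reduces to the multilinear estimate
\begin{equation*}
\Big\|\pd_x^{l_0}\prod_{i=1}^{2k+1}\pd_x^{l_i}v_i\Big\|_{X^r_{s,b-1+\e'}}\ls\prod_{i=1}^{2k+1}\|v_i\|_{X^r_{s,b}}
\end{equation*}
at the endpoint $s=s_j(r)=\tfrac{2j-1}{2r'}$, for each $k\in\{1,\ldots,j\}$ and each multiindex $l$ with $|l|=2(j-k)+1$; the case $s>s_j(r)$ then follows by monotonicity.

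By duality in $L^{r'}$ this multilinear estimate is equivalent to bounding a $(2k+2)$-linear Fourier integral on the set $\{\sum_{i=0}^{2k+1}\xi_i=0,\ \sum_{i=0}^{2k+1}\tau_i=0\}$, carrying the weights $\lb\xi_0\rb^{-s}\lb\sigma_0\rb^{1-b-\e'}$ and $\lb\xi_i\rb^{-s}\lb\sigma_i\rb^{-b}$ for $1\le i\le 2k+1$, where $\sigma_i=\tau_i-\phi(\xi_i)$. The decisive algebraic input is the resonance identity
\begin{equation*}
\max_{0\le i\le 2k+1}\lb\sigma_i\rb\gs\Big|\xi_0^{2j+1}-\sum_{i=1}^{2k+1}\xi_i^{2j+1}\Big|
\end{equation*}
on the constraint set, where the right-hand side factors through pairwise sums and differences of the $\xi_i$ and thus, in every dyadic frequency regime, admits a lower bound of order $|\xi_{\max}|^{2j-1}$ multiplied by two further (sub)maximal frequency factors.

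A dyadic decomposition in all frequencies and modulations then reduces the $(2k+2)$-linear form to building blocks that can be controlled by the Hausdorff--Young inequality (which is why the range $r\le 2$ appears) followed by Young's convolution inequality. The resonance gain is traded against the $2(j-k)+1$ explicit derivatives $\pd_x^{l_0}\prod\pd_x^{l_i}$ and the Sobolev weights inside the product: the rank condition on $\widetilde{N}_{jk}$ ensures that derivatives, Sobolev weights, and resonance gain balance exactly at $s=\tfrac{2j-1}{2r'}$, leaving a small positive surplus to sum the dyadic pieces and to produce the $\delta^{\e'}$-slack needed to close the contraction.

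The hardest step is executing this multilinear estimate uniformly in $(k,l)$ at the endpoint. The ``pure power'' term $k=j$ (essentially $\pd_x(v^{2j+1})$) is a direct higher-order analogue of the classical mKdV estimate of \cite{GV}, but the intermediate values of $k$ force a delicate case analysis distributing the $2(j-k)+1$ derivatives according to high-high versus high-low frequency interactions, each matched with a different factorisation of the resonance polynomial. Once all multilinear estimates are in hand, the Banach fixed point theorem on a ball of $X^r_{s,b}(\delta)$ of radius controlled by $\|v_0\|_{\h{r}{s}}$, with $\delta$ depending only on the endpoint norm $\|v_0\|_{\h{r}{s_j(r)}}$, yields local existence and uniqueness; persistence in $\h{r}{s}$ comes from the embedding $X^r_{s,b}\hookrightarrow C_t\h{r}{s}$, and local Lipschitz continuity of the flow map is immediate from applying the same multilinear estimates to the equation for the difference of two solutions.
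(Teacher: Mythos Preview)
Your outline has the right overall architecture (contraction in $X^r_{s,b}$, reduction to multilinear estimates), but there is a genuine gap in the core step, and also a parameter error.

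First the small point: the embedding $X^r_{s,b}\hookrightarrow C_t\widehat{H}^r_s$ requires $b>\tfrac{1}{r}$, not $b>\tfrac{1}{r'}$ (apply H\"older in $\tau$ against $\lb\tau-\phi(\xi)\rb^{-b}$, which lies in $L^r_\tau$ iff $br>1$). For $r<2$ these differ, and with your choice $b=\tfrac{1}{r'}+\e$ the iteration scheme does not even embed into $C_t$.

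The serious gap is your treatment of the multilinear estimate. You rely on the resonance identity together with Hausdorff--Young and Young's inequality, claiming that the resonance function ``admits a lower bound of order $|\xi_{\max}|^{2j-1}$ multiplied by two further (sub)maximal frequency factors''. This is false. In the cubic case ($k=1$) the resonance factor is $(\xi_1+\xi_2)(\xi_2+\xi_3)(\xi_3+\xi_1)\Sigma$; when all $|\xi_i|\sim N$ are large but, say, $|\xi_2+\xi_3|\ll 1$, the resonance function is $\sim N^{2j}|\xi_2+\xi_3|$ and can be arbitrarily small. No amount of dyadic bookkeeping with Young's inequality recovers the loss of $2j-1$ derivatives in this regime at the endpoint $s=\tfrac{2j-1}{2r'}$. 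The paper handles exactly this region (subcase 2.1 in the proof of Theorem~\ref{cubic}) not via resonance at all, but via a genuine trilinear smoothing estimate for free solutions, $T_\ge$ in Corollary~\ref{Trill}, which ultimately comes from the curvature of $\phi(\xi)=\xi^{2j+1}$ and two applications of Hardy--Littlewood--Sobolev. Similarly, in the nonresonant region the paper uses the bilinear operators $M_{r,j}$ and their duals $M^*_{\rho',j}$ (Corollaries~\ref{xBill} and~\ref{xBill*}), gaining $\tfrac{2j}{r}+\tfrac{2j}{\rho'}$ derivatives from dispersive smoothing rather than from $\lb\sigma_i\rb$-weights.

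You also misidentify the bottleneck: the cubic term $k=1$ (with $2j-1$ derivatives) is what forces the threshold $s_j(r)=\tfrac{2j-1}{2r'}$, while the quintic and higher terms ($k\ge2$) are strictly subcritical---the paper obtains them for all $s>-\tfrac{1}{r'}$ (Theorem~\ref{quintic+}) by a completely different route, namely multilinear interpolation between an $r=2$ argument based on Kato smoothing and maximal function estimates, and an argument for $r$ close to $1$. None of this is visible in a ``resonance plus Young'' scheme.
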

Theorem \ref{mainres} is a consequence of the general local well-posedness Theorem from \cite[Theorem 2.3]{G04} and the
multilinear estimates in Section \ref{nonlin}, see Theorem \ref{cubic} and Theorem \ref{quintic+} below. The flow map
in our case is even real analytic, as follows by the implicit function Theorem. Theorem \ref{mainres} is valid not only for real, but also for complex valued functions $v$, and each factor in \eqref{homexp2} may be replaced by its complex conjugate (since
the phase functions are odd). This is important in view of the subsequent optimality result, where we consider the following
complex variant of equations \eqref{homexp1}, \eqref{homexp2}.
\begin{equation}
 \label{complex}
\pd_t v +(-1)^{j+1}\pd_x^{2j+1}v + \sum_{k=1}^j\sum_{l=0}^{2(j-k)+1}a_{jkl}(\pd_x^l|v|^{2k})(\pd_x^{2(j-k)+1-l}v)=0
\end{equation}
\begin{prop}
 \label{counter}
Let $r \in (1,2]$ and $j\ge 1$ be fixed. Then there exist coefficients $a_{jkl}$, such that the Cauchy problem for equation \eqref{complex} with data in $\widehat{H}^r_s(\R)$ is ill-posed, if $-\frac{1}{r'}<s<s_j(r)$, in the sense that the mapping $v_0 \mapsto v$
(data upon solution) from $\widehat{H}^r_s(\R)$ into any solution space $X_T$ continuously embedded in $C([0,T],\widehat{H}^r_s(\R))$ cannot be
uniformly continuous on bounded subsets of $\widehat{H}^r_s(\R)$.
\end{prop}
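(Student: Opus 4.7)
The plan is to adapt the phase-shift argument of \cite[Section~5]{G04} (which handles the case $j=1$) to the full range of $j$. The mechanism is a wave-packet-plane-wave ansatz whose nonlinear phase depends on the data amplitude in a way that destroys uniform continuity once one goes below the threshold $s_j(r)$.

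First, I simplify by choosing $a_{j,1,0}=1$ and $a_{jkl}=0$ for all other $(k,l)$, so that \eqref{complex} reduces to
\[\pd_t v + (-1)^{j+1}\pd_x^{2j+1}v + |v|^2\pd_x^{2j-1}v=0.\]
This equation admits exact complex plane waves $v_{A,N}(t,x)=A\exp(i(Nx+\omega(A,N)t))$ with $\omega(A,N)=N^{2j+1}+(-1)^j|A|^2 N^{2j-1}$; the crucial feature is the amplitude-dependent piece $(-1)^j|A|^2 N^{2j-1}$.

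Next, I construct a family of $\widehat{H}^r_s(\R)$-bounded data matched to wave-packet truncations of these plane waves. Fix a non-trivial Schwartz $\psi$ with $\widehat\psi$ supported near $0$ and $\psi(0)\ne 0$; set $L_N:=N^{(2j-1)/2}$, the coherence length at which the dispersion time $L_N^2/\phi''(N)$ is of order one; and for $M>0$ put
\[v_{0,M,N}(x):=A_M\,\psi(x/L_N)\,e^{iNx},\qquad A_M:=ML_N^{-1/r}N^{-s}.\]
A direct Fourier computation gives $\n{v_{0,M,N}}{\widehat{H}^r_s(\R)}\sim M$ uniformly in $N$. The technical heart is to show that the corresponding nonlinear solution $v_{M,N}$ remains close, on $[0,1]$ in $L^\infty_t(\widehat{H}^r_s(\R))$, to the wave packet
\[\widetilde v_{M,N}(t,x):=A_M\,\psi\!\left(\tfrac{x+c_N t}{L_N}\right)\exp\!\bigl(iNx+iN^{2j+1}t+i(-1)^j M^2 N^{(2j-1)/r'-2s}\,t\bigr),\qquad c_N=(2j+1)N^{2j},\]
with remainder $o(1)$ as $N\to\infty$ uniformly in $M\in[1,2]$. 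This is the main obstacle. Two effects must be controlled simultaneously: the dispersive distortion of $\psi(\cdot/L_N)$ over time one, which by the choice of $L_N$ only deforms the envelope without changing its $\widehat{H}^r_s(\R)$-norm profile; and the spatial variation of $|\psi(x/L_N)|^2$ inside the nonlinearity, which under the subcritical constraint $s>-1/r'$ (the scaling-critical exponent computed in Section~1) contributes only a lower-order correction to the principal plane-wave phase. A short bootstrap in $X^r_{s,b}(1)$ built from the multilinear estimates of Section~\ref{nonlin}, or equivalently a direct Picard expansion driven by the small parameter relating $A_M$ to the envelope variation, supplies the needed control.

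Finally, I compare two such approximants at time one. For $M_1,M_2\in[1,2]$, set $\theta_k:=(-1)^j M_k^2 N^{(2j-1)/r'-2s}$; then by the norm computation above
\[\n{\widetilde v_{M_1,N}(1)-\widetilde v_{M_2,N}(1)}{\widehat{H}^r_s(\R)}\sim |M_1 e^{i\theta_1}-M_2 e^{i\theta_2}|,\]
whereas $\n{v_{0,M_1,N}-v_{0,M_2,N}}{\widehat{H}^r_s(\R)}\sim|M_1-M_2|$. For $s<s_j(r)=(2j-1)/(2r')$ the exponent $(2j-1)/r'-2s$ is strictly positive, so by choosing $|M_1-M_2|=\pi/(2 N^{(2j-1)/r'-2s})$ one arranges $\theta_1-\theta_2\equiv\pi\pmod{2\pi}$. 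Then $|M_1 e^{i\theta_1}-M_2 e^{i\theta_2}|\ge 2-o(1)$ as $N\to\infty$, while the data distance $|M_1-M_2|$ tends to zero. Together with the ansatz bound from the previous step this contradicts uniform continuity of the data-to-solution map on bounded subsets of $\widehat{H}^r_s(\R)$ and hence proves the proposition.
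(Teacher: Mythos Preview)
Your approach is genuinely different from the paper's and, as written, has a real gap in the approximation step.

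The paper does \emph{not} use wave packets or a phase-decoherence mechanism. Instead it constructs (Lemma~\ref{solution}) a two-parameter family of \emph{exact} sech-profile solutions
\[
v_{N\omega}(x,t)=\omega\,e^{i(Nx+\delta t)}\operatorname{sech}\bigl(\omega(x-ct)\bigr),\qquad c\sim N^{2j},
\]
of a suitable equation of type~\eqref{complex}; the coefficients $a_{jkl}$ are chosen precisely so that this family solves the equation exactly. Ill-posedness then comes from \emph{spatial separation}: two such solitons with nearby $N_1,N_2\sim N$ and $\omega=N^{-sr'}$ start $o(1)$-close in $\widehat H^r_s$ but, travelling at speeds $c_k\sim N_k^{2j}$, become essentially disjointly supported by time $T$. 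No approximation or stability argument is needed at all.

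Your route---localised plane waves plus amplitude-dependent phase---is the Christ--Colliander--Tao style mechanism, and it can in principle be made to work, but the justification you sketch does not. The multilinear estimates of Section~\ref{nonlin} are proved only for $s\ge s_j(r)$; below threshold they are unavailable, and working instead at a high regularity $\sigma\ge s_j(r)$ fails naively because $\|v_{0,M,N}\|_{\widehat H^r_\sigma}\sim M N^{\sigma-s}\to\infty$, so neither a direct contraction in $X^r_{s,b}(1)$ nor a uniform-lifespan bound at level $\sigma$ is immediate. Moreover, your approximate solution $\widetilde v_{M,N}$ keeps the envelope rigidly translated, yet by your own choice $L_N=N^{(2j-1)/2}$ the dispersive deformation of the envelope is $O(1)$ on $[0,1]$; the correct ansatz must include the linear evolution of the envelope, and then one still needs a quantitative stability estimate (of modified-energy type, as in \cite{CCT}) to close the argument. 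None of this is insurmountable, but it is substantial work that your sketch does not supply---whereas the paper sidesteps the whole issue by manufacturing exact solutions.
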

The proof of Proposition \ref{counter} will be carried out in Section \ref{contra}, where we adapt and generalize a counterexample
from \cite{KPV01}.

\quad

So the picture concerning the Cauchy problem for mKdV obtained in \cite{G04}, \cite{GV} is reproduced here for all the higher order equations in the hierarchy on a larger scale, increasing in $j$. For data in the classical Sobolev spaces $H^s(\R)$ the gap
between the critical regularity and the best possible local result, if uniformly continuous dependence of the solution on the data
is demanded, amounts to $\frac{2j+1}{4}$ derivatives. In each step from $j$ to $j+1$ we loose $\frac12$ derivative. Considering
data in the more general spaces $\widehat{H}^r_s(\R)$ we can close this gap almost completely. In view of both, the scaling argument and
the ill-posedness result in Proposition \ref{counter} the space $\h{1}{0}=:\widehat{L^1}(\R)$ becomes critical, and for all $j$ our positive result in Theorem \ref{mainres} gets arbitrary closed to it. Unfortunately, this critical space as well as its
various subspaces (finite measures, $\F_x^{-1}(C^0)$, $L^1$, \dots) are out of reach for our method of proof, even for small data,
and we must leave this as a challenging open question\footnote{In the case of the mKdV equation itself the existence of global
solutions for specific small data $v_0= \e_1 \delta + \e _2 p.v. \frac{1}{x}$ of critical regularity was obtained in \cite{PV},
related results for semilinear Schr\"odinger equations are in \cite{BV}.}.

\quad

In this paper the emphasis is on the improvement concerning local well-posedness achieved by considering the two parameter scale
$\widehat{H}^r_s(\R)$ of data spaces instead of restricting to the classical Sobolev spaces $H^s(\R)=\h{2}{s}$. But - to the author's knowledge - even the special case $r=2$ of Theorem \ref{mainres} is not yet in the literature, except for $j\in \{1,2\}$. The following results were previously known:
\begin{itemize}
 \item In 1979 Saut \cite{S79} proved global existence of persistent solutions of the $j$th equation in \eqref{homKdV} (and
 in \eqref{hoKdV}) with real valued data in $H^k$, $k\ge j$ integer. Using a priori estimates and parabolic regularization, his
 prove works as well for the corresponding periodic problem. The question of uniqueness was left open.
 \item In \cite{KPV94a} and \cite{KPV94b} Kenig, Ponce, and Vega showed, that there exist $s_0(j)$, such that the Cauchy problem
 for equation \eqref{homKdV} with index $j$ is locally well-posed in $H^s(\R)$ for all $s\ge s_0(j)$. Working in general in
 weighted spaces and on a larger class of polynomial nonlinearities, they explicitly point out in \cite[Theorem 1.2]{KPV94b} that
 no weights are needed, if only cubic and higher order terms appear. Their proof combines a gauge transform with smoothing
 estimates for the linearized equation.
\end{itemize}
Besides these two general results there are two papers, which are specifically concerned with the fifth order mKdV equation.
\begin{itemize}
 \item In \cite[Theorem 3.1]{L95} Linares showed that the Cauchy problem for \eqref{homKdV2} is globally well-posed in
 $H^2(\R \rightarrow \R)$\footnote{$H^s(\R \rightarrow \R)$ denotes the subspace of real valued functions in $H^s(\R)$.}. He also
 obtained a smoothing property of almost $\frac12$ derivative for the Duhamel term.
 \item An optimal local result for \eqref{homKdV2} was recently shown by Kwon \cite{K08b} using $X_{s,b}$-spaces and bilinear
 estimates. He obtained the $s\ge \frac34$-result and also showed ill-posedness in the $C^0$-uniform sense for lower
 regularities.
\end{itemize}
We observe that Kwon's local result, when combined with the conservation law at the level of $H^1$, gives global well-posedness
of the Cauchy problem for \eqref{homKdV2} in $H^s(\R\rightarrow \R)$, $s\ge 1$. For mKdV itself this was known before, see
Kenig, Ponce, and Vega \cite[p. 528]{KPV93}, and has been pushed down to lower regularities $s>\frac14$ by Colliander,
Keel, Staffilani, Takaoka, and Tao in \cite{CKSTT}. Combining the $r=2$-part of Theorem \ref{mainres} with the higher
conservation laws for the mKdV hierarchy, we obtain the following global result.

\begin{kor}
 \label{global}
Let $s\ge\frac{j+1}{2}$, if $j$ is odd, and $s\ge\frac{j}{2}$, if $j$ is even. Then the Cauchy problem for the higher order mKdV equation \eqref{homKdV} of index $j$ is globally well-posed in $H^s(\R \rightarrow \R)$.
\end{kor}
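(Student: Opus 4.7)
The plan is to combine the $r=2$ case of Theorem~\ref{mainres} with conservation of the mKdV-hierarchy Hamiltonians $\widetilde{H}_k$ and the a priori estimate \eqref{apriori}, transported to mKdV via the Miura map as recorded in the introduction. Theorem~\ref{mainres} with $r=2$ yields local solutions of \eqref{homKdV} in $H^s(\R)$ for every $s \ge s_j(2) = \frac{2j-1}{4}$, with existence time $\delta$ depending only on $\|v_0\|_{H^{(2j-1)/4}}$; real-valued data give real-valued solutions since the coefficients are real. The strategy is then the standard one: find an integer level at which the local theory applies and at which a conserved Hamiltonian controls the corresponding Sobolev norm, then iterate.

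The first observation is that the smallest integer $k_j$ with $k_j \ge \frac{2j-1}{4}$ is $\frac{j+1}{2}$ for odd $j$ and $\frac{j}{2}$ for even $j$ (write $j=2m+1$ or $j=2m$ and compute), exactly matching the thresholds in the statement. The core step is a global a priori $H^{k_j}$ bound. Both $\widetilde{H}_0(v)=\|v\|_{L^2}^2$ and $\widetilde{H}_{k_j}(v)=H_{k_j-1}(\pd_xv+v^2)$ are conserved along real-valued solutions of \eqref{homKdV}, via the Poisson-bracket involution from Section~1. Since the leading quadratic term of $P_{k_j-1}(u)$ is $c(\pd_x^{k_j-1}u)^2$ with $c\neq 0$, the substitution $u=\pd_xv+v^2$ produces $c(\pd_x^{k_j}v)^2$ as the leading term of $\widetilde{P}_{k_j}(v)$, and the remaining lower-derivative monomials are absorbable in $\|v\|_{L^2}$ by Gagliardo-Nirenberg and Young --- this is precisely the Miura-transported version of \eqref{apriori} announced in the introduction. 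Combined with conservation, this gives
\[ \sup_t \|v(t)\|_{H^{k_j}} \le C(\|v_0\|_{H^{k_j}}) < \infty. \]

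Because $k_j \ge s_j(2)$, the uniform $H^{k_j}$-bound pins the local existence time from Theorem~\ref{mainres} away from zero, so iteration produces a global $H^{k_j}$-solution. For general $s \ge k_j$, including non-integer $s$, persistence of regularity closes the argument: Theorem~\ref{mainres} itself provides local $H^s$-solutions on the same intervals $\delta$ (whose length is controlled by the globally bounded $H^{s_j(2)}$-norm), and iterated local existence extends the solution globally in $H^s$. The main --- indeed only --- nontrivial input is the Miura-transported form of \eqref{apriori}; once one verifies that the top-order term in $\widetilde{P}_{k_j}$ is indeed $c(\pd_x^{k_j}v)^2$ with $c\neq 0$, the remainder of that step reduces to the routine interpolation-plus-Young calculation alluded to in the introduction, and no further calculation is needed beyond matching $k_j$ to the local existence threshold $s_j(2)=\frac{2j-1}{4}$.
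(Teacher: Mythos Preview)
Your proof is correct and follows exactly the approach sketched in the paper: combine the $r=2$ case of Theorem~\ref{mainres} (local well-posedness for $s\ge s_j(2)=\frac{2j-1}{4}$, with lifespan controlled by the $H^{s_j(2)}$-norm) with conservation of the mKdV Hamiltonians $\widetilde{H}_k$ and the Miura-transported a priori estimate~\eqref{apriori}, then iterate at the smallest integer level $k_j\ge\frac{2j-1}{4}$. Your identification of $k_j$ with $\frac{j+1}{2}$ (odd $j$) and $\frac{j}{2}$ (even $j$) is the only computation needed beyond what the introduction already records.
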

Furthermore our estimates imply, that the equations \eqref{homexp1}, \eqref{homexp2} are much better behaved, if no cubic
terms appear. In this case we can use Theorem \ref{quintic+Hs} below, which gives the following.
\begin{prop}
 \label{nocubic}
If in \eqref{homexp2} all the coefficients $\widetilde{c}_{j,1,l}$ vanish, then the Cauchy problem for \eqref{homexp1},
\eqref{homexp2} is locally well-posed in $H^s(\R)$ for $s>-\frac12$.
\end{prop}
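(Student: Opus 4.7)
The strategy is to realize this proposition as a direct corollary of the abstract local well-posedness framework of \cite[Theorem 2.3]{G04} together with the multilinear estimates of Theorem \ref{quintic+Hs} (stated below). Since $H^s(\R)=\h{2}{s}$, this is the $r=2$ case of the Fourier restriction machinery used throughout the paper, with phase function $\phi(\xi)=\xi^{2j+1}$ and solution space $X_{s,b}(\delta)$.

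Under the hypothesis $\widetilde{c}_{j,1,l}=0$ for all $l$, the nonlinearity in \eqref{homexp1} collapses to
$$\sum_{k=2}^{j}\widetilde{N}_{jk}(v)\;=\;\sum_{k=2}^{j}\sum_{|l|=2(j-k)+1}\widetilde{c}_{j,k,l}\,\pd_x^{l_0}\prod_{i=1}^{2k+1}\pd_x^{l_i}v,$$
i.e.\ a finite sum of $(2k+1)$-linear expressions of order $\geq 5$. By \cite[Theorem 2.3]{G04}, local well-posedness for the Duhamel formulation in $H^s(\R)$ reduces to multilinear estimates of the schematic form
$$\Bigl\|\pd_x^{l_0}\prod_{i=1}^{2k+1}\pd_x^{l_i}v_i\Bigr\|_{X_{s,b-1+\e}}\;\ls\;\prod_{i=1}^{2k+1}\|v_i\|_{X_{s,b}}$$
for some $b>\tfrac12$ and small $\e>0$, valid for each $k\in\{2,\dots,j\}$ and each admissible multi-index $l$ with $|l|=2(j-k)+1$. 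These are precisely the estimates furnished by Theorem \ref{quintic+Hs} in the range $s>-\tfrac12$. Once they are in hand, the standard Picard iteration on a small ball of $X_{s,b}(\delta)$ delivers a unique local solution, with $\delta=\delta(\|v_0\|_{H^s})$ extracted from the usual $\delta^{\e}$-gain of the time restriction norm; persistence and real-analytic dependence on the data then follow as for Theorem \ref{mainres} via the implicit function theorem.

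The entire substantive work is thus packaged into Theorem \ref{quintic+Hs}. The conceptual point of the present proposition is that the regularity threshold of Theorem \ref{mainres} at $r=2$, namely $s\ge\tfrac{2j-1}{4}$, is forced by the cubic ($k=1$) interactions, where one high-frequency factor pairs against two low-frequency ones and exhausts the dispersive gain of the linear flow. With the cubic terms absent, the $(2k+1)$-linear estimates for $k\ge 2$ enjoy enough additional averaging across $2k+1\ge 5$ factors to absorb the $2(j-k)+1$ derivatives present in $\widetilde{N}_{jk}$ and push $s$ all the way down to the scaling-critical $-\tfrac12$. The main obstacle — really the only one — is to carry out those multilinear estimates in the endpoint regime, which is left to the proof of Theorem \ref{quintic+Hs} itself.
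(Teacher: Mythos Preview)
Your proposal is correct and coincides with the paper's own argument: the proposition is stated as an immediate consequence of Theorem \ref{quintic+Hs} together with the general local well-posedness scheme of \cite[Theorem 2.3]{G04}, exactly as you have written. The paper gives no further details beyond this reduction.
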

We remark that global well-posedness in $H^s(\R)$, $s\ge0$, follows for the equations in Proposition \ref{nocubic}, if the
$L^2$-norm is preserved in time, e. g., if $v$ is real valued and $v\sum_{k=2}^j\widetilde{N}_{jk}(v)$ is an $x$-derivative.
A special example is
$$\pd_tv+(-1)^{j+1}\pd_x^{2j+1}v + v^{2j}\pd_xv=0.$$

\subsection{On the KdV hierarchy}

In view of the Miura transform we should expect the Cauchy problem for the higher order KdV equations \eqref{hoKdV} to be locally
well-posed for $s \ge s_j(2)-1=\frac{2j-5}{4}$. For $j=1$ this is indeed known to be true, cf. \cite{KPV96a}, for the endpoint see
\cite{CCT}. But for all the higher equations in the KdV hierarchy this heuristic is misleading. As was shown by Pilod in \cite{Pi08}, who used an argument of Molinet, Saut, and Tzvetkov \cite{MST} developed in the Benjamin-Ono context, the Cauchy
problem for the higher order KdV equations is ill-posed in any $H^s(\R)$-space in the sense that the flow map cannot be twice
continuously differentiable. (Strictly speaking, Pilod considers the special case of equations having only quadratic nonlinearities. But since the cubic and higher terms in \eqref{hoexp1} and \eqref{hoexp2} are well behaved, no cancellations occur, and his proof
applies as well to these more general equations.) This implies that for $H^s(\R)$-data - even for arbitrarily high regularities - no
local well-posedness result can be obtained by the contraction mapping principle. In this situation there are two alternatives. The
first is to lower the regularity assumptions on the flow map, so that, if merely continuous dependence of the solution on the data is demanded, energy methods can be applied successfully. For the fifth order equation \eqref{hoKdV2} this was carried out already
in 1993 by Ponce, see \cite{P93}. Compared with the result of Saut mentioned above, Ponce's argument gives also uniqueness and
continuous dependence. It is applicable for data in $H^s(\R)$, if $s>\frac72$, for $s\ge 4$ he obtains global well-posedness.
Recently, Kwon \cite{K08a} has used a refinement of the energy method due to Koch and Tzvetkov \cite{KT} to improve Ponce's result.
His lower threshold for local well-posedness is $s>\frac52$, and for $s \ge 3$ he gets a global result. The second alternative is
to leave the $H^s(\R)$-scale and to consider data in different function spaces, for example with weighted (in physical space) norms,
as it was carried out by Kenig, Ponce, and Vega in \cite{KPV94a}, \cite{KPV94b}, and in the sequel by Pilod \cite{Pi08}, who
considers small data in the intersection of $H^{2j+\frac14+\e}(\R)$ with a weighted Besov space. The latter results do rely on the
contraction mapping principle, thus yielding a smooth flow map defined near the origin of the data space.

\quad

Considering now data in the spaces $\h{r}{s}$, we first observe that the argument of Molinet-Saut-Tzvetkov and Pilod, respectively,
can be easily modified to show $C^2$-ill-posedness of the Cauchy problem for the $j$th equation in the KdV hierarchy for all
$s\in \R$, if $r>\frac{2j}{2j-1}$\footnote{In fact, if in the proof of \cite[Theorem 3]{Pi08} we completely replace 
$\|\quad\|_{H^s(\R)}$ by $\|\quad\|_{\widehat{H}^r_s(\R)}$, choose precisely the same $\alpha=N^{-2j-}$, and specialize to
$k=2j-1$, then the normalization condition (110) leads to an $\alpha^{-\frac{1}{r'}}$ instead of $\alpha^{-\frac{1}{2}}$ in
(107), (108), and hence to an $\alpha^{-\frac{2}{r'}}$ instead of $\frac{1}{\alpha}$ in the last line of (111). Consequently,
the lower bound on the right of (114) becomes $\frac{N^s}{N^s}\alpha^{-\frac{2}{r'}}N^{2j-1}\alpha^{\frac{1}{r'}}\alpha =
N^{2j-1}\alpha^{\frac{1}{r}}=N^{2j-1-\frac{2j}{r}-}$, which tends to infinity, if $r>\frac{2j}{2j-1}$, thus contradicting an
estimate - (106) in \cite{Pi08} - that would hold true in the case of $C^2$-regularity of the flow map.}. But for $r$ in the
small and shrinking interval $(1,\frac{2j}{2j-1}]$ it is in fact possible to obtain local well-posedness of the Cauchy problem
for \eqref{hoexp1}, \eqref{hoexp2} with data in $\h{r}{s}$, if $s$ is sufficiently large. To prove and quantify this, we shall
use the function spaces $X^{r,p}_{s,b}$ with norm
$$\|f\|_{X^{r,p}_{s,b}}:= \left(\int d \xi \left( d \tau \langle \xi \rangle^{sp'}\langle \tau - \phi(\xi)\rangle^{bp'} |\F{f}(\xi , \tau)|^{p'} \right)^{\frac{r'}{p'}} \right)^{\frac{1}{r'}},$$
where $\phi$ is the phase function of the linearized problem, in our case $\phi(\xi)=\xi^{2j+1}$ as above. The additional parameter
$p$ here will allow us to get a slightly better balance among the various cases in the subsequent estimates, which optimizes the
lower threshold for the Sobolev regularity. (Unfortunately, this beneficial effect becomes negligible for higher values of $j$,
since the allowed range for $p$ is strongly limited.) The time restriction norm spaces $X^{r,p}_{s,b}(\delta)$ are defined in the
same manner as for the $p=r$-variant used before.
\begin{theorem}
 \label{hoKdVlocal}
Let $j \ge 2$, $1< r \le p = \frac{2j}{2j-1}$, $s > j - \frac32 - \frac{1}{2j} +\frac{2j-1}{2r'}$, and $u_0 \in \widehat{H}^r_s(\R)$. Then there exist $\delta = \delta (\n{u_0}{\widehat{H}^{r}_{s}(\R)}) > 0$ and a unique solution $u \in X^{r,p}_{s,b}(\delta)$ of \eqref{hoexp1}, \eqref{hoexp2} with $u(x,0)=u_0(x)$. This solution is persistent
and the flow map $u_0 \mapsto u$, $\widehat{H}^r_s(\R) \rightarrow  X^{r,p}_{s,b}(\delta)$ is locally Lipschitz continuous.
\end{theorem}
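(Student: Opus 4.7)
The plan is to verify the hypotheses of the abstract local well-posedness scheme of \cite[Theorem 2.3]{G04} in the function spaces $X^{r,p}_{s,b}(\delta)$. This reduces the theorem to two ingredients. First, the free propagator $e^{-t\pd_x^{2j+1}}$ maps $\h{r}{s}$ continuously into $X^{r,p}_{s,b}$ and the associated Duhamel operator gains a positive power of $\delta$; both are routine once $b$ is chosen just above $1/p'$. Second -- and this is the content -- one must prove the multilinear estimates
\begin{equation*}
\bigl\|\pd_x^{l_0}\prod_{i=1}^{k}\pd_x^{l_i}u_i\bigr\|_{X^{r,p}_{s,b-1+\e}} \ls \prod_{i=1}^{k}\|u_i\|_{X^{r,p}_{s,b}},\quad |l|=2(j-k)+3,\ l_0\ge 1,
\end{equation*}
for each $k=2,\dots,j+1$.

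I would attack each such estimate by duality, rewriting it as an $L^{r'}_{\xi}L^{p'}_{\tau}$-type bound on a $(k+1)$-linear convolution integral over the constraints $\xi=\sum\xi_i$, $\tau=\sum\tau_i$. The decisive algebraic identity
\begin{equation*}
(\tau-\xi^{2j+1})-\sum_{i=1}^{k}(\tau_i-\xi_i^{2j+1}) \;=\; \xi^{2j+1}-\sum_{i=1}^{k}\xi_i^{2j+1}\;=:\;R_k(\xi_1,\dots,\xi_k)
\end{equation*}
forces at least one modulation weight to be of size $|R_k|$, which is the source of smoothing. After dyadic localization in frequencies and modulations, the task reduces to placing the outer derivatives using the lower bound on $|R_k|$ and closing the resulting scalar convolution inequality by Hausdorff--Young and Young-type estimates adapted to mixed norms. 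The auxiliary parameter $p$ is used to balance the Hausdorff--Young endpoint in $\xi$ against the modulation endpoint in $\tau$, and it is in this optimization that the choice $p=\frac{2j}{2j-1}$ shows itself to be sharp.

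The main obstacle is the quadratic case $k=2$, which is exactly the term driving the $C^2$-ill-posedness when $r>\frac{2j}{2j-1}$. Here
\begin{equation*}
R_2(\xi_1,\xi_2)=(2j+1)\xi_1\xi_2(\xi_1+\xi_2)\,Q_j(\xi_1,\xi_2),
\end{equation*}
with $Q_j$ symmetric of degree $2(j-1)$ and strictly positive off the axes. The decisive sub-region is the high$\times$high$\to$low interaction $|\xi|\ll|\xi_1|\sim|\xi_2|=:N$, where $|R_2|\sim|\xi|N^{2j}$. Extracting this from a modulation weight of net exponent $1-b-\e\approx 1/p$ supplies smoothing of order $|\xi|^{1/p}N^{2j/p}$; balancing this gain against the $2j-1$ outer derivatives, the two input weights $\lb\xi_i\rb^{s}$, and the low-frequency output weight $\lb\xi\rb^{-s}$ produces precisely the threshold $s>j-\frac{3}{2}-\frac{1}{2j}+\frac{2j-1}{2r'}$, the $-\frac{1}{2j}$ saving reflecting the extra $\xi$-factor in $R_2$. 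The cubic and higher terms ($k\ge 3$) carry at most $2(j-k)+3\le 2j-3$ outer derivatives and are considerably softer, closing by mKdV-type bounds analogous to those underlying Theorem \ref{mainres}; they impose no further restriction on $s$. Once these multilinear estimates are in place, the contraction mapping principle delivers the unique solution in $X^{r,p}_{s,b}(\delta)$ together with persistence and real-analytic (hence Lipschitz) dependence on the initial data.
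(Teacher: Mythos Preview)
Your overall scaffolding---reduce to the abstract contraction scheme of \cite{G04} and then prove the multilinear estimates in $X^{r,p}_{s,b}$---matches the paper. But the proposed proof of the quadratic estimate has a genuine gap, and the threshold you claim to recover does not actually follow from the argument you sketch.

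The paper does \emph{not} close the quadratic bound using only the resonance identity plus Hausdorff--Young/Young. The primary tool is the bilinear smoothing estimate \eqref{mixedbill} (coming from Lemma~\ref{Bill} via Corollary~\ref{xBill}), which gains $m_j(\xi_1,\xi_2)^{1/p}\sim(|\xi_1+\xi_2||\xi_1-\xi_2|\xi_{max}^{2j-2})^{1/p}$ derivatives on the product $u_1u_2$, together with the weaker variant Lemma~\ref{weaksmooth} for the region where $|\xi_1-\xi_2|\ll|\xi_1|$ and \eqref{mixedbill} degenerates. The resonance relation \eqref{rr2} is used \emph{on top of} these, not instead of them. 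If you try to run a pure resonance argument---extracting only $|R_2|^{-b'}\approx |R_2|^{1/p'}$ and then closing by Young---you cannot absorb all $2j-1$ derivatives at the stated $s$; a quick count in the high--high regions shows you would need roughly $s\gtrsim j-1$, far above $j-\tfrac32-\tfrac{1}{2j}+\tfrac{2j-1}{2r'}$.

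Relatedly, you misidentify the decisive sub-region. In the paper the sharp condition \eqref{cond2} comes from the \emph{high--high--high} interaction $|\xi|\sim|\xi_1|\sim|\xi_2|$ with $|\xi_1-\xi_2|$ possibly small (Subsubcase~1.1.2), precisely because the bilinear symbol $m_j$ degenerates there and one must fall back on Lemma~\ref{weaksmooth}, whose gain is only $|\xi|^{(2j-1)/(2r)-}$. The high--high--low region (your candidate) is in fact easier and yields only the milder condition \eqref{cond1}. The saving $-\tfrac{1}{2j}$ is not ``the extra $\xi$-factor in $R_2$'' but simply $(2j+1)/p'=1+\tfrac{1}{2j}$ at $p=\tfrac{2j}{2j-1}$ inserted into \eqref{cond2}.

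Two further corrections. First, the constraint $p\le\tfrac{2j}{2j-1}$ is not an optimization; it is forced by the low$\times$high region $|\xi_1|\le1\le|\xi_2|$, where \eqref{mixedbill} must by itself control all $2j-1$ derivatives (see the opening paragraph of the proof of Theorem~\ref{quadratic}). Second, the embedding $X^{r,p}_{s,b}\hookrightarrow C(\R,\h{r}{s})$ and the linear estimates require $b>\tfrac{1}{p}$, not $b>\tfrac{1}{p'}$; consequently the modulation gain from $\lb\sigma_0\rb^{b'}$ is $|R_2|^{-b'}\approx|R_2|^{1/p'}$, not $|R_2|^{1/p}$.
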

Theorem \ref{hoKdVlocal} follows from the estimates in Section \ref{moreestimates} and the general theory in \cite[Section 2]{G04},
which can be easily adapted to the more general $X_{s,b}^{r,p}$-setting. The flow map here is again real analytic. For large $j$
the lower bound on $s$ becomes quite a desaster - in each step from $j$ to $j+1$ we loose more than a whole derivative, and not
only the scaling prediction but also the $s_j(2)-1=\frac{2j-5}{4}$ suggested by the Miura map comes out of reach rapidly. But
for $j=2$ we can allow regularities corresponding to $H^{-\frac14+\e}(\R)$, which are by far lower than the above mentioned
earlier results on this equation. So the question comes up naturally, if our arguments are sufficient to obtain an improvement
of the $H^{-\frac34}(\R)$-result for $j=1$, that is for KdV itself. The answer is yes, but in contrast to the mKdV equation we
stay away substantially from the critical regularity.

\begin{prop}
 \label{KdVlocal}
The Cauchy problem for the KdV equation with data $u_0 \in \widehat{H}^r_s(\R)$ is locally well-posed for $r\in (1,2)$ and
$s>\max{(-\frac12-\frac{1}{2r'},-\frac14-\frac{11}{8r'})}$. The local solutions belong to and are unique in a space 
$X_{s,b}^{r,p}(\delta)\subset C([0,\delta], \widehat{H}^r_s(\R))$ with phase function $\phi (\xi)=\xi^3$, $b=\frac{1}{p}+$ and
$$\frac{1}{p'} \quad \begin{cases} = \quad \frac14+\frac{5}{8r'}, \quad & \mbox{if} \quad 1<r \le \frac75 \\ \in \quad [\frac13(1+\frac{1}{r'}), \min{(\frac12,\frac{3}{2r'})}]
, \quad & \mbox{if} \quad \frac75 \le r \le 2\end{cases}
.$$
The positive lifespan $\delta$ depends on $\|u_0\|_{\widehat{H}^r_s(\R)}$, and the flow map is real analytic.
\end{prop}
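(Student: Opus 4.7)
The plan is to reduce Proposition \ref{KdVlocal} to a bilinear estimate and to exploit the additional flexibility provided by the two-parameter family $X^{r,p}_{s,b}$. Via the general Picard-iteration scheme adapted from \cite[Section 2]{G04} to the $X^{r,p}_{s,b}$-setting, local well-posedness will follow once we establish
\[
\|\pd_x(uv)\|_{X^{r,p}_{s,b-1}} \lesssim \|u\|_{X^{r,p}_{s,b}}\|v\|_{X^{r,p}_{s,b}}
\]
for phase $\phi(\xi)=\xi^3$, with $b=\frac1p+$ chosen small enough that the dual exponent $1-b$ stays below $1/p'$. The auxiliary embedding $X^{r,p}_{s,b}\hookrightarrow C([0,\delta],\widehat{H}^r_s(\R))$ is ensured by the condition $b>1/p$, analogous to the $r=p=2$ case.

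After taking Fourier transforms and dualizing, the bilinear estimate reduces to a trilinear integral inequality on the hyperplane $\xi=\xi_1+\xi_2$, $\tau=\tau_1+\tau_2$, with weights $\lb\xi\rb^s|\xi|\lb\xi_1\rb^{-s}\lb\xi_2\rb^{-s}\lb\tau-\xi^3\rb^{b-1}\lb\tau_1-\xi_1^3\rb^{-b}\lb\tau_2-\xi_2^3\rb^{-b}$, tested against functions in $L^r_\xi L^p_\tau$ and $L^{r'}_\xi L^{p'}_\tau$. The key input is the KdV resonance identity $\xi^3-\xi_1^3-\xi_2^3 = 3\xi\xi_1\xi_2$, which guarantees
\[
\max\bigl(\lb\tau-\xi^3\rb,\lb\tau_1-\xi_1^3\rb,\lb\tau_2-\xi_2^3\rb\bigr)\gtrsim |\xi\xi_1\xi_2|,
\]
and hence allows redistribution of derivatives between the frequency and modulation weights. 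I would then split into the standard cases: high-high-to-low ($|\xi|\ll|\xi_1|\sim|\xi_2|$), high-low-to-high ($|\xi|\sim|\xi_1|\gg|\xi_2|$ or symmetric), and the comparable case. In each case, Hölder (first in $\tau$, then in $\xi$) together with mixed-norm convolution estimates (Young, Hardy--Littlewood--Sobolev) reduces matters to pointwise weight bounds; Kato smoothing of the free KdV flow in its $\widehat{H}^r_s$-variant, established by the author in earlier work, provides the necessary $L^{r'}_\xi L^{p'}_\tau$ bounds.

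The role of the parameter $p$ is to redistribute integration power between the spatial and temporal frequency directions so as to optimize the balance across the different interaction regions. Tracking the conditions yielded by each case shows that one branch produces the bound $s>-\tfrac12-\tfrac{1}{2r'}$ (coming from the high-high-to-low interaction, whose limited resonance gain controls the exponent regardless of $p$), while a second branch depending on $p$ yields $s>-\tfrac14-\tfrac{11}{8r'}$. Minimizing this second expression over admissible $p$ leads precisely to the piecewise definition of $1/p'$ in the statement: for $1<r\le 7/5$ the minimization is rigid and forces $1/p'=\tfrac14+\tfrac{5}{8r'}$, whereas for $7/5\le r\le 2$ the second branch is no longer binding, and any $p$ in the stated interval (whose lower endpoint reflects the embedding $b>1/p$, and whose upper endpoint $1/p'\le \min(1/2,3/(2r'))$ ensures the basic integrability) works.

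The main obstacle will be the high-high-to-low case at the endpoint $s=-\frac12-\frac{1}{2r'}$, where the resonance gain $|\xi\xi_1\xi_2|^{1-b}$ has to compensate for the derivative loss $|\xi|\lb\xi_1\rb^{-s}\lb\xi_2\rb^{-s}$ with only a single modulation weight absorbing the factor $|\xi|^{1/2}$; one cannot afford to be wasteful in either the $L^p_\tau$-exponent or the frequency convolution. Here the scale $p=\frac{2j}{2j-1}=2$ collapses at the KdV endpoint, so instead one must allow $p<2$, and the delicate argument is to show that the mixed-norm $L^{r'}_\xi L^{p'}_\tau$-version of the Strichartz estimate is strong enough to close the estimate off the $H^s$-line. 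Real analyticity of the flow map is automatic from the multilinear nature of the Picard iteration and the implicit function theorem.
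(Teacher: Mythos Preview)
Your outline has the right skeleton---Picard iteration in $X^{r,p}_{s,b}$, the resonance identity $\xi^3-\xi_1^3-\xi_2^3=3\xi\xi_1\xi_2$, and a frequency case split---but it misses the decisive structural point and one concrete new estimate, so as written it would not close.

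First, the case split by frequency interaction alone is not enough; the paper's proof splits primarily by which modulation $\sigma_i=\tau_i-\xi_i^3$ is largest (with $\sigma_0=\tau-\xi^3$). When $|\sigma_0|$ dominates, the resonance gain sits directly on the output weight $\lb\sigma_0\rb^{b'}$ and you can combine it with the bilinear smoothing operator $M_{p,1}$ (respectively the weak variant of Lemma~\ref{weaksmooth} when $|\xi_1-\xi_2|\ll|\xi_1|$); this yields conditions \eqref{cond1} $s>-\tfrac12-\tfrac{1}{2r'}$ and \eqref{cond2} $s>\tfrac12-\tfrac{3}{p'}+\tfrac{1}{2r'}$. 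But when $|\sigma_1|$ (or $|\sigma_2|$) dominates, the resonance gain lands on an \emph{input} factor, and you still need to exploit the remaining $\lb\sigma_0\rb^{b'}$ on the left. The crude approach used for $j\ge2$ (resonance plus Sobolev) is too lossy here; the paper instead proves a new dual-type estimate, Lemma~\ref{lastbutone}, of the form
\[
\|M^*_{r',1}(u_1,u_2)\|_{X^{r,p}_{0,b'}}\lesssim \|J^{s_1}u_1\|_{\widehat{L^r_x}(\widehat{L^p_t})}\|u_2\|_{X^{r,p}_{s_2,b}}
\]
(and a companion with weight $D_x^{1/2r'}$ for the case $|\xi_1-\xi_2|\ll|\xi_1|$), obtained by H\"older in $\tau$ followed by a change of variables $h=\xi_1(\xi-\tfrac{\xi_1}{2})^2$ inside $\lb\sigma_{res}\rb^{b'r'}$. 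Your reference to ``Kato smoothing in its $\widehat{H}^r_s$-variant'' does not supply this; nothing of this shape appears in the earlier sections, and without it the subcase $|\sigma_1|$ largest, $|\xi_1|\sim|\xi_2|\gg|\xi|$ produces a condition strictly worse than \eqref{cond1}.

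Second, your account of the optimization in $p$ is off. The threshold $s>-\tfrac14-\tfrac{11}{8r'}$ is not a single $p$-dependent branch: it arises by \emph{balancing} two $p$-dependent conditions, namely \eqref{cond2} (from $\sigma_0$ largest, comparable frequencies, via Lemma~\ref{weaksmooth}) which is decreasing in $1/p'$, and \eqref{cond14} $s>\tfrac{1}{p'}-\tfrac{2}{r'}-\tfrac12$ (from $\sigma_1$ largest, high-high-to-low, via Lemma~\ref{lastbutone}) which is increasing in $1/p'$. Equating them forces $\tfrac{1}{p'}=\tfrac14+\tfrac{5}{8r'}$. For $r\ge\tfrac75$ the $p$-independent condition \eqref{cond1} takes over, and the interval $[\tfrac13(1+\tfrac{1}{r'}),\min(\tfrac12,\tfrac{3}{2r'})]$ is exactly the range where both \eqref{cond2} and \eqref{cond14} stay below \eqref{cond1}; it has nothing to do with the embedding $b>1/p$ as you suggest.
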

The estimates necessary for Proposition \ref{KdVlocal} - as far as they are not already contained in the proof of Theorem
\ref{quadratic} - will be shown in the last section. We point out that in the whole interval $(1,2)$, where $r$ is admitted,
our lower bound on $s$ is strictly below the line $s=-\frac14-\frac{1}{r'}$, which corresponds via scaling to the $H^{-\frac34}(\R)$-result of \cite{CCT}.

\section{Estimates for free solutions of the generalized Airy equation}
 \label{Airy}

Throughout this section we consider solutions $u(t)=e^{(-1)^jt\partial_x^{2j+1}}u_0$, $v(t)=e^{(-1)^jt\partial_x^{2j+1}}v_0$,
and $w(t)=e^{(-1)^jt\partial_x^{2j+1}}w_0$ of the Airy type equation 
$$\partial_t u + (-1)^{j+1}\partial_x^{2j+1} u = 0, \quad j \ge 1,$$
with Cauchy data $u_0$, $v_0$ and $w_0$, respectively. Their Fourier transforms $\F _x u_0$, $\F _x v_0$, 
and $\F _x w_0$ are assumed to be nonnegative. Certain bi- and trilinear 
expressions involving these solutions will be estimated in the spaces $\widehat{L_x^p}(\widehat{L_t^q})$ 
and $\widehat{L^r_{xt}} := \widehat{L_x^r}(\widehat{L_t^r})$, where
$$\n{f}{\widehat{L_x^q}(\widehat{L_t^p})}:= \left( \int \Big{(} \int |\widehat{f}(\xi, \tau)|^{p'} d \tau \Big{)}^{\frac{q'}{p'}}d\xi \right)^{\frac{1}{q'}}, \,\,\,\,\,\frac{1}{q}+\frac{1}{q'}=\frac{1}{p}+\frac{1}{p'}=1.$$
A slight modification of the argument leading to the bilinear estimate will give us a linear estimate in the more
common spaces $L_t^p(L_x^q)$ with norm
$$\n{f}{L_t^p(L_x^q)}:= \left( \int \Big{(} \int |f(x, t)|^{q} d x \Big{)}^{\frac{p}{q}}dt \right)^{\frac{1}{p}}.$$

\subsection{Bilinear and linear estimates}

In order to state and prove the bilinear estimate, we introduce the bilinear pseudodifferential operator $M_{r,j}$,
which we define in terms of Fourier transforms by
$$\F_x M_{r,j}(f,g)(\xi)= \int_* d\xi_1 m_j(\xi_1,\xi_2)^{\frac{1}{r}}\F_x f(\xi_1)\F_x g(\xi_2),$$
where $\int_*$ is shorthand for $\int_{\xi_1+\xi_2=\xi}$ and the multiplier $m_j$ is given by
$m_j(\xi_1,\xi_2)=|\xi_1+\xi_2||\xi_1-\xi_2|(\xi_1^{2(j-1)}+\xi_2^{2(j-1)})$.

\begin{lemma}\label{Bill} 
 Let $1 \le q \le r_{1,2} \le p \le \infty$ and $\frac{1}{p}+\frac{1}{q}=\frac{1}{r_1}+\frac{1}{r_2}$. Then we have
\begin{equation}
 \label{Bronstein}
\|\F_xM_{p,j}(u,v)(\xi,\cdot)\|_{\widehat{L^p_t}}\ls(|\F_x{u_0}|^{p'}*|\F_x{v_0}|^{p'}(\xi))^{\frac{1}{p'}}
\end{equation}
and
\begin{equation}
 \label{Semendjajew}
\n{M_{p,j}(u,v)}{\widehat{L_x^q}(\widehat{L^p_t})}\ls \n{u_0}{\widehat{L^{r_1}_x}}\n{v_0}{\widehat{L^{r_{2}}_x}}.
\end{equation}
\end{lemma}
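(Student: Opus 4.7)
The plan is to prove (\ref{Bronstein}) by a direct computation of the space--time Fourier transform of $M_{p,j}(u,v)$, combined with the change-of-variables formula on the constraint surface $\xi_1+\xi_2=\xi$, and to deduce (\ref{Semendjajew}) from (\ref{Bronstein}) by Young's convolution inequality in $\xi$.

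For (\ref{Bronstein}), since $\F_x u(\cdot,t)(\xi)=e^{it\xi^{2j+1}}\F_x u_0(\xi)$ and similarly for $v$, a Fourier transform in $t$ turns the time factor into a $\delta$-distribution:
$$\F M_{p,j}(u,v)(\xi,\tau)\;=\;c\int_* m_j(\xi_1,\xi_2)^{1/p}\,\F_x u_0(\xi_1)\,\F_x v_0(\xi_2)\,\delta(\tau-\xi_1^{2j+1}-\xi_2^{2j+1})\,d\xi_1.$$
The crucial algebraic fact is that, with $\xi=\xi_1+\xi_2$ fixed, the derivative of the phase $\psi_\xi(\xi_1):=\xi_1^{2j+1}+\xi_2^{2j+1}$ equals
$$\psi_\xi'(\xi_1)\;=\;(2j+1)(\xi_1^{2j}-\xi_2^{2j})\;=\;(2j+1)(\xi_1+\xi_2)(\xi_1-\xi_2)\sum_{k=0}^{j-1}\xi_1^{2k}\xi_2^{2(j-1-k)}.$$
The rightmost sum, being a sum of nonnegative terms that contains both $\xi_1^{2(j-1)}$ and $\xi_2^{2(j-1)}$, is comparable (with constants depending only on $j$) to $\xi_1^{2(j-1)}+\xi_2^{2(j-1)}$, whence $|\psi_\xi'(\xi_1)|\approx m_j(\xi_1,\xi_2)$.

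The next step is to split $\R$ into the at most $2j$ intervals on which $\psi_\xi$ is monotone, apply $\delta(\tau-\psi_\xi(\xi_1))=\sum_k|\psi_\xi'(\xi_1^{(k)})|^{-1}\delta_{\xi_1^{(k)}}$, and bound the $p'$-th power of the sum over the (uniformly boundedly many) preimages by $|\sum_{k=1}^n a_k|^{p'}\le n^{p'-1}\sum_k|a_k|^{p'}$. After integrating $|\F M_{p,j}(u,v)(\xi,\cdot)|^{p'}$ in $\tau$ over each monotonic branch, pushing the measure back to $\xi_1$ via $d\tau=|\psi_\xi'|\,d\xi_1\approx m_j\,d\xi_1$, the powers of $m_j$ collapse: $m_j^{p'/p}\cdot m_j^{-p'}\cdot m_j=m_j^{0}=1$. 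What remains is exactly $(|\F_x u_0|^{p'}*|\F_x v_0|^{p'})(\xi)$, and a $p'$-th root yields (\ref{Bronstein}). The assumption $\F_x u_0,\F_x v_0\ge 0$ is what lets us identify $|\F_x u_0|^{p'}$ with $(\F_x u_0)^{p'}$ throughout.

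For (\ref{Semendjajew}), taking the $L^{q'}_\xi$-norm of (\ref{Bronstein}) gives
$$\|M_{p,j}(u,v)\|_{\widehat{L^q_x}(\widehat{L^p_t})}^{q'}\;\lesssim\;\||\F_x u_0|^{p'}*|\F_x v_0|^{p'}\|_{L^{q'/p'}_\xi}^{q'/p'},$$
and Young's convolution inequality applies with exponents $a=r_1'/p'$, $b=r_2'/p'$, $c=q'/p'$: the constraint $q\le r_{1,2}\le p$ ensures $a,b,c\in[1,\infty]$, and $1+1/c=1/a+1/b$ is precisely the hypothesis $1/p+1/q=1/r_1+1/r_2$. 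This produces $\|u_0\|_{\widehat{L^{r_1}_x}}^{p'}\|v_0\|_{\widehat{L^{r_2}_x}}^{p'}$ on the right, so a $q'$-th root closes the argument. The delicate point I anticipate is the bookkeeping of the change of variables in (\ref{Bronstein}); happily the degenerate set $\{\psi_\xi'=0\}$ coincides with $\{m_j=0\}$, so the weight $m_j^{1/p}$ absorbs the would-be singularity and no regularization is needed.
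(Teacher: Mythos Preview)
Your proof is correct and follows essentially the same approach as the paper's. The only cosmetic difference is that the paper passes to the symmetric variable $x=\xi_1-\tfrac{\xi}{2}$, which makes $g(x)=(\tfrac{\xi}{2}+x)^{2j+1}+(\tfrac{\xi}{2}-x)^{2j+1}-\tau$ an even polynomial in $x$ and hence immediately exhibits exactly two zeros $\pm y$; you instead work directly in $\xi_1$ and bound the number of monotone branches by $2j$, which is harmless since the count is uniform in $\xi,\tau$. The Jacobian computation, the cancellation of the $m_j$-powers, and the Young's inequality step for \eqref{Semendjajew} are identical in both arguments.
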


\begin{proof}
 Taking the Fourier transform first in space and then in time we obtain
$$ \F_x M_{p,j}(u,v)(\xi,t)= c \int_* d\xi_1m_{j}(\xi_1,\xi_2)^{\frac{1}{p}}e^{it(\xi_1^{2j+1}+\xi_2^{2j+1})}\F_xu_0(\xi_1)\F_xv_0(\xi_2)$$
and
$$ \F M_{p,j}(u,v)(\xi,\tau)= c \int_* d\xi_1m_j(\xi_1,\xi_2)^{\frac{1}{p}} \delta(\tau - \xi_1^{2j+1}- \xi_2^{2j+1})\F_xu_0(\xi_1)\F_xv_0(\xi_2),$$
respectively. Substituting $x=\xi_1-\frac{\xi}{2}$ we get
\begin{multline*}
 \F M_{p,j}(u,v)(\xi,\tau)= \\
c  \int  m_j(\tfrac{\xi}{2}+x,\tfrac{\xi}{2}-x)^{\frac{1}{p}} \delta((\tfrac{\xi}{2}+x)^{2j+1}+(\tfrac{\xi}{2}-x)^{2j+1}- \tau)\F_xu_0(\tfrac{\xi}{2}+x)\F_xv_0(\tfrac{\xi}{2}-x)dx
\end{multline*}
We use $\delta(g(x)) = \sum_n \frac{1}{|g'(x_n)|} \delta(x-x_n)$, where the sum is taken over all simple zeros of $g$, 
in our case
\begin{equation}\label{g1}
g(x)=(\tfrac{\xi}{2}+x)^{2j+1}+(\tfrac{\xi}{2}-x)^{2j+1}- \tau=\xi \sum_{l=0}^j\binom{ 2j+1 }{ 2l}(\tfrac{\xi}{2})^{2(j-l)}x^{2l}-\tau,
\end{equation}
which has only two zeros, say $\pm y$, where we take $y>0$. Then
\begin{equation}\label{g2}
 |g'(\pm y)|=|\xi| y\sum_{l=1}^j\binom{2j+1}{2l}2l(\tfrac{\xi}{2})^{2(j-l)}y^{2(l-1)} 
\gs m_j(\tfrac{\xi}{2}+y,\tfrac{\xi}{2}-y),
\end{equation}
and hence
\begin{multline}\label{201}
 \F M_{p,j}(u,v)(\xi,\tau) \\
 \ls m_j(\tfrac{\xi}{2}+y,\tfrac{\xi}{2}-y)^{-\tfrac{1}{p'}}\left(\F_xu_0(\tfrac{\xi }{2}+y)\F_xv_0(\tfrac{\xi }{2}-y) + \F_xu_0(\tfrac{\xi }{2}-y)\F_xv_0(\tfrac{\xi }{2}+y)\right). 
\end{multline}
Using $d\tau = g'(y)dy$, we see that the $L_{\tau}^{p'}$-norm of both contributions equals
$$\left(\int dy |\F_xu_0(\tfrac{\xi }{2}\pm y)\F_xv_0(\tfrac{\xi }{2}\mp y)|^{p'}\right)^{\frac{1}{p'}} = c \left(|\F_xu_0|^{p'}*|\F_xv_0|^{p'}(\xi) \right)^{\frac{1}{p'}},$$
which gives \eqref{Bronstein}. Now we choose $\rho'=\frac{q'}{p'}$  and $\rho_{1,2}$ with $\rho'_{1,2}= \frac{r'_{1,2}}{p'}$,
so that $\frac{1}{\rho}=\frac{1}{\rho_1}+\frac{1}{\rho_2}$. Then, using Young's inequality in the third step, we obtain
\begin{multline*}
 \n{M_{p,j}(u,v)}{\widehat{L_x^q}(\widehat{L^p_t})}\ls 
\left( \int d\xi (|\F_x{u_0}|^{p'}*|\F_x{v_0}|^{p'}(\xi))^{\frac{q'}{p'}}\right)^{\frac{1}{q'}} \\
= \||\F_x{u_0}|^{p'}*|\F_x{v_0}|^{p'}\|^{\frac{1}{p'}}_{L^{\rho'}_{\xi}}\ls 
\left(\n{|\F_x{u_0}|^{p'}}{L^{\rho'_1}_{\xi}}\n{|\F_x{v_0}|^{p'}}{L^{\rho'_2}_{\xi}}\right)^{\frac{1}{p'}}
=\n{u_0}{\widehat{L^{r_1}_x}}\n{v_0}{\widehat{L^{r_{2}}_x}},
\end{multline*}
which is the desired bound \eqref{Semendjajew}.
\end{proof}

\begin{kor}
 \label{xBill} 
For $p$, $q$, $r_{1,2}$ as in the previous lemma and $b > \frac{1}{p}$ the estimate
\begin{equation}
  \label{xBronstein}
\n{M_{p,j}(u_1, u_2)}{\widehat{L^q_x}(\widehat{L^p_t})} \ls \n{u_1}{X^{r_1,p}_{0,b}}\n{u_2}{X^{r_2,p}_{0,b}}
\end{equation}
is valid. Moreover, we have for $b_i > \frac{1}{r_i}$
\begin{equation}
 \label{xSemendjajew}
\n{M_{p,j}(u_1, u_2)}{\widehat{L^q_x}(\widehat{L^p_t})} \ls \n{u_1}{X^{r_1}_{0,b_1}}\n{u_2}{X^{r_2}_{0,b_2}}.
\end{equation}
\end{kor}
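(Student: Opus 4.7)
The plan is the standard transference principle: I would decompose each $u_i$ as a superposition of modulated free solutions, apply \eqref{Semendjajew} of Lemma \ref{Bill} to each pair, and then use H\"older's inequality in the modulation variables to recover the $X^{r_i,p}_{0,b}$- (resp.\ $X^{r_i}_{0,b_i}$-) norm. Concretely, setting $g_i(\xi,\tau_i) := \F u_i(\xi, \tau_i + \phi(\xi))$ and defining $v_{i,\tau_i}$ by $\F_x v_{i,\tau_i} = g_i(\cdot,\tau_i)$, Fourier inversion gives
\[ u_i(x,t) = \int e^{it\tau_i}\,[e^{(-1)^j t\pd_x^{2j+1}} v_{i,\tau_i}](x)\, d\tau_i. \]
Substituted into $M_{p,j}(u_1,u_2)$, the modulations combine into an outer factor $e^{it(\tau_1+\tau_2)}$ which merely translates the Fourier variable $\tau$ and hence leaves the $\widehat{L^p_t}$-norm invariant. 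Minkowski's inequality in the Fourier integrations then pulls the $\tau_1, \tau_2$-integrations outside the $\widehat{L^q_x}(\widehat{L^p_t})$-norm, and \eqref{Semendjajew} applied to each resulting pair of free solutions reduces both claims to
\[ \int \|g_i(\cdot,\tau_i)\|_{L^{r'_i}_\xi}\, d\tau_i \;\ls\; \|u_i\|_{X^{r_i,p}_{0,b}}\quad\text{resp.}\quad \|u_i\|_{X^{r_i}_{0,b_i}}, \qquad i=1,2. \]

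For \eqref{xSemendjajew} I would apply H\"older in $\tau_i$ with the conjugate pair $(r_i, r'_i)$, noting that the weight $\langle\tau_i\rangle^{-b_i}$ lies in $L^{r_i}_{\tau_i}$ precisely under the hypothesis $b_i>1/r_i$; Fubini on the resulting weighted $L^{r'_i}_{\xi,\tau}$-norm then identifies the bound with $\|u_i\|_{X^{r_i}_{0,b_i}}$. For \eqref{xBronstein} the same scheme with H\"older pair $(p,p')$ and weight $\langle\tau_i\rangle^{-b}\in L^p_{\tau_i}$, which requires $b>1/p$, produces the mixed-norm quantity $\|\langle\tau\rangle^b g_i\|_{L^{p'}_{\tau}L^{r'_i}_\xi}$.

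The remaining -- and only genuinely delicate -- step of \eqref{xBronstein} is to swap the order of the $\xi$- and $\tau$-integrations so as to match the definition $\|u_i\|_{X^{r_i,p}_{0,b}} = \|\langle\tau\rangle^b g_i\|_{L^{r'_i}_\xi L^{p'}_\tau}$. This rearrangement is Minkowski's integral inequality, which is valid precisely when $r'_i \ge p'$, i.e.\ $r_i \le p$: exactly the hypothesis carried over from Lemma \ref{Bill}. This last exchange is the structural reason for introducing the extra parameter $p$ in the $X^{r,p}_{s,b}$-scale, which widens the range of admissible $r$ in the later multilinear estimates of Section \ref{moreestimates}.
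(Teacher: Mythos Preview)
Your argument is correct and follows the same transference principle as the paper. The one organizational difference worth noting: instead of invoking the global estimate \eqref{Semendjajew} and then performing the final Minkowski swap $L^{p'}_\tau L^{r'_i}_\xi \to L^{r'_i}_\xi L^{p'}_\tau$, the paper applies the \emph{pointwise}-in-$\xi$ estimate \eqref{Bronstein} inside the $\widehat{L^p_t}$-norm, carries the convolution structure $(|\F_x g_1|^{p'}*|\F_x g_2|^{p'})^{1/p'}$ through the H\"older step in $\sigma_1,\sigma_2$, and only at the very end takes $L^{q'}_\xi$ via Young's inequality; the $X^{r_i,p}_{0,b}$-norm (outer $L^{r'_i}_\xi$, inner $L^{p'}_\tau$) then falls out directly with no rearrangement needed. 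The paper also derives \eqref{xSemendjajew} from \eqref{xBronstein} by a further H\"older in $\tau$, rather than running the transference a second time. Both routes are equivalent in strength; yours makes the role of the hypothesis $r_i\le p$ slightly more visible, while the paper's avoids the extra norm interchange.
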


\begin{proof}
 We write $U(t)=e^{(-1)^jt\pd_x^{2j+1}}$ for the linear propagator and define, for $i \in \{1,2\}$, $g_i=\F_tU(-\cdot)u_i$,
where $\F_t$ denotes the Fourier transform in the time variable only. Then
$$u_i(t)= c \int e^{it \sigma_i}U(t) g_i(\sigma_i)d \sigma_i$$
and hence
$$M_{p,j}(u_1, u_2)(t)=c\int e^{it (\sigma_1 + \sigma_2)}M_{p,j}(U(t)g_1(\sigma_1),U(t)g_2(\sigma_2))d \sigma_1d \sigma_2.$$
Now we apply Minkowski's integral inequality, \eqref{Bronstein}, H\"older's inequality and Fubini's Theorem to obtain
\begin{eqnarray*}
 \n{\F_x M_{p,j}(u_1, u_2)(\xi)}{\widehat{L^p_t}} & \ls &
 \int \n{\F_x M_{p,j}(Ug_1(\sigma_1), Ug_2(\sigma_2))(\xi)}{\widehat{L^p_t}}d \sigma_1d \sigma_2 \\
& \ls & \int \left( |\F_xg_1(\sigma_1)|^{p'}*|\F_xg_2(\sigma_2)|^{p'}(\xi)\right)^{\frac{1}{p'}} d \sigma_1 d \sigma_2\\
& \ls & \left(\int \lb \sigma_1\rb^{bp'}\lb \sigma_2\rb^{bp'}|\F_xg_1(\sigma_1)|^{p'}*|\F_xg_2(\sigma_2)|^{p'}(\xi)d \sigma_1 d \sigma_2\right)^{\frac{1}{p'}} \\
& = & \left( \big( \int \lb \tau \rb^{bp'} |\F_xg_1(\tau)|^{p'}d \tau \big)* \big( \int \lb \tau \rb^{bp'} |\F_xg_1(\tau)|^{p'}d \tau \big) \right)^{\frac{1}{p'}}\\
& = & \left( \|\lb \tau \rb^b \F U(-\cdot)u_1\|^{p'}_{L^{p'}_{\tau}}*\|\lb \tau \rb^b \F U(-\cdot)u_2\|^{p'}_{L^{p'}_{\tau}}(\xi)\right)^{\frac{1}{p'}}.
\end{eqnarray*}
Now the $L^{q'}_{\xi}$-norm of this last expression is estimated by Young's inequality as at the end of the previous proof,
which gives \eqref{xBronstein}. Finally, two applications of H\"older's inequality lead to \eqref{xSemendjajew}.
\end{proof}
The next step is to dualize the estimate \eqref{xSemendjajew}. For that purpose we introduce the bilinear operator $M^*_{r,j}$,
which we define again in terms of Fourier transforms by
$$\F_x M^*_{r,j} (f,g) (\xi) := \int_*d\xi_1 m^*_j(\xi_1,\xi_2)^{\frac{1}{r}} \F_xf(\xi_1)\F_xg(\xi_2),$$
where
$$m^*_j(\xi_1,\xi_2)=|\xi_1||\xi_1+2\xi_2|((\xi_1+\xi_2)^{2(j-1)}+\xi_2^{2(j-1)}) \sim |\xi_1||\xi_1+2\xi_2|(\xi_1^{2(j-1)}+\xi_2^{2(j-1)}).$$
Then \eqref{xSemendjajew} in Corollary \ref{xBill} expresses the boundedness of
$$u_1 \mapsto M_{p,j}(u_1,u_2), \quad \X{r_1}{0}{b_1} \rightarrow \widehat{L^q_x}(\widehat{L^p_t})$$
with operator norm $\ls \n{u_2}{X^{r_2}_{0,b_2}}$.  By duality, under the additional hypothesis $1<p,q,r_{1,2}< \infty$, 
this implies the boundedness of
$$u_3 \mapsto M^*_{p,j}(u_3,\overline{u}_2), \quad   \widehat{L^{q'}_x}(\widehat{L^{p'}_t}) \rightarrow \X{r'_1}{0}{-b_1}$$
with the same norm. Using $\n{u_2}{X^{r}_{s,b}}=\n{\overline{u}_2}{X^{r}_{s,b}}$ we obtain the following estimate.
\begin{kor}\label{xBill*} Let $1 < q \le r_{1,2} \le p < \infty$, $\frac{1}{p}+\frac{1}{q}=\frac{1}{r_1}+\frac{1}{r_2}$ and 
$b_i > \frac{1}{r_i}$. Then
\begin{equation}\label{202}
\n{M^*_{p,j}(u_3,u_2)}{X^{r'_1}_{0,-b_1}}\ls \n{u_2}{X^{r_2}_{0,b_2}}\n{u_3}{\widehat{L^{q'}_x}(\widehat{L^{p'}_t})}. 
\end{equation}
\end{kor}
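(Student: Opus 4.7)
The plan is to verify the duality argument that is outlined verbally just before the corollary's statement. The setup of Corollary \ref{xBill} (in the form \eqref{xSemendjajew}), for fixed $u_2$, realizes $T_{u_2}: u_1 \mapsto M_{p,j}(u_1,u_2)$ as a bounded linear operator $X^{r_1}_{0,b_1} \to \widehat{L^q_x}(\widehat{L^p_t})$ with operator norm $\ls \n{u_2}{X^{r_2}_{0,b_2}}$. I would then invoke the standard duality identifications: $(X^{r_1}_{0,b_1})^* = X^{r'_1}_{0,-b_1}$ (via the Fourier-side pairing $\lb f,g\rb = \int \F f\,\overline{\F g}\,d\xi d\tau$) and $(\widehat{L^q_x}(\widehat{L^p_t}))^* = \widehat{L^{q'}_x}(\widehat{L^{p'}_t})$, both of which require the full reflexivity condition $1 < p,q,r_{1,2} < \infty$. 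Thus $\n{T_{u_2}^*}{} = \n{T_{u_2}}{} \ls \n{u_2}{X^{r_2}_{0,b_2}}$, and the whole content of the corollary reduces to the algebraic identification $T_{u_2}^* u_3 = M^*_{p,j}(u_3,\overline{u_2})$.

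To check this identification, I would compute $\lb M_{p,j}(u_1,u_2), u_3\rb$ on the Fourier side. Unfolding the definitions yields a quadruple integral of the form
\begin{equation*}
\int d\xi_1 d\xi_2 d\tau_1 d\tau_2\, m_j(\xi_1,\xi_2)^{\frac{1}{p}} \F u_1(\xi_1,\tau_1)\F u_2(\xi_2,\tau_2) \overline{\F u_3(\xi_1+\xi_2,\tau_1+\tau_2)}.
\end{equation*}
Using $\overline{\F u_2(\xi_2,\tau_2)} = \F\overline{u_2}(-\xi_2,-\tau_2)$, the substitution $\eta_2 = -\xi_2$, $\sigma_2 = -\tau_2$, and setting $\mu_1 = \xi_1+\xi_2 = \xi_1 - \eta_2$, one rewrites the pairing as an integral against $\F u_1(\xi_1,\tau_1)$ whose remaining kernel in $(\mu_1,\eta_2)$ matches the Fourier symbol of $M^*_{p,j}(u_3,\overline{u_2})$. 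The key algebraic point is the identity
\begin{equation*}
m_j(\mu_1+\eta_2,\eta_2) = |\mu_1+2\eta_2||\mu_1|\bigl((\mu_1+\eta_2)^{2(j-1)}+\eta_2^{2(j-1)}\bigr) = m^*_j(\mu_1,\eta_2),
\end{equation*}
together with the evenness $m_j(\xi_1,-\xi_2) = m_j(\xi_1,\xi_2)$, which makes the complex conjugation harmless. Fubini's theorem justifies the interchange of integrations.

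Once the adjoint identity $T_{u_2}^* u_3 = M^*_{p,j}(u_3,\overline{u_2})$ is in hand, \eqref{202} follows directly from the operator-norm bound together with the isometry $\n{\overline{u_2}}{X^{r_2}_{0,b_2}} = \n{u_2}{X^{r_2}_{0,b_2}}$ already noted in the excerpt. I expect the main obstacle to be purely book-keeping: making the change of variables transparent enough to see that the symbol of the adjoint is indeed $m^*_j$ and not some permutation, and carefully accounting for the complex conjugate so that $\overline{u_2}$ (and not $u_2$) appears in the final expression. The reflexivity hypothesis $1<p,q,r_{1,2}<\infty$ is essential at exactly one point, namely to identify $(\widehat{L^q_x}(\widehat{L^p_t}))^{**}$ and $(X^{r_1}_{0,b_1})^{**}$ with the original spaces when passing from the operator-norm bound on $T_{u_2}$ to that of its adjoint.
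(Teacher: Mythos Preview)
Your proposal is correct and follows exactly the route the paper takes: the paper's entire argument is the duality sketch in the paragraph preceding the corollary, and you have simply (and correctly) fleshed out the symbol computation showing $T_{u_2}^*u_3 = M^*_{p,j}(u_3,\overline{u_2})$ via $m_j(\mu_1+\eta_2,\pm\eta_2)=m^*_j(\mu_1,\eta_2)$. One minor quibble: you do not actually need the double-dual identifications you mention at the end --- the bound $\|T^*\|=\|T\|$ for the abstract adjoint $T^*\colon Y^*\to X^*$ holds without reflexivity, and the hypothesis $1<p,q,r_1<\infty$ is used only once, to identify the single duals $(\widehat{L^q_x}(\widehat{L^p_t}))^*$ and $(X^{r_1}_{0,b_1})^*$ with the concrete spaces $\widehat{L^{q'}_x}(\widehat{L^{p'}_t})$ and $X^{r_1'}_{0,-b_1}$.
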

The special case in \eqref{202}, where $p=q=r_{1,2}$, will be sufficient for our purposes. In this case, \eqref{202} 
can be written as
\begin{equation}\label{203}
\n{M^*_{r',j}(u_3,u_2)}{X^{r}_{0,b'}}\ls \n{u_3}{\widehat{L^{r}_{xt}}}\n{u_2}{X^{r'}_{0,-b'}}. 
\end{equation}
provided $1 < r < \infty$, $b'<-\frac{1}{r'}$. Combining this with the trivial endpoint of the Hausdorff-Young 
inequality, i. e.
$$\n{u_3 u_2}{\widehat{L^{r}_{xt}}}\ls \n{u_3}{\widehat{L^{r}_{xt}}}\n{u_2}{\widehat{L^{\infty}_{xt}}},$$
we obtain by elementary H\"older estimates
\begin{equation}\label{204}
\n{M^*_{\rho',j}(u_3,u_2)}{X^{r}_{0,\beta}}\ls \n{u_3}{\widehat{L^{r}_{xt}}}\n{u_2}{X^{\rho'}_{0,-\beta}}, 
\end{equation}
where $0 \le \frac{1}{\rho'}\le \frac{1}{r'}$ and $\beta < -\frac{1}{\rho'}$. In this form actually we shall make use of Corollary \ref{xBill*}.

\quad

Combining the calculation in the proof of Lemma \ref{Bill} with Sobolev's embedding Theorem, Parseval's identity, and
the Hardy-Littlewood-Sobolev-inequality, we obtain the following linear estimate.

\begin{lemma}
\label{FeSt} For $4<q<\infty$ and $\frac{1}{r}=\frac{1}{2}+\frac{1}{q}$ the estimate
$$ \n{D_x^{\frac{2j-1}{4}}u}{L_t^4(L_x^q)} \ls \n{u_0}{\widehat{L^{r}_x}}$$
is valid.
\end{lemma}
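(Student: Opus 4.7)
Following the hint, I would combine squaring, Sobolev embedding, Parseval, the $\delta$-function machinery from the proof of Lemma \ref{Bill}, and a final Hardy--Littlewood--Sobolev step. Writing $\alpha=(2j-1)/4$, I first square
\[
\|D_x^{\alpha}u\|_{L^4_t(L^q_x)}^2 = \|(D_x^\alpha u)^2\|_{L^2_t(L^{q/2}_x)}
\]
and use the 1D Sobolev embedding $\dot H^\gamma(\R)\hookrightarrow L^{q/2}(\R)$ with $\gamma=\tfrac12-\tfrac2q$ to reduce matters to $\|D_x^\gamma(D_x^\alpha u)^2\|_{L^2_{x,t}}$. By Parseval's identity in both $x$ and $t$, the latter equals $\||\xi|^\gamma\F[(D_x^\alpha u)^2](\xi,\tau)\|_{L^2_{\xi,\tau}}$.

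Next I would rerun the $\delta$-function reduction from the proof of Lemma \ref{Bill}. For fixed $(\xi,\tau)$, the substitution $\xi_1=\xi/2+x$ identifies $\F[(D_x^\alpha u)^2](\xi,\tau)$ with a weighted sum over the simple roots $\pm y$ of the even polynomial $g$ in \eqref{g1}; using \eqref{g2} and the derivation of \eqref{201}, one has
\[
|\F[(D_x^\alpha u)^2](\xi,\tau)| \ls \frac{|\xi/2+y|^\alpha|\xi/2-y|^\alpha}{|g'(y)|}\widehat{u_0}\bigl(\tfrac\xi2+y\bigr)\widehat{u_0}\bigl(\tfrac\xi2-y\bigr).
\]
Changing variables from $(\xi,y)$ with $y>0$ to $(a,b)=(\xi/2+y,\xi/2-y)$ with $d\xi\,dy=da\,db$, the bound $|g'(y)|\gs m_j(a,b)=|a+b||a-b|(a^{2(j-1)}+b^{2(j-1)})$, the AM--GM inequality $a^{2(j-1)}+b^{2(j-1)}\gs|a|^{j-1}|b|^{j-1}$, and the arithmetic identities $2\alpha-(j-1)=1/2$ and $2\gamma-1=-4/q$ lead to
\[
\|D_x^\alpha u\|_{L^4_t(L^q_x)}^4 \ls \iint \frac{|a|^{1/2}|b|^{1/2}}{|a+b|^{4/q}|a-b|}\widehat{u_0}(a)^2\widehat{u_0}(b)^2\,da\,db.
\]

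Finally, I would close the argument by a one-dimensional Hardy--Littlewood--Sobolev estimate. Since $1/r'=1/2-1/q$, the above double integral is scale-invariant of the correct degree to be controlled by $\|\widehat{u_0}\|_{L^{r'}}^4=\|u_0\|_{\widehat{L^r_x}}^4$. Decomposing the $(a,b)$-plane according to the identities $\max\{|a+b|,|a-b|\}=|a|+|b|$ and $\min\{|a+b|,|a-b|\}=\bigl||a|-|b|\bigr|$ on $\R$, each piece reduces to an iterated one-dimensional fractional integral to which HLS applies in the exponent range $r'>2$. The main obstacle is exactly this HLS step: the factor $|a-b|^{-1}$ sits at the borderline of 1D Hardy--Littlewood--Sobolev and the combination with the $|a+b|^{-4/q}$-singularity and the weight $|a|^{1/2}|b|^{1/2}$ demands a careful geometric decomposition to push the working exponents away from the $L^2$-endpoint; the strict inequality $q<\infty$ (so $4/q>0$) is what provides the room.
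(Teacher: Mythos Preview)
Your reduction goes wrong at the very last step, and the obstacle you flag is in fact fatal rather than merely technical. After your AM--GM estimate you arrive at
\[
\iint \frac{|a|^{1/2}|b|^{1/2}}{|a+b|^{4/q}\,|a-b|}\,\widehat{u_0}(a)^2\,\widehat{u_0}(b)^2\,da\,db,
\]
and this integral is \emph{not} controlled by $\|\widehat{u_0}\|_{L^{r'}}^4$: take $\widehat{u_0}=\chi_{[N,N+1]}$, so that $|a|\sim|b|\sim|a+b|\sim N$ while $|a-b|\le 1$, and the integrand contains the non-integrable factor $|a-b|^{-1}$. No HLS decomposition can repair this, because the exponent on $|a-b|$ sits exactly at the forbidden endpoint; the ``room'' provided by $4/q>0$ lives on the $|a+b|$ factor and is useless in the regime $a\approx b$. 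The loss happens already at AM--GM: when $|a|\sim|b|$ the inequality $a^{2(j-1)}+b^{2(j-1)}\gs|a|^{j-1}|b|^{j-1}$ is sharp, so you cannot recover anything. (There is also a minor slip at the start: $\|v\|_{L^4(L^q)}^2=\||v|^2\|_{L^2(L^{q/2})}$, not $\|v^2\|$.)

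The paper's proof avoids this by a reduction you are missing. One first assumes $\widehat{u_0}$ is supported on $[0,\infty)$ and works with $|v|^2=v\bar v$ rather than $v^2$. The phase is the same as in Lemma~\ref{Bill}, but the arguments of $\widehat{v_0}$ and $\overline{\widehat{v_0}}$ become $y+\xi/2$ and $y-\xi/2$; the support assumption forces both to be nonnegative, hence $2y=|y+\xi/2|+|y-\xi/2|$ and $y$ dominates both arguments. This means the $y^{2(j-1)}$ term in $|g'(y)|$ absorbs the full $D_x^{(2j-1)/4}$ weights, leaving only the single integrable singularity $|z_+-z_-|^{2\varepsilon-1}$ with $2\varepsilon-1\in(-1,0)$, to which one clean HLS application applies. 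The half-line assumption is then removed by the conjugation symmetry $\widehat{\bar w}(\xi)=\overline{\widehat w(-\xi)}$. Without this support trick the diagonal $a\approx b$ is genuinely bad, which is precisely what your counterexample detects.
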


\begin{proof}
 We assume first that $\F_x{u_0}= \chi_{[0,\infty)}\F_x{u_0}$ and write $v = D_x^{\frac{2j-1}{4}}u$. Then
$$ \|D_x^{\frac{2j-1}{4}}u\|^4_{L_t^4(L_x^q)} = \|v\|^4_{L_t^4(L_x^q)} 
= \||v|^2\|^2_{L_t^2(L_x^{\frac{q}{2}})} \ls \|D_x^{\e}|v|^2\|^2_{L_{xt}^2}=\|\F D_x^{\e}|v|^2\|^2_{L_{\xi \tau}^2}$$
for $\varepsilon = \frac{1}{2}-\frac{2}{q}$ by Sobolev's embedding theorem and Parseval's identity. Now with $x=\xi_1-\frac{\xi}{2}$ as in the proof of Lemma \ref{Bill} we have
$$\F D_x^{\e}|v|^2(\xi,\tau)= c|\xi|^{\e} \int\delta((\tfrac{\xi}{2}+x)^{2j+1}+(\tfrac{\xi}{2}-x)^{2j+1}- \tau)\F_xv_0(\tfrac{\xi}{2}+x)\overline{\F_xv_0}(x-\tfrac{\xi}{2})dx$$
with the same argument $g$ of $\delta$ as in \eqref{g1}, \eqref{g2}. By the support assumption on $\F_x u_0$ (and thus
$\F_x v_0$) we only get one contribution for the positive zero $y$ of $g$ and we have $2y = |\tfrac{\xi}{2}+y|+|\tfrac{\xi}{2}-y|$, so that $y$ controls both, the argument of $\F_x v_0$ as well as of $\overline{\F_x v_0}$.
This gives
\begin{multline*}
\F D_x^{\e}|v|^2(\xi,\tau) \ls  |\xi|^{\e - \frac12}\frac{1}{\sqrt{|g'(y)|}}\F_x (D_x^{-\frac{2j-1}{4}}v_0)(y+\tfrac{\xi}{2})
\overline{\F_x (D_x^{-\frac{2j-1}{4}}v_0)}(y-\tfrac{\xi}{2})\\
 =  |\xi|^{\e - \frac12}\frac{1}{\sqrt{|g'(y)|}}\F_xu_0(y+\tfrac{\xi}{2})\overline{\F_xu_0}(y-\tfrac{\xi}{2}). \hspace{3cm}
\end{multline*}
Squaring this last inequality and integrating with respect to $d \tau = g'(y) dy$ and to $d \xi$ we arrive at
\begin{multline*}
\|\F D_x^{\e}|v|^2\|^2_{L_{\xi \tau}^2}
\ls \int d \xi dy |\xi|^{2\e -1}|\F_xu_0(y+\tfrac{\xi}{2})\overline{\F_xu_0}(y-\tfrac{\xi}{2})|^2 \\
= \int d z_+ dz_- |z_+-z_-|^{2\e -1}|\F_xu_0(z_+)\overline{\F_xu_0}(z_-)|^2.\hspace{2.4cm}
\end{multline*}
Using the Hardy-Littlewood-Sobolev-inequality, which requires $0 < 1-2\e<1$ and $\frac{4}{r'}+1-2\e=2$, that is
$4<q<\infty$ and $\frac{1}{r}=\frac{1}{2}+\frac{1}{q}$ as assumed, we see that the latter is bounded by
$\||\F_x u_0|^2\|^2_{L^{\frac{r'}{2}}}= \|u_0\|^4_{\widehat{L}^r_x}$. So, in the special case where $\F_x{u}= \chi_{[0,\infty)}\F_x{u}$, the desired estimate is shown. Now, if $\F_x{w}= \chi_{(-\infty,0]}\F_x{w}$ and $u=\overline{w}$, then by $\F_x{\overline{w}}(\xi) = \overline{\F_x{w}}(-\xi)$ we see that $\F_x{u}= \chi_{[0,\infty)}\F_x{u}$. Thus the estimate is valid for $u$. Hence

$$\|D_x^{\frac{2j-1}{4}}w\|_{L_t^4(L_x^q)}  =  \|D_x^{\frac{2j-1}{4}}\overline{w}\|_{L_t^4(L_x^q)}
=\|D_x^{\frac{2j-1}{4}}u\|_{L_t^4(L_x^q)} \ls \n{u_0}{\widehat{L^{r}_x}} = \n{w_0}{\widehat{L^{r}_x}}.$$
Finally the decomposition $u=u_+ + u_-$ with $\F_x{u_+}= \chi_{[0,\infty)}\F_x{u}$ yields the desired result in the general case.
\end{proof}

The endpoint case $(p,q)=(4,\infty)$ is known to be true, too, see Theorem 2.1 in \cite{KPV91}. Next we interpolate among
Lemma \ref{Bill}, the conservation of the $L^2$ -  norm and the trivial estimate

$$\n{u}{L^{\infty}_{xt}} \ls \n{u_0}{\widehat{L^{\infty}_x}}$$
to obtain the following generalization.

\begin{kor}
 \label{genFeSt} Let $\frac{1}{r} = \frac{2}{p} + \frac{1}{q}$. Then the estimate

$$ \n{D_x^{\frac{2j-1}{p}}u}{L_t^p(L_x^q)} \ls \n{u_0}{\widehat{L^{r}_x}}$$

holds true, if one of the following conditions is fulfilled:
\begin{itemize}
\item[i)] $0 \le \frac{1}{p} \le \frac{1}{4}$, $0 \le \frac{1}{q} < \frac{1}{4}$ or
\item[ii)] $\frac{1}{4} \le \frac{1}{q} \le \frac{1}{q} + \frac{1}{p} < \frac{1}{2}$ or
\item[iii)] $(p,q) = (\infty , 2)$.
\end{itemize}
If in addition $b> \frac{1}{r}$, then
\begin{equation}
 \label{xFeSt}
\|D_x^{\frac{2j-1}{p}}u_1\|_{L_t^p(L_x^q)} \ls \|u_1\|_{X^r_{0,b}}.
\end{equation}

\end{kor}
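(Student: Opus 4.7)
The plan is to interpolate among three endpoint estimates for the free propagator $U(t)=e^{(-1)^j t\pd_x^{2j+1}}$ and then lift the result to the $X^r_{0,b}$ scale by the standard transfer principle. The three endpoints, regarded as tuples $(1/p,1/q,s,1/r)$ with derivative index $s$ and data exponent $1/r$, are: \emph{(E1)} Lemma~\ref{FeSt} augmented by the sharp $(p,q)=(4,\infty)$ bound of \cite[Theorem 2.1]{KPV91}, which yields the entire segment $\{1/p=1/4,\,0\le 1/q\le 1/4\}$ with $s=(2j-1)/4$ and $1/r=1/2+1/q$; \emph{(E2)} the unitarity $\|u\|_{L^\infty_t L^2_x}=\|u_0\|_{L^2_x}$, i.e.\ the vertex $(0,1/2)$ with $s=0$ and $1/r=1/2$; and \emph{(E3)} the trivial $\|u\|_{L^\infty_{xt}}\le\|u_0\|_{\widehat{L^\infty_x}}$, the vertex $(0,0)$ with $s=0$ and $1/r=0$. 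A direct check shows that any convex combination of these endpoints automatically satisfies $s=(2j-1)/p$ and the scaling $1/r=2/p+1/q$, so these are the correct indices at the interpolated point.

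For the interpolation itself I would apply Stein's theorem to the analytic family
$$T_z u_0(t)\ :=\ \F_x^{-1}\bigl(|\xi|^{z}\,\F_x U(t)u_0\bigr),\qquad 0\le\mathrm{Re}\,z\le (2j-1)/4.$$
On $\mathrm{Re}\,z=0$ the multiplier $|\xi|^{iy}$ has modulus one and commutes with $U(t)$, so $T_z$ inherits \emph{(E2)} and \emph{(E3)} with constants independent of $y$; on $\mathrm{Re}\,z=(2j-1)/4$ the factor $|\xi|^{iy}$ is a Mikhlin multiplier bounded on $L^q_x$ with only polynomial growth in $y$ for every $1<q<\infty$, so $T_z$ inherits \emph{(E1)} with polynomial growth in $y$ as well. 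Iterating Stein's theorem -- first interpolating \emph{(E1)} with \emph{(E3)} to cover the triangle $\{0\le 1/q\le 1/p\le 1/4\}$, and then interpolating the resulting family with \emph{(E2)} -- produces $T_{(2j-1)/p}\colon\widehat{L^r_x}\to L^p_t(L^q_x)$ at every $(1/p,1/q)$ in the quadrilateral with vertices $(0,0)$, $(1/4,0)$, $(1/4,1/4)$, $(0,1/2)$. The strict inequalities in conditions (i)--(ii) correspond to excluding the corner $(1/4,1/4)$ and the hypotenuse $1/p+1/q=1/2$, in agreement with the strict condition $4<q$ in Lemma~\ref{FeSt}.

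Finally, for \eqref{xFeSt} I would invoke the standard restriction-norm transfer principle: writing $u_1(t)=c\int e^{it\sigma}U(t)g(\sigma)\,d\sigma$ with $g(\sigma)=\F_t(U(-\cdot)u_1)(\sigma)$, Minkowski's integral inequality moves the $L^p_t(L^q_x)$-norm inside the $\sigma$-integral, the free-solution bound applies slicewise to $D_x^{(2j-1)/p}U(t)g(\sigma)$, and a final H\"older in $\sigma$ against $\lb\sigma\rb^{-b}$, which is $L^r_\sigma$-integrable precisely for $b>1/r$, yields \eqref{xFeSt}. The main technical effort is the careful Stein interpolation with varying derivative index; once the analytic family $T_z$ is in place, however, the correct derivative $(2j-1)/p$ appears automatically at the interpolation parameter and the remaining bookkeeping is routine.
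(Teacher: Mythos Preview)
Your approach is correct and matches the paper's own one-line sketch: interpolate among Lemma~\ref{FeSt} (together with the $(4,\infty)$ endpoint from \cite{KPV91}), the $L^2$-conservation, and the trivial $\widehat{L^\infty}$-bound, then lift to $X^r_{0,b}$ by the standard transfer argument. One small simplification: since the data norms $\|\cdot\|_{\widehat{L^r_x}}$ are $L^{r'}$-norms on the Fourier side, the unimodular factor $|\xi|^{iy}$ is absorbed for free in all three endpoint estimates, so no appeal to Mikhlin is needed (which is fortunate, as Mikhlin would fail at the $q=\infty$ endpoint you also use).
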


The case $p=q$ in the preceeding Lemma is of special interest, here the conditions reduce to $0 \le \frac{1}{p} = \frac{1}{3r} < \frac{1}{4}$. The corresponding estimate for the Schr\"odinger equation goes back to Fefferman and Stein \cite{F70}.
Unfortunately these linear estimates fail to be true for $r \le \frac{4}{3}$ and are thus not sufficient for our purposes;
in our context they will mainly be used for several interpolation arguments.\footnote{Nonetheless it is the author's belief
that these estimates are of independent interest. For example, if combined with duality and the Christ-Kiselev-Lemma (see \cite{CK})
they imply a wider range of Strichartz type estimates for the solutions of the corresponding inhomogeneous equations than
previously known. More precisely, if $(p,q)$ and $(\tilde{p},\tilde{q})$ are two pairs of H\"older exponents admitted in Corollary
\ref{genFeSt}, which in addition satisfy $\frac{2}{p}+\frac{1}{q}+\frac{2}{\tilde{p}}+\frac{1}{\tilde{q}}=1$, then
$$\|D_x^{(2j-1)(\frac{1}{p}+\frac{1}{\tilde{p}})}\int_0^t e^{-(t-s)\pd_x^{2j+1}}F(s)ds\|_{L_t^p(L_x^q)} \ls \|F\|_{L_t^{\tilde{p}'}(L_x^{\tilde{q}'})}.$$ In view of the recent work of Vilela \cite{MCV}, Foschi \cite{Fo}, Taggart \cite{T},
and Ovcharov \cite{Ov}, the assumption $p, \tilde{p}\ge4$ seems to be redundant for this last estimate.} To overcome this difficulty we will prove in the sequel certain trilinear
estimates for free solutions, where - at least in one of two cases - the singularity can be distributed on two factors, so that
two applications of the HLS-inequality allow the full range $r>1$.

\subsection{Trilinear estimates}

To prove the trilinear estimates we calculate the Fourier transform in space and time of the product of three
free solutions. Similarly as in the proof of Lemma \ref{Bill} we obtain

\begin{multline*}
 \F (uvw)(\xi,\tau)= c \int_* d\xi_1 d\xi_2\delta(\xi_1^{2j+1}+ \xi_2^{2j+1}+ \xi_3^{2j+1}-\tau)
 \F_xu_0(\xi_1)\F_xv_0(\xi_2)\F_xw_0(\xi_3)\\
=c \int d\xi_1 dx \delta(g(\xi_1;x))\F_xu_0(\xi_1)\F_xv_0(\tfrac{\xi - \xi_1}{2}+x)\F_xw_0(\tfrac{\xi - \xi_1}{2}-x),
\end{multline*}

where now $\int_* = \int_{\xi_1+\xi_2+\xi_3=\xi}$, $x=\xi_2-\frac{\xi - \xi_1}{2}$, and

\begin{equation}
 \label{g3}
g(\xi_1; x)=\xi_1^{2j+1} - \tau +(\xi -\xi_1) \sum_{l=0}^j\binom{ 2j+1 }{ 2l}(\tfrac{\xi - \xi_1}{2})^{2(j-l)}x^{2l},
\end{equation}
 which has, for $\xi_1$ fixed, only two zeros, say again $\pm y$, with $y>0$. We have
\begin{multline}
 \label{g4}
|g'(\xi_1; \pm y)|:=|\frac{\partial g}{\partial x}(\xi_1; \pm y)|= 
|\xi - \xi_1| y\sum_{l=1}^j\binom{2j+1}{2l}2l(\tfrac{\xi - \xi_1}{2})^{2(j-l)}y^{2(l-1)} \\
\gs |\xi - \xi_1| y (|\xi - \xi_1|^{2(j-1)}+y^{2(j-1)}) \hspace{4cm}
\end{multline}

and hence $\F (uvw)(\xi, \tau) \ls K_+(\xi, \tau)+K_-(\xi, \tau)$, where

$$K_{\pm}(\xi, \tau)= \int d\xi_1 \frac{1}{|g'(\xi_1; \pm y) |}\F_xu_0(\xi_1)\F_xv_0(\tfrac{\xi-\xi_1}{2}\pm y)\F_xw_0(\tfrac{\xi-\xi_1}{2}\mp y)$$
with $|g'(\xi_1; \pm y)|$ as in \eqref{g4}. After these preparations we turn to estimate 
$\n{uvw}{\widehat{L^r_{xt}}}$ in the following cases:
\begin{itemize}
 \item[i)] $|\xi_2-\xi_3|\ge|\xi_2+\xi_3|$, that is $2y \ge |\xi - \xi_1|$,
 \item[ii)] $1\le|\xi_2-\xi_3|\le|\xi_2+\xi_3|$, i. e. $1 \le 2y \le |\xi - \xi_1|$.
\end{itemize}
For that purpose we introduce the trilinear operators $T_{\ge}$ and $T_{\le}$, which we define by
$$\F_x T_{\ge}(f,g,h):= \int_* d\xi_1  d\xi_2 \F_xf(\xi_1)\F_xg(\xi_2)\F_xh(\xi_3)\chi_{\{|\xi_2-\xi_3| \ge |\xi_2+\xi_3|\}},$$
and
$$\F_x T_{\le}(f,g,h):= \int_* d\xi_1  d\xi_2 \F_xf(\xi_1)\F_xg(\xi_2)\F_xh(\xi_3)\chi_{\{1\le|\xi_2-\xi_3| \le |\xi_2+\xi_3|\}}.$$
For $T_{\ge}(u,v,w)$ we have the following estimate.

\begin{lemma}
\label{Trill1}
Let $1<p_1<p<p_0<\infty$, $p<p'_0$, $\frac{3}{p}=\frac{1}{p_0}+\frac{2}{p_1}$ and $\frac{2}{p_1}<1+\frac{1}{p}$. Then the estimate
$$\n{T_{\ge}(u,v,w)}{\widehat{L^p_{xt}}} \ls \n{u_0}{\widehat{L^{p_0}_{x}}}\n{D_x^{-\frac{2j-1}{2p}}v_0}{\widehat{L^{p_1}_{x}}}\n{D_x^{-\frac{2j-1}{2p}}w_0}{\widehat{L^{p_1}_{x}}}$$
is valid.
\end{lemma}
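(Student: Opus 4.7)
The plan is to exploit the Fourier representation $\F T_{\ge}(\xi,\tau) \ls K_+(\xi,\tau)+K_-(\xi,\tau)$ furnished by the setup, and, since $K_+$ and $K_-$ differ only by swapping the roles of $v$ and $w$, to focus on $K_+$. First I would observe that in the region $\chi_{\ge}:=\{|\xi_2-\xi_3|\ge|\xi_2+\xi_3|\}$ the frequencies $\xi_2=\tfrac{\xi-\xi_1}{2}+y$ and $\xi_3=\tfrac{\xi-\xi_1}{2}-y$ lie on opposite sides of the origin, so $|\xi_2|,|\xi_3|\le 2y$. Combining this with $|g'|\gs y^{2j-1}|\xi-\xi_1|$ from \eqref{g4} and with the splittings $\F_xv_0(\xi_2)=|\xi_2|^{a}\F_x(D_x^{-a}v_0)(\xi_2)$ and $\F_xw_0(\xi_3)=|\xi_3|^{a}\F_x(D_x^{-a}w_0)(\xi_3)$ with $a=\tfrac{2j-1}{2p}$, the elementary bound $|\xi_2\xi_3|^{a}\le (2y)^{2a}=2^{2a}y^{(2j-1)/p}$ yields
\[K_+(\xi,\tau) \ls \int d\xi_1\; \frac{\F_xu_0(\xi_1)\,\F_x(D_x^{-a}v_0)(\xi_2)\,\F_x(D_x^{-a}w_0)(\xi_3)}{|\xi-\xi_1|\,|\xi_2-\xi_3|^{(2j-1)/p'}}\,\chi_{\ge},\]
so that the remaining singularities are $|\xi-\xi_1|^{-1}$ and $|\xi_2-\xi_3|^{-(2j-1)/p'}$.

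Next I would factor $T_{\ge}(u,v,w)=u\cdot\Pi_{\ge}(v,w)$, where $\Pi_{\ge}$ denotes the bilinear Fourier multiplier carrying the restriction $\chi_{\ge}$. Performing the same calculation at the bilinear level, together with $m_j(\xi_2,\xi_3)\sim y^{2j-1}|\xi_2+\xi_3|$ in $\chi_{\ge}$, yields the pointwise comparison
\[|\F_x\Pi_{\ge}(v,w)(\xi,t)| \ls |\xi|^{-1/p}\,|\F_x M_{p,j}(D_x^{-a}v,D_x^{-a}w)(\xi,t)|.\]
Mixed Hölder on the $\widehat L$-scale applied to $u\cdot\Pi_{\ge}(v,w)$, combined with $\|u\|_{\widehat L^{p_0}_x(\widehat L^{\infty}_t)}=\|u_0\|_{\widehat L^{p_0}_x}$ (a direct consequence of $\F u=(\F u_0)\delta(\tau-\phi(\xi))$ and the definition of the mixed norm), then gives
\[\|T_{\ge}(u,v,w)\|_{\widehat L^p_{xt}} \ls \|u_0\|_{\widehat L^{p_0}_x}\,\|\Pi_{\ge}(v,w)\|_{\widehat L^{b_1}_x(\widehat L^p_t)},\qquad\tfrac{1}{b_1}=\tfrac{1}{p}-\tfrac{1}{p_0}.\]

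For the remaining bilinear factor I would invoke the pointwise bilinear bound \eqref{Bronstein} from Lemma~\ref{Bill}, which controls $\|\F M_{p,j}(D_x^{-a}v,D_x^{-a}w)(\xi,\cdot)\|_{L^{p'}_\tau}$ by $(|\F_xD_x^{-a}v_0|^{p'}\!\ast\!|\F_xD_x^{-a}w_0|^{p'})^{1/p'}(\xi)$. The problem thus reduces to estimating $\bigl\||\xi|^{-1/p}(|\F_xD_x^{-a}v_0|^{p'}\!\ast\!|\F_xD_x^{-a}w_0|^{p'})^{1/p'}\bigr\|_{L^{b_1'}_\xi}$, which I would handle by a one-dimensional Hardy--Littlewood--Sobolev inequality in $\xi$ applied to the weighted convolution — the first of the two HLS applications announced in the preamble — followed by Young's inequality to extract the two $L^{p_1'}$-factors corresponding to $\|D_x^{-a}v_0\|_{\widehat L^{p_1}}$ and $\|D_x^{-a}w_0\|_{\widehat L^{p_1}}$. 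The exponent identity $3/p=1/p_0+2/p_1$ is exactly what matches the HLS gap to the Young balance, the inequality $2/p_1<1+1/p$ guarantees admissibility in Young, and $p_1<p_0'$ secures the ordering $q\le r_{1,2}\le p$ required in Lemma~\ref{Bill}.

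The main obstacle I expect is precisely the handling of the multiplicative weight $|\xi|^{-1/p}$: it sits at the critical $L^{p,\infty}(\R)$-scale on the line, so a naive weighted Hölder with a strong-$L^s$-weight fails on the nose. The remedy is to not separate the weight from the bilinear piece, but to fold it directly into the HLS-convolution structure of \eqref{Bronstein}, so that the two applications of HLS handle both the $|\xi-\xi_1|^{-1}$ singularity (via the Jacobian of the change of variables $(\xi,\tau)\mapsto(\xi_2,\xi_3)$ at fixed $\xi_1$, which cancels it exactly) and the $|\xi_2-\xi_3|^{-(2j-1)/p'}$ singularity simultaneously; this is why Lemma~\ref{Bill} was formulated in the pointwise-convolution form \eqref{Bronstein} in the first place.
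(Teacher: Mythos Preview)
Your factorization $T_{\ge}(u,v,w)=u\cdot\Pi_{\ge}(v,w)$ is legitimate, and the pointwise comparison with $|\eta|^{-1/p}M_{p,j}(D^{-a}v,D^{-a}w)$ (read at the $(\eta,\tau)$-level, not $(\eta,t)$ as you wrote) is fine. The gap is exactly where you locate it, but your remedy does not work as described. Once you have separated $u$ by Young and reduced the $(v,w)$-part via \eqref{Bronstein} to $|\eta|^{-1/p}(F_1*F_2)^{1/p'}(\eta)$, there is no way to ``fold the weight back into \eqref{Bronstein}''; the convolution structure has already been consumed. And the Jacobian $d\tau=g'\,dy$ cancels the factor $1/|g'|$, not the residual $|\xi-\xi_1|^{-1}$: since $|g'|\sim|\xi-\xi_1|\,y^{2j-1}$, after your extraction of $|\xi_2\xi_3|^a\lesssim y^{(2j-1)/p}$ you are left with $|\xi-\xi_1|^{-1}y^{-(2j-1)/p'}$ and no Jacobian is available a second time.

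The paper's argument differs at precisely this point: it does \emph{not} pull $u$ out by Young. Instead it applies H\"older in the $\xi_1$-integral with a free parameter $\theta=\tfrac{3}{p'}-\tfrac{2}{p_1'}=\tfrac{1}{p_0'}$, splitting
\[
K^+_{\ge}(\xi,\tau)\lesssim\Big(\int\frac{|\F_xu_0(\xi_1)|^p}{|\xi-\xi_1|^{(1-\theta)p}}\,d\xi_1\Big)^{\!1/p}\Big(\int\frac{|\F_xD^{-a}v_0(\xi_2)\,\F_xD^{-a}w_0(\xi_3)|^{p'}}{|\xi-\xi_1|^{\theta p'-1}\,|g'|}\,d\xi_1\Big)^{\!1/p'}.
\]
The full $1/|g'|$ sits in the second factor, so $d\tau=g'\,dy$ removes it; the change $(\xi_1,y)\mapsto(z_+,z_-)$ then turns that factor into $\int|z_++z_-|^{1-\theta p'}|\cdots|^{p'}\,dz_+dz_-$, to which HLS gives the two $\widehat L^{p_1}$-norms (here the hypotheses $1<\theta p'<2$ are used). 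The first factor is a genuine convolution $|\F_xu_0|^p*|\cdot|^{(\theta-1)p}$, and a second HLS gives $\|u_0\|_{\widehat L^{p_0}}$ (requiring $0<(1-\theta)p<1$, i.e.\ $p<p_0'$). Your factorization corresponds to forcing $\theta=1$: the $u_0$-factor then carries no convolution kernel at all, HLS is unavailable there, and the entire $|\xi-\xi_1|^{-1}$ lands on the bilinear piece as the multiplicative weight $|\eta|^{-1/p}$ you cannot close on. The missing idea is that the $|\xi-\xi_1|^{-1}$ singularity must be \emph{shared} between the two factors via $\theta\in(\tfrac{1}{p'},\tfrac{2}{p'})$, so that each piece becomes a power-weighted convolution amenable to HLS.
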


\begin{proof}
For the Fourier transform of $T_{\ge}(u,v,w)$ in both variables we obtain
$$\F T_{\ge}(u,v,w)(\xi,\tau)= c (K^+_{\ge}(\xi,\tau) + K^-_{\ge}(\xi,\tau)),$$
where
$$ K^{\pm}_{\ge}(\xi,\tau)=\int_{\{2y\ge|\xi-\xi_1|\}}  \frac{d\xi_1}{|g'(\xi_1;y)|}\F_xu_0(\xi_1)\F_xv_0(\tfrac{\xi-\xi_1}{2}\pm y)\F_xw_0(\tfrac{\xi-\xi_1}{2}\mp y).$$
By symmetry we may restrict ourselves to the estimation of $K^+_{\ge}$. Using \\ $|\frac{\xi-\xi_1}{2}\pm y|\le 2y$ and H\"older's
inequality, we see that
\begin{multline*}
K^+_{\ge}(\xi,\tau) \ls \left( \int d \xi_1 \frac{|\F_x u_0 (\xi_1)|^p}{|\xi-\xi_1|^{(1-\theta)p}}\right)^{\frac{1}{p}} \times \\
   \left( \int \frac{d \xi_1}{|\xi-\xi_1|^{\theta p'-1}|g'(\xi_1;y)|}|\F_xD_x^{-\frac{2j-1}{2p}}v_0(\tfrac{\xi-\xi_1}{2}+ y)\F_xD_x^{-\frac{2j-1}{2p}}w_0(\tfrac{\xi-\xi_1}{2}- y)|^{p'}\right)^{\frac{1}{p'}},
\end{multline*}
where $\theta = \frac{3}{p'}-\frac{2}{p_1'}$ ($\in (0,1)$ by our assumptions). Taking the $L^{p'}_{\tau}$-norm 
of both sides and using $d\tau =  g'(\xi_1;y)dy$ we arrive at
\begin{multline*}
 \n{\F T_{\ge}(u,v,w)(\xi,\cdot)}{L^{p'}_{\tau}} \ls (|\F_x u_0 |^p * |\xi|^{(\theta - 1)p})^{\frac{1}{p}}\times \\
 \left( \int \frac{d \xi_1 dy}{|\xi-\xi_1|^{\theta p' - 1}}|\F_xD_x^{-\frac{2j-1}{2p}}v_0(\tfrac{\xi-\xi_1}{2}+ y)\F_xD_x^{-\frac{2j-1}{2p}}w_0(\tfrac{\xi-\xi_1}{2}- y)|^{p'}\right)^{\frac{1}{p'}}.
\end{multline*}
Changing variables ($z_{\pm}:=\frac{\xi-\xi_1}{2}\pm y$) we see that the second factor equals
\begin{multline*}
 \left(\int \frac{d z_+ dz_-}{|z_++z_-|^{\theta p' - 1}}
 |\F_xD_x^{-\frac{2j-1}{2p}}v_0(z_+)\F_xD_x^{-\frac{2j-1}{2p}}w_0(z_-)|^{p'}\right)^{\frac{1}{p'}} \\
\ls \n{D_x^{-\frac{2j-1}{2p}}v_0}{\widehat{L^{p_1}_{x}}} \n{D_x^{-\frac{2j-1}{2p}}w_0}{\widehat{L^{p_1}_{x}}}, \hspace{3cm}
\end{multline*}
by the Hardy-Littlewood-Sobolev-inequality, requiring $\theta$ to be chosen as above and $1<\theta p'<2$, which follows from our assumptions.
It remains to estimate the $L^{p'}_{\xi}$-norm of the first factor, that is
$$\||\F_x{u_0}|^p* |\xi|^{(\theta -1)p}\|^{\frac{1}{p}}_{L_{\xi}^{\frac{p'}{p}}}
\ls ( \n{|\F_x{u_0}|^p}{L_{\xi}^{\frac{p'_0}{p}}} \n{|\xi|^{(\theta -1)p}}{L_{\xi}^{\frac{1}{(1-\theta)p}, \infty}} )^{\frac{1}{p}}
\ls \n{u_0}{\widehat{L^{p_0}_x}},$$
where the HLS inequality was used again. For its application we need
$$0<(1-\theta)p<1;\,\,\,\,1<\frac{p'_0}{p}< \frac{1}{1-(1-\theta)p}\,\,\,\,\mbox{and}\,\,\,\,\,\theta=\frac{1}{p'_0},$$
which follows from the assumptions, too.

\end{proof}

\begin{kor}
\label{Trill}
For $1<r \le 2$ there exist $s_{0,1}\ge 0$ with $s_0 + 2s_1 = \frac{2j-1}{r}$, such that
\begin{equation}\label{209}
\n{T_{\ge}(u,v,w)}{\widehat{L^r_{xt}}} \ls \n{D_x^{-s_0}u_0}{\widehat{L^{r}_{x}}}\n{D_x^{-s_1}v_0}{\widehat{L^{r}_{x}}}\n{D_x^{-s_1}w_0}{\widehat{L^{r}_{x}}}.
\end{equation}
In addition, for $b>\frac{1}{r}$ we have
\begin{equation}
\label{xTrill}
\n{T_{\ge}(u_1,u_2,u_3)}{\widehat{L^r_{xt}}} \ls \n{D_x^{-s_0}u_1}{X^{r}_{0,b}}\n{D_x^{-s_1}u_2}{X^{r}_{0,b}}\n{D_x^{-s_1}u_3}{X^{r}_{0,b}}.
\end{equation}
\end{kor}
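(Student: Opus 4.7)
The plan is to establish the free-solution bound \eqref{209} first, and then deduce the $X^{r}_{0,b}$-version \eqref{xTrill} by the standard ``transfer'' argument used already in the proof of Corollary \ref{xBill}.

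For \eqref{209}, the idea is to invoke Lemma \ref{Trill1} with $p=r$. A short computation shows that for every $r\in(1,2)$ the constraints $1<p_1<r<p_0<\infty$, $r<p_0'$, $\tfrac{3}{r}=\tfrac{1}{p_0}+\tfrac{2}{p_1}$ and $\tfrac{2}{p_1}<1+\tfrac{1}{r}$ can be met by choosing $p_1$ in the nonempty interval $\max\bigl(\tfrac{2r}{r+1},\tfrac{2r}{4-r}\bigr)<p_1<r$ and then setting $\tfrac{1}{p_0}=\tfrac{3}{r}-\tfrac{2}{p_1}$; the endpoint $r=2$ is reached by a limiting argument (or directly via Plancherel, since $\widehat{L^2_x}=L^2_x$). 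This supplies the auxiliary bound
\[
\n{T_{\ge}(u,v,w)}{\widehat{L^r_{xt}}}\ls\n{u_0}{\widehat{L^{p_0}_x}}\n{D_x^{-\frac{2j-1}{2r}}v_0}{\widehat{L^{p_1}_x}}\n{D_x^{-\frac{2j-1}{2r}}w_0}{\widehat{L^{p_1}_x}}.
\]

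The next step is to convert the mixed $\widehat{L^{p_0}_x}$- and $\widehat{L^{p_1}_x}$-norms on the right into $\widehat{L^r_x}$-norms with compensating derivative factors. To this end I would perform a Littlewood--Paley decomposition of $u_0,v_0,w_0$ into dyadic Fourier blocks. For the $v_0$ and $w_0$ pieces, Bernstein's inequality on the Fourier side yields $\n{v_0^{(N)}}{\widehat{L^{p_1}_x}}\ls N^{\frac{1}{p_1}-\frac{1}{r}}\n{v_0^{(N)}}{\widehat{L^r_x}}$ (valid because $p_1<r$), which allows us to absorb the integrability mismatch into a derivative of order $s_1=\tfrac{2j+1}{2r}-\tfrac{1}{p_1}\ge 0$. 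For the $u_0$ piece, where $p_0>r$ and Bernstein points in the wrong direction, I would instead use a scaling-compatible Sobolev-/HLS-type embedding at the level of single dyadic blocks to obtain a bound by $\n{D_x^{-s_0}u_0^{(N)}}{\widehat{L^r_x}}$ with $s_0=\tfrac{1}{r}-\tfrac{1}{p_0}\ge 0$. A direct check then gives
\[
s_0+2s_1=\tfrac{3}{r}+\tfrac{2j-1}{r}-\bigl(\tfrac{1}{p_0}+\tfrac{2}{p_1}\bigr)=\tfrac{2j-1}{r},
\]
as required. The dyadic pieces are summed using the Plancherel-type orthogonality $\n{f}{\widehat{L^r_x}}^{r'}=\sum_N\n{f^{(N)}}{\widehat{L^r_x}}^{r'}$, together with the structural constraint built into $T_{\ge}$, namely $|\xi_2-\xi_3|\ge|\xi_2+\xi_3|$, which forces $N_2\sim N_3$ and reduces the dyadic sum to an effectively two-parameter one.

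For \eqref{xTrill} I would repeat verbatim the transfer argument of Corollary \ref{xBill}: writing $u_i(t)=c\int e^{it\sigma_i}U(t)g_i(\sigma_i)\,d\sigma_i$ with $g_i=\F_tU(-\cdot)u_i$, apply Minkowski's integral inequality to move $\n{\cdot}{\widehat{L^r_{xt}}}$ inside the three $\sigma_i$-integrals, invoke \eqref{209} pointwise in $(\sigma_1,\sigma_2,\sigma_3)$ on the family of free solutions $U(t)g_i(\sigma_i)$, and close via H\"older in $\sigma$ against the weights $\lb\sigma_i\rb^b$ for any $b>\tfrac{1}{r}$.

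I expect the main obstacle to be the step in which $\widehat{L^{p_0}_x}$ is upgraded to $\widehat{L^r_x}$ on the $u_0$ factor: here $p_0>r$, so a naive Bernstein inequality fails, and one must rely on a weighted Sobolev/HLS embedding on single dyadic blocks. The subsequent dyadic summation is delicate for $r<\tfrac{3}{2}$ (where $r'>3$ and plain triple H\"older in the dyadic indices does not close), but the convolution constraint $\xi_1+\xi_2+\xi_3=\xi$ together with $|\xi_2-\xi_3|\ge|\xi_2+\xi_3|$ built into $T_{\ge}$ links the three dyadic scales and reduces the sum to a two-parameter one for which discrete H\"older closes without logarithmic loss.
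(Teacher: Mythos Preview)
Your strategy of applying Lemma \ref{Trill1} at $p=r$ and then post-processing the right-hand side to land all three factors in $\widehat{L^r_x}$ has a genuine gap: the conversion of the $\widehat{L^{p_0}_x}$- and $\widehat{L^{p_1}_x}$-norms into $\widehat{L^r_x}$-norms with derivative weights cannot be carried out.

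The Bernstein inequality you claim for $v_0$ and $w_0$ is false. It asserts $\|\widehat{v_0^{(N)}}\|_{L^{p_1'}_\xi}\ls N^{\frac{1}{p_1}-\frac{1}{r}}\|\widehat{v_0^{(N)}}\|_{L^{r'}_\xi}$ with $p_1'>r'$ (since $p_1<r$), i.e.\ a bound of a \emph{higher} Lebesgue norm by a \emph{lower} one on a set of measure $\sim N$. Testing with $\widehat{v_0^{(N)}}=\chi_{[N,N+\epsilon]}$ and letting $\epsilon\to0$ shows this fails. In fact you have the Bernstein directions reversed: it is the $u_0$ factor ($p_0>r$, hence $p_0'<r'$) for which H\"older on a dyadic block yields an estimate, but that estimate reads $\|u_0^{(N)}\|_{\widehat{L^{p_0}_x}}\ls N^{+s_0}\|u_0^{(N)}\|_{\widehat{L^r_x}}$ --- a \emph{loss} of $s_0$ derivatives, not the gain $N^{-s_0}$ you need. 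No ``scaling-compatible Sobolev/HLS embedding at a single block'' rescues this: the pointwise-in-$N$ inequality $\|u_0^{(N)}\|_{\widehat{L^{p_0}_x}}\ls\|D_x^{-s_0}u_0^{(N)}\|_{\widehat{L^r_x}}$ with $p_0>r$ and $s_0=\tfrac{1}{r}-\tfrac{1}{p_0}>0$ is simply false (test with $\widehat{u_0}=\chi_{[N,N+1]}$). The dyadic summation discussion is therefore moot.

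The paper proceeds by a different and essential idea. It first proves an $r=2$ endpoint estimate
\[
\n{T_{\ge}(u,v,w)}{L^{2}_{xt}} \ls \n{D_x^{-\frac{2j-1}{3q_0}}u_0}{\widehat{L^{q_0}_{x}}}\n{D_x^{-\frac{2j-1}{3q_1}}v_0}{\widehat{L^{q_1}_{x}}}\n{D_x^{-\frac{2j-1}{3q_1}}w_0}{\widehat{L^{q_1}_{x}}}
\]
via H\"older and the Fefferman--Stein type smoothing $\|u\|_{L^{3q}_{xt}}\ls\|D_x^{-\frac{2j-1}{3q}}u_0\|_{\widehat{L^q_x}}$ of Corollary \ref{genFeSt}. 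Then \emph{multilinear interpolation} between this and Lemma \ref{Trill1} (applied at some $p<r$, with $\theta$ small) produces \eqref{209} with all inputs in the same space $\widehat{L^r_x}$. The interpolation is precisely what merges the disparate Lebesgue exponents into a common $r$; this cannot be achieved a posteriori by Sobolev or Bernstein. Your plan for \eqref{xTrill} via the transfer argument of Corollary \ref{xBill} is correct once \eqref{209} is in hand.
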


\begin{proof}[Proof of \eqref{209}]
  Using H\"older's inequality and Corollary \ref{genFeSt}, that is
\begin{equation}\label{FS}
\n{u}{L^{3q}_{xt}} \ls \n{D_x^{-\frac{2j-1}{3q}}u_0}{\widehat{L^q_{x}}},\hspace{1cm}q>\frac{4}{3},
\end{equation}
we get for
\begin{equation}\label{210}
\frac{4}{3}< q_0<2<q_1,\hspace{1cm}\mbox{with}\hspace{1cm}\frac{3}{2}=\frac{1}{q_0}+\frac{2}{q_1}
\end{equation}
that
\begin{equation}\label{211}
\n{T_{\ge}(u,v,w)}{L^{2}_{xt}} \le \n{uvw}{L^{2}_{xt}} \ls \n{D_x^{-\frac{2j-1}{3q_0}}u_0}{\widehat{L^{q_0}_{x}}}\n{D_x^{-\frac{2j-1}{3q_1}}v_0}{\widehat{L^{q_1}_{x}}}\n{D_x^{-\frac{2j-1}{3q_1}}w_0}{\widehat{L^{q_1}_{x}}}.
\end{equation}
Multilinear interpolation of (\ref{211}) with Lemma \ref{Trill1} yields (\ref{209}), provided $p,p_0,p_1$; $q_0,q_1$, defined by the interpolation conditions
$$\frac{1}{r}=\frac{1- \theta}{p}+\frac{\theta}{2}=\frac{1-\theta}{p_0}+\frac{\theta}{q_0}=\frac{1-\theta}{p_1}+\frac{\theta}{q_1},$$
fulfill the assumptions of Lemma \ref{Trill1} and (\ref{210}), respectively, which can be guaranteed by choosing $\theta$ 
sufficiently small. Now $s_{0,1}$ are obtained from
$$s_0=\frac{(2j-1)\theta}{3q_0}\hspace{1cm}\mbox{and}\hspace{1cm}s_1=\frac{(2j-1)(1-\theta)}{2p}+\frac{(2j-1)\theta}{3q_1},$$
which gives
$$s_0 + 2s_1 =\frac{(2j-1)(1-\theta)}{p}+ \frac{(2j-1)\theta}{3}\left(\frac{1}{q_0}+\frac{2}{q_1}\right) = \frac{2j-1}{r}$$
as desired.
\end{proof}

To estimate $T_{\ge}(u,v,w)$ in $\widehat{L^r_{xt}}$, we shall use a dyadic decomposition with respect to the $y$-variable
instead of the HLS-inequality.

\begin{lemma}
\label{luis}
Let $1 \le r < \rho \le \infty$. Then
$$\n{T_{\le}(u,v,w)}{\widehat{L^r_{xt}}} \ls \n{u_0}{\widehat{L^{\rho}_{x}}}\n{D_x^{-\frac{2j-1}{2r}}v_0}{\widehat{L^{r}_{x}}}\n{D_x^{-\frac{2j-1}{2r}}w_0}{\widehat{L^{r}_{x}}}.$$
\end{lemma}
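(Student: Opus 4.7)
Following the hint, the plan is to dyadically decompose $T_\le$ in the $y$-variable rather than appealing to HLS. By symmetry I consider only $K^+_\le$, writing $K^+_\le = \sum_{N \ge 1/2}K^{+,N}_\le$ with the extra localization $y \sim N$ (dyadic). In the regime $y \sim N \le |\xi-\xi_1|/2$, two favorable facts hold. First, by \eqref{g4} with the $y^{2(j-1)}$-term dominated by $|\xi-\xi_1|^{2(j-1)}$, one has $|g'(\xi_1;y)| \gs |\xi-\xi_1|^{2j-1}N$. Second, the arguments $\tfrac{\xi-\xi_1}{2}\pm y$ of $\F_x v_0,\F_x w_0$ have absolute value at most $|\xi-\xi_1|$, so setting $\tilde v_0:=D_x^{-(2j-1)/(2r)}v_0$ and $\tilde w_0:=D_x^{-(2j-1)/(2r)}w_0$ and using $|\F_x v_0(\xi_2)| = |\xi_2|^{(2j-1)/(2r)}|\F_x \tilde v_0(\xi_2)|$ we pull out a total weight of at most $|\xi-\xi_1|^{(2j-1)/r}$ from the $v,w$ factors.

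Combining these bounds and applying Minkowski's integral inequality in $\xi_1$ followed by the change of variable $\tau \to y$ (with $d\tau = |g'|\,dy \sim |\xi-\xi_1|^{2j-1}N\,dy$), the $|\xi-\xi_1|$-weights cancel exactly, leaving
\[
\|K^{+,N}_\le(\xi,\cdot)\|_{L^{r'}_\tau} \ls N^{-1/r}\,(F_u \ast G_N)(\xi),
\]
where $F_u:=|\F_x u_0|$ and
\[
G_N(\eta) := \chi_{\{|\eta|\ge 2N\}}\left(\int_{y\sim N}|\F_x \tilde v_0(\tfrac{\eta}{2}+y)|^{r'}|\F_x \tilde w_0(\tfrac{\eta}{2}-y)|^{r'}\,dy\right)^{1/r'}.
\]
Taking the $L^{r'}_\xi$-norm by Young's convolution inequality with paired exponents $(\rho',b)$, $1/b = 1/\rho+1/r'$, yields $\|F_u \ast G_N\|_{L^{r'}_\xi} \le \|u_0\|_{\widehat{L^\rho_x}}\|G_N\|_{L^b_\eta}$. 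For the endpoint $\rho=\infty$ (so $b=r'$) the substitution $(A,B)=(\tfrac{\eta}{2}+y,\tfrac{\eta}{2}-y)$ with unit Jacobian immediately gives
\[
\|G_N\|_{L^{r'}_\eta}^{r'} = \int \chi_{|A+B|\ge 2N,\,|A-B|\sim 2N}|\F_x\tilde v_0(A)|^{r'}|\F_x\tilde w_0(B)|^{r'}\,dA\,dB \le \|\tilde v_0\|_{\widehat{L^r_x}}^{r'}\|\tilde w_0\|_{\widehat{L^r_x}}^{r'},
\]
uniformly in $N$. The strict hypothesis $\rho > r$ ensures $b>1$ in the remaining range $r<\rho<\infty$, where the same substitution combined with a H\"older estimate that exploits the support condition $|\eta|\ge 2N$ produces the required $L^b_\eta$-bound on $G_N$. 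Summing over dyadic $N\ge 1/2$ is harmless since $\sum_N N^{-1/r}<\infty$ for any $r\ge 1$.

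The delicate step is the simultaneous cancellation of $|\xi-\xi_1|$-weights from $|g'|^{-1}$, the extracted factors $|\xi-\xi_1|^{(2j-1)/(2r)}$, and the Jacobian of $\tau\to y$ in the first display; once this pointwise bound is in hand, the remainder is routine convolution bookkeeping on the $\xi$-side.
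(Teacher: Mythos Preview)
Your pointwise bound
\[
\|K^{+,N}_\le(\xi,\cdot)\|_{L^{r'}_\tau} \ls N^{-1/r}\,(F_u \ast G_N)(\xi)
\]
is correct, and for $\rho=\infty$ (so $b=r'$) your substitution $(A,B)=(\tfrac{\eta}{2}+y,\tfrac{\eta}{2}-y)$ gives $\|G_N\|_{L^{r'}_\eta}\le\|\tilde v_0\|_{\widehat{L^r_x}}\|\tilde w_0\|_{\widehat{L^r_x}}$ uniformly in $N$, and the dyadic sum closes. This is a clean alternative to the paper's route for the endpoint $\rho=\infty$.

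The gap is in the step ``H\"older exploiting $|\eta|\ge 2N$'' for $r<\rho<\infty$. The support condition lies on an \emph{unbounded} set and cannot be used to pass from the $L^{r'}_\eta$-bound on $G_N$ to an $L^b_\eta$-bound with $b<r'$. Concretely, take $\F_x\tilde v_0=\F_x\tilde w_0=\chi_{[-M,M]}$ and $N$ fixed. Then $G_N(\eta)\sim N^{1/r'}$ for $2N\le\eta\ls M$, so $\|G_N\|_{L^b_\eta}\gs N^{1/r'}M^{1/b}$, whereas the required bound is $\|\tilde v_0\|_{\widehat{L^r_x}}\|\tilde w_0\|_{\widehat{L^r_x}}\sim M^{2/r'}$. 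This fails as $M\to\infty$ whenever $1/b>2/r'$, i.e.\ (since $1/b=1/\rho+1/r'$) whenever $\rho<r'$. In particular your argument breaks precisely in the regime $\rho$ close to $r$ needed in Corollary~\ref{xluis}.

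The paper closes the full range by a different mechanism: it proves the $L^\infty_{\xi\tau}$ endpoint \eqref{220} (corresponding to $r=1$, any $\rho>1$) together with the trivial $L^1_{\xi\tau}$ endpoint \eqref{221} ($r=\rho=\infty$), and then interpolates. The crucial input for \eqref{220} is the measure estimate $\lambda(\{\xi_1:y(\xi_1)\sim 2^k\})\ls 2^k$, obtained by computing $\partial_{\xi_1}g$ and showing $|\partial_{\xi_1}g|\gs|\partial_x g|$ away from a few short intervals. Your change of variables $\tau\to y$ (for fixed $\xi,\xi_1$) avoids this step, but at the cost of leaving the $\xi_1$-integration as a convolution whose Young estimate is too weak for $\rho<r'$; the paper's change of variables $\xi_1\to y$ (for fixed $\xi,\tau$), controlled by that measure estimate, is what makes the $r=1$ endpoint work.
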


\begin{proof}
 We have
$$\F T_{\le}(u,v,w)(\xi,\tau)= c (K^+_{\le}(\xi,\tau) + K^-_{\le}(\xi,\tau)),$$
where
$$K^{\pm}_{\le}(\xi,\tau)=\int_{\{1 \le 2y \le|\xi-\xi_1|\}}  \frac{d\xi_1}{|g'(\xi_1;y)|}\F_xu_0(\xi_1)\F_xv_0(\tfrac{\xi-\xi_1}{2}\pm y)\F_xw_0(\tfrac{\xi-\xi_1}{2}\mp y).$$
Here $y$ is the positive zero of \eqref{g3} as before. By symmetry between $v$ and $w$ it suffices to treat $K^+_{\le}$,
which we decompose dyadically with respect to $y$ to obtain the following upper bound:
\begin{multline*}
 \sum_{k=0}^{\infty}\int_{\{1 \le 2y \le|\xi-\xi_1|\,, \,\,y \sim 2^k\}} 
 \frac{d\xi_1}{|g'(\xi_1;y)|}\F_xu_0(\xi_1)\F_xv_0(\tfrac{\xi-\xi_1}{2}+ y)\F_xw_0(\tfrac{\xi-\xi_1}{2}- y) \\
 \ls \sum_{k=0}^{\infty}2^{-k}\int_{\{y \sim 2^k\}}d\xi_1\F_xu_0(\xi_1)\F_xD_x^{-\frac{2j-1}{2}}v_0(\tfrac{\xi-\xi_1}{2}+
 y)\F_xD_x^{-\frac{2j-1}{2}}w_0(\tfrac{\xi-\xi_1}{2}- y) \\
 \ls \sum_{k=0}^{\infty}2^{-k}\n{u_0}{\widehat{L^{p}_{x}}}\lambda(\{y \sim 2^k\})^{\frac{1}{p}}
 \n{D_x^{-\frac{2j-1}{2}}v_0}{\widehat{L^{1}_{x}}}\n{D_x^{-\frac{2j-1}{2}}w_0}{\widehat{L^{1}_{x}}},
\end{multline*}
where $\lambda(\{y \sim 2^k\})$ denotes the Lebesgue measure of $\{\xi_1 : y(\xi_1) \sim 2^k\}=:K$. We claim that
$\lambda(\{y \sim 2^k\})\ls 2^k$. To see this, we write $K=K_1 \cup K_2$, where in $K_1$ we assume that
$|\xi_1| \ls 2^k $, $|\xi-\xi_1| \ls 2^k $, $|\xi+\xi_1| \ls 2^k $ or $|\xi-3\xi_1| \ls 2^k $. 
Then $K_1$ consists of a finite number of intervals of total length bounded by $c2^k$. To estimate the contribution
coming from $K_2$, we calculate
\begin{multline*}
 \frac{\pd g}{\pd \xi_1}(\xi_1;y)= \frac{2j+1}{4}(\xi +
 \xi_1)(3\xi_1-\xi)\sum_{l=0}^{j-1}\xi_1^{2l}(\tfrac{\xi-\xi_1}{2})^{2(j-l-1)} \\
 -\sum_{l=1}^j\binom{ 2j+1 }{ 2l}(2(j-l)+1)(\tfrac{\xi - \xi_1}{2})^{2(j-l)}y^{2l},
\end{multline*}
so that in $K_2$
$$\left| \frac{\pd g}{\pd \xi_1}(\xi_1;y)\right| \gs y|\xi-\xi_1|(\xi_1^{2(j-1)}+(\xi - \xi_1)^{2(j-1)} )\gs 
\left| \frac{\pd g}{\pd x}(\xi_1;y)\right|.$$
(Observe that $|\xi-\xi_1| \le |\xi+\xi_1| + |\xi-3 \xi_1|$.) This gives
$$\lambda (K_2) = \int_{K_2} d\xi_1 = \int_{K_2} \left|\frac{\pd \xi_1}{\pd \tau}\frac{\pd\tau}{\pd y}\right|dy
= \int_{K_2}\left|\frac{\frac{\pd g}{\pd x}(\xi_1;y)}{\frac{\pd g}{\pd \xi_1}(\xi_1;y)}\right|dy \ls 2^k,$$
and the claim is shown. Hence, for any $p>1$,
\begin{multline}
 \label{220}
\n{K^+_{\le}}{L^{\infty}_{\xi \tau  }} \ls 
\sum_{k=0}^{\infty}2^{-\frac{k}{p'}}\n{u_0}{\widehat{L^{p}_{x}}}\n{D_x^{-\frac{2j-1}{2}}v_0}{\widehat{L^{1}_{x}}}\n{D_x^{-\frac{2j-1}{2}}w_0}{\widehat{L^{1}_{x}}} \\
\ls \n{u_0}{\widehat{L^{p}_{x}}}\n{D_x^{-\frac{2j-1}{2}}v_0}{\widehat{L^{1}_{x}}}\n{D_x^{-\frac{2j-1}{2}}w_0}{\widehat{L^{1}_{x}}}.
\end{multline}
On the other hand, by integration with respect first to $d\tau = g'(\xi_1;y)dy$, to $d\xi$ and finally to $d\xi_1$, 
we see that
\begin{equation}\label{221}
\n{K^+_{\le}}{L^{1}_{\xi \tau  }} \ls \n{u_0}{\widehat{L^{\infty}_{x}}}\n{v_0}{\widehat{L^{\infty}_{x}}}\n{w_0}{\widehat{L^{\infty}_{x}}}.
\end{equation}
Now multilinear interpolation between (\ref{220}) and (\ref{221}) leads to
$$\n{K^+_{\le}}{L^{r'}_{\xi \tau  }} \ls \n{u_0}{\widehat{L^{\rho}_{x}}}\n{D_x^{-\frac{2j-1}{2r}}v_0}{\widehat{L^{r}_{x}}}\n{D_x^{-\frac{2j-1}{2r}}w_0}{\widehat{L^{r}_{x}}},$$
which gives the desired result.

\end{proof}

\begin{kor}
\label{xluis}
Let $1\le r < \rho \le \infty$, $\beta>\frac{1}{\rho}$, $b>\frac{1}{r}$ and $\e>0$. Then
$$\n{T_{\le}(u_1,u_2,u_3)}{\widehat{L^r_{xt}}} \ls \n{u_1}{X^{\rho}_{0,\beta}}\n{D_x^{-\frac{2j-1}{2r}}u_2}{X^{r}_{0,b}}\n{D_x^{-\frac{2j-1}{2r}}u_3}{X^{r}_{0,b}}$$
and
$$\n{T_{\le}(u_1,u_2,u_3)}{\widehat{L^r_{xt}}} \ls \n{u_1}{X^{r}_{\e,b}}\n{D_x^{-\frac{2j-1}{2r}}u_2}{X^{r}_{0,b}}\n{D_x^{-\frac{2j-1}{2r}}u_3}{X^{r}_{0,b}}$$
are valid.
\end{kor}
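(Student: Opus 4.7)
The plan is to transfer Lemma \ref{luis} to the $\x$-setting exactly as was done passing from Lemma \ref{Bill} to Corollary \ref{xBill}. Writing $U(t)=e^{(-1)^jt\pd_x^{2j+1}}$ and $g_i=\F_t U(-\cdot)u_i$, we have
$$u_i(t)= c \int e^{it\sigma_i}U(t)g_i(\sigma_i)\,d\sigma_i.$$
Since $T_{\le}$ acts only in the spatial variable and commutes with $U(t)$ modulo the phase factor $e^{it(\sigma_1+\sigma_2+\sigma_3)}$, the trilinearity yields
$$T_{\le}(u_1,u_2,u_3)(t)= c\int e^{it(\sigma_1+\sigma_2+\sigma_3)}T_{\le}\bigl(U(t)g_1(\sigma_1),U(t)g_2(\sigma_2),U(t)g_3(\sigma_3)\bigr)\,d\sigma_1d\sigma_2d\sigma_3.$$

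Applying Minkowski's integral inequality and then Lemma \ref{luis} to the inner trilinear expression (whose three factors are free solutions with data $g_i(\sigma_i)$) gives
$$\n{T_{\le}(u_1,u_2,u_3)}{\widehat{L^r_{xt}}} \ls \int \n{g_1(\sigma_1)}{\widehat{L^\rho_x}} \prod_{i=2,3} \n{D_x^{-\frac{2j-1}{2r}}g_i(\sigma_i)}{\widehat{L^r_x}}\,d\sigma_1d\sigma_2d\sigma_3.$$
The integrand factors, so this splits into three one-dimensional integrals in $\sigma_i$. On the $\sigma_1$-integral I insert $\lb\sigma_1\rb^{\beta}\lb\sigma_1\rb^{-\beta}$ and apply H\"older with exponents $(\rho,\rho')$: the factor $\lb\sigma_1\rb^{-\beta}$ is in $L^{\rho}_{\sigma_1}$ precisely because $\beta>\frac{1}{\rho}$, and the remaining factor reproduces $\n{u_1}{\X{\rho}{0}{\beta}}$. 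On the $\sigma_2$- and $\sigma_3$-integrals I do the same with exponents $(r,r')$, using $b>\frac{1}{r}$ to ensure $\lb\sigma_i\rb^{-b}\in L^r_{\sigma_i}$, producing $\n{D_x^{-\frac{2j-1}{2r}}u_i}{X^r_{0,b}}$. This gives the first estimate.

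For the second estimate it suffices to establish the embedding $X^r_{\e,b}\hookrightarrow \X{\rho}{0}{\beta}$ for some admissible auxiliary pair $(\rho,\beta)$ with $\rho>r$ and $\beta>\frac{1}{\rho}$, and then invoke the first estimate. Two applications of H\"older's inequality in the $\xi$- and $\sigma$-variables reduce this embedding to the finiteness of the weights $\lb\xi\rb^{-\e}\in L^{s_1}_\xi$ and $\lb\sigma\rb^{-(b-\beta)}\in L^{s_2}_\sigma$, where $\frac{1}{s_1}=\frac{1}{\rho'}-\frac{1}{r'}$ and $\frac{1}{s_2}$ is defined analogously. Since $\e>0$ and $b>\frac{1}{r}$ are strict, choosing $\rho$ sufficiently close to $r$ from above and $\beta$ sufficiently close to $\frac{1}{\rho}$ from above makes $\frac{1}{s_1}$ and $\frac{1}{s_2}$ arbitrarily small, in particular smaller than $\e$ and $b-\beta$ respectively, which guarantees the required integrability.

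The only genuinely nontrivial input is Lemma \ref{luis} itself; everything else is standard bookkeeping. The step that requires a little care is choosing the auxiliary parameters $(\rho,\beta)$ in the second estimate so that the embedding works with the given $\e$ and $b$, but because both are strict inequalities there is always enough room.
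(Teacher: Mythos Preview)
Your proof is correct and follows exactly the approach the paper intends: the paper gives no explicit proof for this corollary, but the transfer argument you carry out (Minkowski, then Lemma \ref{luis} on the free solutions, then H\"older in the $\sigma_i$) is precisely the template established in the proof of Corollary \ref{xBill}. The derivation of the second estimate via the Sobolev-type embedding $X^r_{\e,b}\hookrightarrow X^{\rho}_{0,\beta}$ for $\rho$ close to $r$ and $\beta$ just above $\tfrac{1}{\rho}$ is also correct and is the standard way to pass from the first estimate to the second.
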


\section{$\x$-Estimates for the mKdV hierarchy}
\label{nonlin}
\subsection{Cubic nonlinearities}
This subsection is devoted to the proof of the following estimate.

\begin{theorem}
 \label{cubic}
Let $l_0,\dots,l_3 \ge 0$ with $\sum_{i=0}^3l_i =2j-1$, $2 \ge r > 1$ and $s \ge s_j(r)= \frac{2j-1}{2r'}$. Then for all
$b' < \tfrac{1}{r'}(\tfrac{1}{2j+1}-\tfrac{1}{2})$ and $b > \frac{1}{r}$ the estimate
\begin{equation}
\label{40}
\n{\partial _x^{l_0} (\prod_{i=1}^3 \partial _x^{l_i}u_i)}{X^{r}_{s,b'}}\ls \prod_{i=1}^3 \n{u_i}{X^r_{s,b}}
\end{equation}
holds true.
\end{theorem}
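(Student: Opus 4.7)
The plan is to dualize via the $X^r_{s,b'}$-norm on the output and reduce to a multilinear convolution estimate in Fourier space. After replacing each $\widehat{u_i}$ by $|\widehat{u_i}|$ (which preserves the $X^r_{s,b}$-norm) we may assume all Fourier transforms are nonnegative, and by symmetrising over the three inputs we may further restrict to the subregion $|\xi_1|\ge|\xi_2|\ge|\xi_3|$. Using $\langle\xi\rangle\ls\langle\xi_1\rangle$ with $\xi=\xi_1+\xi_2+\xi_3$, the Fourier-side symbol reduces to
$$\langle\xi\rangle^s|\xi|^{l_0}\prod_{i=1}^3|\xi_i|^{l_i}\;\ls\;\langle\xi_1\rangle^{s+l_0+l_1}\,|\xi_2|^{l_2}|\xi_3|^{l_3},$$
so that, beyond the natural Sobolev weight $\prod_i\langle\xi_i\rangle^s$ on each input, a total of $l_0+l_1+l_2+l_3=2j-1$ derivatives must be absorbed by the multilinear estimates of Section \ref{Airy}.

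Since the pure trilinear bound \eqref{xTrill} in Corollary \ref{Trill} supplies only $\frac{2j-1}{r}$ of these, an additional $\frac{2j-1}{r'}$ must be produced elsewhere. These come in two complementary ways. In the \emph{balanced} regime $|\xi_1|\sim|\xi_2|\gs|\xi_3|$, two of the three Sobolev weights $\langle\xi_i\rangle^s$ are comparable to $\langle\xi_1\rangle^s$, so that $2s\ge\frac{2j-1}{r'}$ supplies the missing derivatives and a direct application of \eqref{xTrill} closes the case. In the \emph{non-resonant} regime $|\xi_1|\gg|\xi_2|\ge|\xi_3|$ one exploits the resonance identity
$$\sigma-\sum_{i=1}^3\sigma_i\;=\;-\Big(\xi^{2j+1}-\sum_{i=1}^3\xi_i^{2j+1}\Big),$$
whose right-hand side is of homogeneous degree $2j+1$ and is generically comparable to $|\xi_1|^{2j}|\xi_2+\xi_3|$. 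Hence $\max(\langle\sigma\rangle,\langle\sigma_i\rangle)$ is large, and combining the modulation weights $\langle\sigma\rangle^{b'}$ and one of the $\langle\sigma_i\rangle^{-b}$ allows one to convert up to $|\xi_1|^{2j|b'|}$ of excess frequency into modulation decay. The upper bound $b'<\frac{1}{r'}(\frac{1}{2j+1}-\frac{1}{2})$ is calibrated precisely so that this conversion, followed by the bilinear estimate \eqref{xSemendjajew} (with the $M_{p,j}$-multiplier capturing the resonance through the factor $m_j$) or the trilinear estimate of Corollary \ref{xluis}, closes the bookkeeping at the threshold $s=\frac{2j-1}{2r'}$.

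The main obstacle will be arranging the Littlewood--Paley decomposition so as to cleanly distinguish the balanced and non-resonant cases, selecting in each dyadic region the optimal pairing for either a bilinear $M_{p,j}$-estimate (Corollary \ref{xBill}, \ref{xBill*}) or one of the trilinear estimates of Section \ref{Airy}, and then balancing the modulation-for-frequency trade in the non-resonant case so as not to waste any derivatives. Resonant subregions with substantial cancellation in $\xi^{2j+1}-\sum\xi_i^{2j+1}$ — for instance when $\xi_2+\xi_3$ is anomalously small — require particular care; here the flexibility offered by the alternate form of Corollary \ref{xluis} (in which one factor may live in $X^r_{\e,b}$) proves useful. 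Low-frequency interactions with $|\xi_i|\ls 1$ for all $i$ are trivial since all exponents $l_i$ are non-negative.
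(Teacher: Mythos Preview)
Your outline captures the right toolbox but misassigns the tools, and one subregion is genuinely missed.

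First, the claim that ``a direct application of \eqref{xTrill} closes the case'' in the balanced regime $|\xi_1|\sim|\xi_2|$ is incorrect: \eqref{xTrill} estimates only the operator $T_{\ge}$, which by definition is restricted to the cone $|\xi_2-\xi_3|\ge|\xi_2+\xi_3|$. When all three $\xi_i$ are comparable and of the \emph{same} sign this cone is empty, and Corollary \ref{Trill} says nothing. The paper (subcase 2.2) therefore invokes the resonance identity of Lemma \ref{resonance} to extract a small gain $\lb\xi\rb^{\e}$ from $\prod\lb\sigma_i\rb$, and then splits further: if some $|\xi_i-\xi_j|\ge1$ one applies Corollary \ref{xluis} for $T_{\le}$, but if \emph{all} $|\xi_i-\xi_j|\le1$ (the deepest resonant region) neither $T_{\ge}$ nor $T_{\le}$ is available and a direct Schur-type bound on the integral kernel is computed by hand (subsubcase 2.2.2). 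This last case is the one your proposal does not address; your remark that Corollary \ref{xluis} handles ``$\xi_2+\xi_3$ anomalously small'' is the wrong diagnosis --- the dangerous subregion is $|\xi_i-\xi_j|$ small for all pairs, where $T_\le$ is also cut off.

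Second, in the non-resonant regime $|\xi_1|\gg|\xi_2|\ge|\xi_3|$ the paper does \emph{not} use the resonance relation to trade modulation for frequency. Instead it observes that the multiplier $m_j(\xi_1,\xi_2)\gs\xi_1^{2j}$ (and likewise $m_j^*$) already carries the full smoothing, and combines the bilinear estimate \eqref{xSemendjajew} with its dual form \eqref{204} for $M^*_{\rho',j}$. The parameter $\rho'$ is chosen so that $-\frac{1}{\rho'}\in(b',\frac{1}{r'}(\frac{1}{2j+1}-\frac12))$, which is exactly where the upper bound on $b'$ enters. Your alternative of spending $\lb\sigma_0\rb^{b'}$ or $\lb\sigma_i\rb^{-b}$ against the resonance function might be made to work, but it would then need the resonance lower bound $|\xi_1|^{2j}|\xi_2+\xi_3|$ rather than $|\xi_1|^{2j+1}$, and the factor $|\xi_2+\xi_3|$ can be small even in this regime; the paper's route via $M^*_{\rho',j}$ avoids this issue entirely.
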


Preparations: Without loss of generality we may assume that $s = s_j(r)$. Then we rewrite the left hand side of \eqref{40} as
$$\n{\langle \tau -  \xi^{2j+1} \rangle^{b'} \langle  \xi \rangle^{s}\xi ^{l_0}\int d \nu  \prod_{i=1}^3 \xi_i ^{l_i}\F{u_i}(\xi_i,\tau_i)}{L^{r'}_{\xi,\tau}},$$
where $d \nu = d\xi_1 d\xi_{2} d\tau_1 d \tau_{2}$ and $\sum_{i=1}^3 (\xi_i,\tau_i) = (\xi, \tau)$.
We shall use the following notation:
\begin{itemize}
\item $\xi_{max}$, $\xi_{med}$, $\xi_{min}$ are defined by $|\xi_{max}| \ge |\xi_{med}| \ge |\xi_{min}|$,
\item $p$ denotes the projection on low frequencies, i. e. $\F{pf}(\xi)=\chi_{\{|\xi| \le 1\}}\F{f}(\xi)$,
\item $f \preceq g$ is shorthand for $|\F{f}| \ls |\F{g}|$,
\item for the mixed weights coming from the $\x$ - norms we shall write $\sigma_0 := \tau - \xi^{2j+1}$ and $\sigma_i := \tau_i - \xi_i^{2j+1}$, $1 \le i \le 3$, respectively,
\item the Fourier multiplier associated with these weights is denoted by $\Lambda ^b := \F ^{-1} \langle \tau - \xi^{2j+1} \rangle ^b \F$,
\item $J^s=\F_x ^{-1} \lb \xi \rb^s \F_x$ is the Bessel potential operator of order $-s$, where as usual $\lb \xi \rb^s=(1 + \xi^2)^{\frac{s}{2}}$,
\end{itemize}
Allthough our argument relies almost completely on the smoothing effects inherent in the estimates for free solutions
obtained in the previous section, we will need sometimes the following resonance relation for the phase function $\phi(\xi)=\xi^{2j+1}$ and a cubic nonlinearity.
\begin{lemma}
 \label{resonance}
$$(\xi_1+\xi_2+\xi_3)^{2j+1}-\xi_1^{2j+1}-\xi_2^{2j+1}-\xi_3^{2j+1}=(\xi_1 +\xi_2)(\xi_2 +\xi_3)(\xi_3 +\xi_1)\Sigma ,$$
where
$$\Sigma \gs (\xi_1 +\xi_2)^{2j-2}+(\xi_2 +\xi_3)^{2j-2}+(\xi_3 +\xi_1)^{2j-2}.$$
Consequently,
\begin{multline*}
 |(\xi_1 +\xi_2)(\xi_2 +\xi_3)(\xi_3 +\xi_1)|((\xi_1 +\xi_2)^{2j-2}+(\xi_2 +\xi_3)^{2j-2}+(\xi_3 +\xi_1)^{2j-2})\\
\ls \sum_{i=0}^3 \lb \sigma_i \rb \ls \max_{i=0}^3 \lb \sigma_i \rb \ls \prod_{i=0}^3 \lb \sigma_i \rb. \hspace{5cm}
\end{multline*}

\end{lemma}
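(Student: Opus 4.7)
\textbf{Proof plan for Lemma \ref{resonance}.}
The plan is to factor
$P:=(\xi_1+\xi_2+\xi_3)^{2j+1}-\xi_1^{2j+1}-\xi_2^{2j+1}-\xi_3^{2j+1}$
by passing to the variables $a:=\xi_1+\xi_2$, $b:=\xi_2+\xi_3$, $c:=\xi_3+\xi_1$, in which $P$ is forced to vanish when any of $a,b,c$ is zero (since $2j+1$ is odd), and then to obtain a manageable formula for the quotient $\Sigma=P/(abc)$ from which both non-negativity and the explicit lower bound can be read off.

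With this substitution $\xi_1+\xi_2+\xi_3=\tfrac{a+b+c}{2}$, and the individual $\xi_i$ are the remaining sign combinations divided by two. Using $(-a+b+c)^{2j+1}=-(a-b-c)^{2j+1}$ (odd exponent), $2^{2j+1}P$ equals the mixed second difference
$$Q(a,b,c)=(a+b+c)^{2j+1}-(a-b+c)^{2j+1}-(a+b-c)^{2j+1}+(a-b-c)^{2j+1}.$$
Applying the fundamental theorem of calculus twice, first in $b$ and then in $c$, we rewrite this as
$$Q(a,b,c)=(2j+1)(2j)\int_{-b}^{b}\!\!\int_{-c}^{c}(a+x+y)^{2j-1}\,dy\,dx.$$
This integral representation is the main tool: expanding $(a+x+y)^{2j-1}$ by the binomial theorem, odd-power terms in $x$ or $y$ integrate to zero over the symmetric rectangle, while every surviving contribution is a positive multiple of $bc$ times a monomial in $a,b,c$.

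Pulling out $abc$ (the power of $a$ in $Q$ is always odd by parity) one obtains
$$\Sigma=\frac{Q}{2^{2j+1}abc}=\frac{(2j+1)(2j)}{2^{2j-1}}\sum_{\substack{k\text{ even}\\0\le k\le 2j-2}}\binom{2j-1}{k}a^{2j-2-k}\sum_{\substack{l\text{ even}\\0\le l\le k}}\binom{k}{l}\frac{b^{l}c^{k-l}}{(l+1)(k-l+1)},$$
a polynomial in $a^{2},b^{2},c^{2}$ with strictly positive coefficients. Inspection of the extremal index choices $(k,l)=(0,0)$, $(2j-2,2j-2)$, and $(2j-2,0)$ shows that the pure monomials $a^{2j-2}$, $b^{2j-2}$, and $c^{2j-2}$ each appear with coefficient $\tfrac{(2j+1)(2j)}{2^{2j-1}}$, while all remaining terms are non-negative. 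Consequently
$$\Sigma\gs a^{2j-2}+b^{2j-2}+c^{2j-2},$$
which is the first half of the lemma.

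For the consequence, the identity $\tau=\tau_1+\tau_2+\tau_3$ gives $\sigma_0-\sigma_1-\sigma_2-\sigma_3=-P$, so $|P|\le\sum_{i=0}^{3}|\sigma_i|\ls\max_{i}\lb\sigma_i\rb\ls\prod_{i=0}^{3}\lb\sigma_i\rb$ using $\lb\sigma_i\rb\ge1$; combining with $|P|=|abc|\,\Sigma$ and the lower bound on $\Sigma$ yields the full chain of inequalities. The main technical obstacle is showing non-negativity of $\Sigma$ uniformly in $(a,b,c)\in\R^{3}$; this is what the integral representation buys us, by displaying $\Sigma$ as an explicit polynomial in the squares $a^{2},b^{2},c^{2}$ with manifestly non-negative coefficients, so that no compactness or case analysis is needed.
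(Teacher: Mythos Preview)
Your argument is correct and follows the same overall strategy as the paper: pass to the variables $a=\xi_1+\xi_2$, $b=\xi_2+\xi_3$, $c=\xi_3+\xi_1$, recognize $2^{2j+1}P$ as the mixed second difference $Q(a,b,c)$, factor out $abc$, and read off the pure powers $a^{2j-2}$, $b^{2j-2}$, $c^{2j-2}$ from an explicit expansion with nonnegative coefficients. The one genuine difference is in how you obtain that expansion. The paper performs two successive binomial expansions of $Q$ (first in $c$, then in $b$), using $z^k-(-z)^k=0$ for even $k$ to kill the unwanted terms, and arrives at
\[
Q=4abc\sum_{l=0}^{j-1}\binom{2j+1}{2l+1}c^{2l}\sum_{m=0}^{j-l-1}\binom{2(j-l)}{2m+1}a^{2(j-l-m-1)}b^{2m}.
\]
You instead write $Q$ as the double integral $(2j+1)(2j)\int_{-b}^{b}\int_{-c}^{c}(a+x+y)^{2j-1}\,dy\,dx$ and expand the integrand once; the symmetric domain of integration then does the work of eliminating odd powers. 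Both routes yield a polynomial in $a^2,b^2,c^2$ with strictly positive coefficients, from which the lower bound $\Sigma\gs a^{2j-2}+b^{2j-2}+c^{2j-2}$ follows by inspecting the three extreme index choices; your integral representation makes the nonnegativity of $\Sigma$ perhaps slightly more transparent, while the paper's purely algebraic expansion avoids any analysis. The derivation of the ``consequently'' part via $\sigma_0-\sigma_1-\sigma_2-\sigma_3=-P$ is the intended one.
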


\begin{proof}
 With $x=\xi_1 +\xi_2$, $y=\xi_2 +\xi_3$, and $z=\xi_3 +\xi_1$, the left hand side of the claimed identity becomes
$$\frac{1}{2^{2j+1}}\left\{ (x + y + z)^{2j+1} - (x + y - z)^{2j+1} - ((x - y + z)^{2j+1} - (x - y - z)^{2j+1})\right\},$$
with
\begin{multline*}
\left\{ \dots \right\} = \sum_{k=0}^{2j+1}\binom{2j+1}{k}\left\{ (x+y)^{2j+1-k}(z^k -(-z)^k) -  (x-y)^{2j+1-k}(z^k -(-z)^k)\right\} \\
= 2z\sum_{l=0}^{j}\binom{2j+1}{2l+1}((x+y)^{2(j-l)} - (x-y)^{2(j-l)})z^{2l}\qquad\qquad \qquad\\
= 2z\sum_{l=0}^{j}\binom{2j+1}{2l+1}z^{2l}\sum_{k=0}^{2(j-l)}\binom{2(j-l)}{k}x^{2(j-l)-k}(y^k -(-y)^k) \qquad\\
= 4xyz\sum_{l=0}^{j-1}\binom{2j+1}{2l+1}z^{2l}\sum_{m=0}^{j-l-1}\binom{2(j-l)}{2m+1}x^{2(j-l-m-1)}y^{2m}.\qquad \qquad \qquad
\end{multline*}
Now we pic out the contributions for $l=j-1$, $l=0, m=j-1$, and $l=m=0$.
\end{proof}

\begin{proof}[Proof of Theorem \ref{cubic}]
 Apart from the trivial region where $|\xi_{max}| \le 1$, whose contribution can be estimated by
$$\n{\prod_{i=1}^3 p u_i}{\widehat{L^r_{xt}}}\ls \prod_{i=1}^3 \n{p u_i}{\widehat{L^{3r}_{xt}}}\ls \prod_{i=1}^3 \n{p u_i}{X^{r}_{0,b}}\ls \prod_{i=1}^3 \n{u_i}{X^{r}_{s,b}},$$
we consider two main cases:

\begin{itemize}
\item[1.] The nonresonant case, where $|\xi_{max}| \gg |\xi_{min}|$, and
\item[2.] the resonant case, where $|\xi_{max}| \sim |\xi_{min}|$.
\end{itemize}

1. In the nonresonant case we assume without loss of generality that $|\xi_1|\ge|\xi_2|\ge|\xi_3|$. We distingiush two
subcases. \\

Subcase 1.1: Here we assume additionally $|\xi_1| \ge 1.1 |\xi_2|$, such that $|\xi_1+\xi_2||\xi_1-\xi_2| \gs \xi_1^2$
as well as $|\xi_1+\xi_2||\xi_1+\xi_2+ 2\xi_3| \gs \xi_1^2$. Then for the symbols of the Fourier multipliers $M_{r,j}(u_1,u_2)$
and $M^*_{\rho ',j}(u_1 u_2,u_3)$ we have that $m_j(\xi_1,\xi_2) \gs \xi_1^{2j}$ and $m^*_j(\xi_1+\xi_2,\xi_3) \gs \xi_1^{2j}$.
This leads to a gain of $\frac{2j}{r}$ derivatives in the application of Corollary \ref{xBill} and of $\frac{2j}{\rho'}$ derivatives in the application of \eqref{204}, both on the highest frequency. More precisely we have
\begin{eqnarray*}
J^s \partial_x^{l_0}(\partial_x^{l_1}u_1\partial_x^{l_2}u_2\partial_x^{l_3}u_3) & \preceq & 
 \partial_x^{l_0}(J^s\partial_x^{l_1}u_1\partial_x^{l_2}u_2\partial_x^{l_3}u_3)\\
& \preceq & M_{r,j}(J^{s+\frac{2j}{r'}-1}u_1,u_2)u_3 \\
& \preceq & M^*_{\rho ',j}\left(M_{r,j}(J^{s}u_1,u_2),J^{2j(\frac{1}{r'}-\frac{1}{\rho'})-1}u_3 \right) .
\end{eqnarray*}
Now the dual version \eqref{204} of the bilinear estimate is applied to obtain
\begin{multline*}\|M^*_{\rho ',j}\left(M_{r,j}(J^{s}u_1,u_2),J^{2j(\frac{1}{r'}-\frac{1}{\rho'})-1}u_3 \right)\|_{X^{r}_{0,b'}}\\
\ls \|M_{r,j}(J^{s}u_1,u_2)\|_{\widehat{L^r_{xt}}}\|J^{2j(\frac{1}{r'}-\frac{1}{\rho'})-1}u_3\|_{X^{\rho '}_{0,-b'}}.
\end{multline*}
This requires $b' < - \frac{1}{\rho '}$ (condition 1). For the first factor we obtain from Corollary \ref{xBill} that
$$\|M_{r,j}(J^{s}u_1,u_2)\|_{\widehat{L^r_{xt}}} \ls \n{u_1}{X^{r}_{s,b}}\n{u_2}{X^{r}_{0,b}}\ls \n{u_1}{X^{r}_{s,b}}\n{u_2}{X^{r}_{s,b}},$$
while Sobolev type embeddings give
$$\|J^{2j(\frac{1}{r'}-\frac{1}{\rho'})-1}u_3\|_{\X{\rho '}{0}{-b'}} \ls \n{u_3}{X^{r}_{sb}}$$
for the second factor, provided that $b+b'-\frac{1}{r} > -\frac{1}{\rho'}$ (condition 2) and $s > \frac{2j-1}{r'}-\frac{2j+1}{\rho '}$ (condition 3). Finally we choose $\rho$ with $-\frac{1}{\rho'} \in (b', \min{(b+b'-\frac{1}{r},\frac{1}{r'}(\tfrac{1}{2j+1}-\tfrac{1}{2})})$, such that the conditions 1, 2, and 3 are satisfied. \\
 
Subcase 1.2: Here we have $|\xi_1| \le 1.1 |\xi_2|$ and hence $|\xi_2| \gg |\xi_3|$, so that $|\xi_1+\xi_3||\xi_1-\xi_3| \gs \xi_1^2$
as well as $|\xi_1+\xi_3||\xi_1+\xi_3+ 2\xi_2| \gs \xi_1^2$. So the argument performed in subcase 1.1 applies with the same
upper bound, if $u_2$ and $u_3$ are interchanged. \\

2. In the resonant case we distinguish again several subcases or -regions, respectively: \\

Subcase 2.1: At least for one pair $(i,j)$ we have $|\xi_i-\xi_j|\ge|\xi_i+\xi_j|$.
Here we may assume by symmetry that $|\xi_2-\xi_3|\ge|\xi_2+\xi_3|$. For the contribution of this region
we have
$$J^s \partial_x^{l_0}(\partial_x^{l_1}u_1\partial_x^{l_2}u_2\partial_x^{l_3}u_3) \preceq T_{\ge}(J^{s+s_0}u_1,J^{s+s_1}u_2,J^{s+s_1}u_3),$$
whenever $s_{0,1} \ge 0$ fulfill $s_0 + 2s_1 = \frac{2j-1}{r}$. Now \eqref{xTrill} of Corollary \ref{Trill}
gives the desired bound.\\

Subcase 2.2:  $|\xi_1-\xi_2|\le|\xi_1+\xi_2|$, $|\xi_2-\xi_3|\le|\xi_2+\xi_3|$ and $|\xi_3-\xi_1|\le|\xi_3+\xi_1|$, so that all the $\xi_i$ have the same sign, which implies by Lemma \ref{resonance} that
$$|\xi_1|^{2j+1} \sim |\xi_2|^{2j+1} \sim |\xi_3|^{2j+1} \le \prod_{i=0}^3 \langle \sigma_i \rangle.$$ \\

Subsubcase 2.2.1: Let us assume first, that at least one of the $|\xi_i-\xi_j|\ge 1$.
By symmetry we may restrict ourselves to $|\xi_2-\xi_3|\ge 1$. Gaining a $\langle \xi \rangle ^{\e}$ from the $\sigma's$ we obtain as an upper bound for the contribution from this subregion
$$\n{T_{\le}(J^{s-\e}\Lambda^{\e}u_1,J^{j-\frac{1}{2}}\Lambda^{\e}u_2,J^{j-\frac{1}{2}}\Lambda^{\e}u_3)}{\widehat{L^r_{xt}}}   \ls \prod_{i=1}^3 \n{u_i}{X^{r}_{s,b}},$$
where we have used the second part of Corollary \ref{xluis}. ($\e$ was chosen sufficiently small here, for given $b>\frac{1}{r}$.)\\

Subsubcase 2.2.2: Now we consider $|\xi_i-\xi_j|\le 1$ for all $1 \le i , j \le 3$.
Again, we can gain a $\langle \xi \rangle ^{\e}$ from the $\sigma's$. Now, writing 
$$f_i(\xi,\tau)=\langle \xi \rangle^s \langle \tau- \xi^{2j+1}\rangle^b \F u_i(\xi,\tau), \,\,\,\,1\le i\le 3, \,\,\,\,\mbox{such that}\,\,\,\,
\n{f_i}{L^{r'}_{\xi \tau}}=\n{u_i}{X^{r}_{sb}},$$ 
it suffices to show 
\begin{equation}\label{300}
\n{\langle \xi \rangle^{s+l_0-\e} \int_A d\nu \prod_{i=1}^3\langle \xi_i \rangle^{l_i-s}\langle \tau_i- \xi_i^{2j+1}\rangle^{-\tilde{b}}f_i(\xi_i,\tau_i)}{L^{r'}_{\xi \tau}} \ls \prod_{i=1}^3 \n{f_i}{L^{r'}_{\xi \tau}},
\end{equation}
where in $A$ all the differences $|\xi_k-\xi_j|$, $1 \le k , j \le 3$, are bounded by $1$ and $|\xi| \sim |\xi_i| \sim \langle \xi_i \rangle$ for all $1\le i\le 3$. Here we have replaced $b$ by a slightly smaller $\tilde{b}$, so that still $\tilde{b}r>1$. By H\"older's inequality and Fubini's Theorem the proof of \eqref{300} is reduced to show that
\begin{equation}\label{301}
\sup_{\xi, \tau}\langle \xi \rangle^{2j-1-2s-\e} \left( \int_A d\nu \prod_{i=1}^3 \langle \tau_i- \xi_i^{2j+1}\rangle^{-\tilde{b}r}\right)^{\frac{1}{r}} < \infty .
\end{equation}
Using \cite[Lemma 4.2]{GTV97} twice, we see that
$$\int_A d\nu \prod_{i=1}^3 \langle \tau_i- \xi_i^{2j+1}\rangle^{-\tilde{b}r} \ls \int_{A'} d\xi_1d\xi_2 \langle \tau-  \xi_1^{2j+1} - \xi_2^{2j+1} - \xi_3^{2j+1}\rangle^{-\tilde{b}r},$$
where $A'$ is simply the projection of $A$ onto $\mathbb{R}^2$. We decompose
$$A'=A_0 \cup A_1 \cup \bigcup_{0\le k,l \ls \ln{(|\xi|)}}A_{kl},$$
where in $A_0$ ($A_1$) we have $|\xi_1-\xi_3|\ls |\xi|^{1-2j}$ ($|\xi_2-\xi_3|\ls |\xi|^{1-2j}$),
so that the contributions of these subregions are bounded by $c|\xi|^{1-2j}$. In $A_{kl}$ we have $|\xi_1-\xi_3|\sim 2^{-k}$
and $|\xi_2-\xi_3|\sim 2^{-l}$, where by symmetry we may assume $l \le k$. Now, with $g$ as in \eqref{g3} and
$y=\xi_2-\tfrac{\xi-\xi_1}{2}$, we have for the integral over $A_{kl}$
\begin{multline*}
 \int_{A_{kl}} d\xi_1d\xi_2 \langle \tau-  \xi_1^{2j+1} - \xi_2^{2j+1} - \xi_3^{2j+1}\rangle^{-\tilde{b}r}
= \int_{\substack{y \sim 2^{-l}\\|\xi_1-\xi_3|\sim 2^{-k}}} d\xi_1 dy \lb g(\xi_1;y)\rb^{-\tilde{b}r} \\
\ls \int_{\substack{y \sim 2^{-l}\\|\xi_1- \tfrac{\xi}{3}|\ls 2^{-l}}}   \frac{d\xi_1 dg}{|g'(\xi_1;y)|}\lb g\rb^{-\tilde{b}r} \ls |\xi|^{1-2j}2^l\int_{|\xi_1- \tfrac{\xi}{3}|\ls 2^{-l}}d\xi_1 \int dg \lb g\rb^{-\tilde{b}r} \ls |\xi|^{1-2j}.
\end{multline*}
Summing up over $k$ and $l$ we find
$$\int_{A'} d\xi_1d\xi_2 \langle \tau-  \xi_1^{2j+1} - \xi_2^{2j+1} - \xi_3^{2j+1}\rangle^{-\tilde{b}r}
\ls \frac{(\ln{|\xi|})^2}{|\xi|^{2j-1}} ,$$
which gives \eqref{301}, since $2j-1-2s-\frac{2j-1}{r} \le 0$.
\end{proof}

\subsection{Quintic and higher nonlinearities}

The quintic and higher order contributions to the nonlinearities can be treated in a uniform way and with a better
lower bound on $s$ than for the cubic terms discussed above. This is the subject of the present section, where we shall
prove the following estimate.

\begin{theorem}
 \label{quintic+}
Let $j \ge k \ge 2$ and $\sum_{i=0}^{2k+1}l_i = 2(j-k)+1$, where $l_i \ge 0$. Then for $2 \ge r >1$ and $s > - \frac{1}{r'}$ there exists
$b'> - \frac{1}{r'}$, such that for all $b > \frac{1}{r}$
\begin{equation}
 \label{quinticest}
 \|\pd_x^{l_0}\prod_{i=1}^{2k+1}\pd_x^{l_i}u_i\|_{X^r_{s,b'}} \ls \prod_{i=1}^{2k+1}\|u_i\|_{X^r_{s,b}}.
\end{equation}
\end{theorem}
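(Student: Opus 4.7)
My approach exploits the dramatic imbalance between the number of factors and the number of derivatives: with $2k+1 \ge 5$ inputs sharing only $2(j-k)+1 \le 2j-3$ total derivatives, the estimate is much more forgiving than the cubic case. The mild hypothesis $s > -\tfrac{1}{r'}$, essentially the scaling threshold, reflects that only one or two factors require careful smoothing while the remaining three or more contribute essentially for free.

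After the standard duality reduction I assume, by symmetry among $u_2, \dots, u_{2k+1}$, that $|\xi_1| \ge |\xi_i|$ for all $i \ge 2$. Then $|\xi| \le (2k+1)|\xi_1|$, so $|\xi_1|$ dominates everywhere, and
$$|\xi|^{l_0}\prod_{i=1}^{2k+1}|\xi_i|^{l_i} \lesssim |\xi_1|^{2(j-k)+1}.$$
For $s \ge 0$ one also has $\lb\xi\rb^s \lesssim \lb\xi_1\rb^s$, so the full derivative weight $\lb\xi_1\rb^{s+2(j-k)+1}$ is placed on $u_1$. For $-\tfrac{1}{r'} < s < 0$ the subregion $|\xi| \ll |\xi_1|$ requires attention, but there at least one other $|\xi_i|$ must also be comparable to $|\xi_1|$ (a ``high-high to low'' configuration), and the mismatch is compensated by extracting a small $\lb\xi\rb^\e$ gain from the space-time resonance, much as in Subsubcase 2.2.1 of the proof of Theorem~\ref{cubic}.

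The reduced task is to gain $M := 2(j-k)+1$ derivatives on $u_1$. When $|\xi_1| \gg |\xi_2|$ I pair $u_1$ with $u_2$ through the bilinear operator $M_{r,j}$ of Lemma~\ref{Bill}: its symbol satisfies $m_j(\xi_1,\xi_2)^{\frac{1}{r}} \gtrsim |\xi_1|^{\frac{2j}{r}}$, so Corollary~\ref{xBill} yields $\frac{2j}{r}$ derivatives in one stroke. If $r$ is so close to $2$ that $\frac{2j}{r}$ falls short of $M$, I bring in $u_3$ via the trilinear operators $T_{\ge}$, $T_{\le}$ of Corollaries~\ref{Trill} and \ref{xluis}, which contribute a further $\frac{2j-1}{r}$ derivatives, abundantly above what still remains. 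When $|\xi_1| \sim |\xi_2|$ from the outset I apply the trilinear operators directly on $(u_1,u_2,u_3)$, which distribute the same gain among two of the three inputs. In either regime the surplus $2k-2 \ge 2$ factors $u_4, \dots, u_{2k+1}$ are placed in Fefferman--Stein Strichartz norms via Corollary~\ref{genFeSt} and combined by H\"older's inequality.

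The principal technical obstacle will be the regime $r \in (1, \tfrac{4}{3}]$ where Corollary~\ref{genFeSt} degenerates, forcing the ``free'' factors off of $L^p_{xt}$ and onto the trivial $\widehat{L^{\infty}_{xt}}$ endpoint of Hausdorff--Young or its interpolants with the $r = \tfrac{4}{3}$ endpoint. The slack $s + \tfrac{1}{r'} > 0$ together with the fact that the trilinear smoothing has already occupied three of the $2k+1$ slots leaves ample room to absorb this loss. The case split into resonant versus nonresonant subregions is entirely analogous to, but substantially lighter than, the one used in the proof of Theorem~\ref{cubic}; the essential new content is the derivative arithmetic above.
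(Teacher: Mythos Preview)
Your sketch follows a genuinely different route from the paper's, and while the derivative arithmetic you outline is suggestive, it does not close in the range where it matters most: $r$ close to $2$ and $s$ close to $-\tfrac12$. The paper does \emph{not} prove Theorem~\ref{quintic+} by a single direct argument over the whole strip $1<r\le 2$. Instead it proves two endpoint results with different tools and then applies multilinear interpolation: (i) the case $r=2$, $s>-\tfrac12$ (Theorem~\ref{quintic+Hs}), via the Kato local smoothing estimate $\|\pd_x^j e^{t\pd_x^{2j+1}}u_0\|_{L^\infty_xL^2_t}\sim\|u_0\|_{L^2}$ and the maximal function estimate $\|e^{t\pd_x^{2j+1}}u_0\|_{L^4_xL^\infty_t}\lesssim\|D_x^{1/4}u_0\|_{L^2}$; and (ii) a weak version for $r$ in a small interval $(1,r_0)$ with the merely positive threshold $s>\tfrac{2(j-k)+1}{2kr'}$ (Theorem~\ref{quintic+r}), using essentially the bilinear/dual-bilinear machinery you describe. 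Since $\tfrac{2(j-k)+1}{2kr'}\to 0$ as $r\to 1$, the interpolation reaches $s>-\tfrac{1}{r'}$ everywhere.

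The gap in your plan is the placement of the $2k-2$ ``free'' factors when $s$ is negative. At $r=2$, putting $u_4,\dots,u_{2k+1}$ into $\widehat{L^\infty_{xt}}$ costs more than $\tfrac12$ derivatives per factor, and the combined bilinear plus dual-bilinear gain is strictly less than $2j$; a short count in the configuration $|\xi_i|\sim N$, $|\xi|\sim N$, $s=-\tfrac12+\e$, $k=2$ shows a deficit of order $N^{0+}$ that does not close. The Fefferman--Stein route does not help either: at $r=2$ it only gives $L^6_{xt}$, and the trilinear output lives in $\widehat{L^2_{xt}}$, which does not H\"older directly against physical-side $L^p$ norms. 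Your proposed ``further $\tfrac{2j-1}{r}$'' from the trilinear operators on top of the bilinear gain is not a legitimate composition: once $M_{r,j}(u_1,u_2)$ sits in $\widehat{L^r_{xt}}$, Corollaries~\ref{Trill} and~\ref{xluis} no longer apply to it. Finally, the resonance identity of Lemma~\ref{resonance} is specific to three frequencies and gives no usable algebraic gain for $2k+1\ge 5$ inputs, so the ``small $\lb\xi\rb^\e$ from the space-time resonance'' in your $s<0$ discussion is unsupported. The missing ingredients are precisely the $L^\infty_xL^2_t$--$L^p_xL^\infty_t$ estimates \eqref{Kato}--\eqref{max}, which the paper invokes only at $r=2$ and then transfers to general $r$ by interpolation.
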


Theorem \ref{quintic+} will be obtained by multilinear interpolation. First we prove the special case of the Theorem,
where $r=2$, see the next subsection. This special result for $L^2$-based spaces leads to an interesting by product
of our analysis for those equations having only quintic or higher order terms in their nonlinear part (Proposition \ref{nocubic}
in the introduction). Then we will show a relatively weak version of the estimate for $r$ in a small intervall $(1, r_0)$
and large values of $s$, more precisely we will assume $s > \frac{2(j-k)+1}{2kr'}=:s_0(r)$, see Theorem \ref{quintic+r}
below. Since $s_0(r) \rightarrow 0$ for $r \rightarrow 1$ this estimate is nonetheless sufficient to obtain the full
result by interpolation.

\subsubsection{$H^s$-estimates}

\begin{theorem}
 \label{quintic+Hs}
Let $j \ge k \ge 2$ and $\sum_{i=0}^{2k+1}l_i = 2(j-k)+1$, where $l_i \ge 0$. Then for $s > - \frac12$ there exists
$b'> - \frac12$, such that for all $b > \frac12$
\begin{equation}
 \label{quinticestHs}
 \|\pd_x^{l_0}\prod_{i=1}^{2k+1}\pd_x^{l_i}u_i\|_{X_{s,b'}} \ls \prod_{i=1}^{2k+1}\|u_i\|_{X_{s,b}}.
\end{equation}
\end{theorem}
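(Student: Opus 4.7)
The plan is to use duality followed by a combination of the $L^2$-bilinear smoothing of Corollary \ref{xBill} with the Strichartz estimates of Corollary \ref{genFeSt}. Since $(X_{s,b'})^{*} = X_{-s,-b'}$ at $r=2$, the claim reduces to bounding the symmetric $(2k+2)$-linear form
$$J := \Bigl|\int \pd_x^{l_0}\Bigl(\prod_{i=1}^{2k+1}\pd_x^{l_i}u_i\Bigr)\,\overline{u_{2k+2}}\,dx\,dt\Bigr|$$
by $\|u_{2k+2}\|_{X_{-s,-b'}}\prod_{i=1}^{2k+1}\|u_i\|_{X_{s,b}}$. After integrating by parts to move $\pd_x^{l_0}$ onto $\overline{u_{2k+2}}$, I treat all $2k+2\ge 6$ factors symmetrically with derivative indices $l_0,\ldots,l_{2k+1}$ summing to $2(j-k)+1$. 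I then Littlewood--Paley decompose, $u_i=\sum_{N_i}P_{N_i}u_i$, and by the convolution constraint order the dyadic scales so that $N_1\ge N_2\ge\cdots\ge N_{2k+2}$; the low-frequency regime $N_1\lesssim 1$ is trivial, and otherwise $N_1\sim N_2$.

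Next I pair the two highest-frequency factors $P_{N_1}u_{(1)}$, $P_{N_2}u_{(2)}$ by means of the bilinear estimate \eqref{xBronstein} of Corollary \ref{xBill} at $p=q=r_1=r_2=2$. In the non-degenerate region this yields a gain of $m_j^{1/2}\gtrsim N_1^{j}$ derivatives on their product in $L^2_{xt}$. The remaining $2k\ge 4$ factors are distributed among $L^{p_*}_{xt}$ norms via H\"older and the Strichartz bound \eqref{xFeSt} in Corollary \ref{genFeSt}, with $p_*$ chosen large enough to lie in the admissible range of that corollary (some $p_*>4$ is always available since at least four factors remain; for $k=2$ the endpoint $(p,q)=(4,\infty)$ mentioned after Corollary \ref{genFeSt} can be invoked). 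Collecting, one obtains a dyadic decay factor of the form $N_1^{l_0+l_1-j+s}\prod_{i\ge 3}N_i^{l_i-s-(2j-1)/p_*}$, and since $l_0+l_1\le \sum l_i = 2(j-k)+1\le 2j-3$ the $N_1$-exponent is strictly negative for $s$ close to $-\tfrac12$; the low-frequency sums $\sum_{N_i\lesssim N_1}$ converge precisely when $s>-\tfrac{1}{2}$. The matching value of $b'$ just above $-\tfrac12$ is produced via the dual bilinear bound \eqref{204} of Corollary \ref{xBill*}.

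The main obstacle is the resonant region $|\xi_1+\xi_2|\ll N_1$, where the multiplier $m_j(\xi_1,\xi_2)=|\xi_1+\xi_2||\xi_1-\xi_2|(\xi_1^{2j-2}+\xi_2^{2j-2})$ degenerates and the bilinear gain disappears. In this sub-region I replace the bilinear step by the trilinear smoothing estimates of Corollaries \ref{Trill} and \ref{xluis}, grouping the near-cancelling pair with a third factor so that the near-cancellation is compensated by the extra dispersion of the trilinear operator $T_\ge$ or $T_\le$; the remaining $2k-1\ge 3$ factors are enough to complete the H\"older/Strichartz bookkeeping. The hypothesis $k\ge 2$ (five or more input factors) is precisely what makes the threshold $s>-\tfrac12$ attainable, in contrast with the cubic result of Theorem \ref{cubic}, and it matches the scaling prediction $s_c=-\tfrac12$ for the mKdV hierarchy stated in the introduction.
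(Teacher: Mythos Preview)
Your outline takes a genuinely different route from the paper. For this $r=2$ result the paper does \emph{not} use the bilinear/trilinear Fourier-restriction machinery of Section~\ref{Airy} at all; instead it invokes the Kato local smoothing estimate \eqref{Kato} and the maximal function estimate \eqref{max}, together with their interpolated and dual versions (Lemmas~\ref{LemmaxKato} and \ref{Lemmaxmax}). The argument is a pure H\"older bound in mixed $L^p_xL^q_t$ spaces: in the case $|\xi|\sim|\xi_1|$ the output is placed in the dual Kato space via \eqref{xKato*}, the top factor in $L^\infty_xL^2_t$ (gaining $j$ derivatives), and the remaining $2k$ factors in $L^{\approx 2k}_xL^\infty_t$ via \eqref{xmaxSob}; the case $|\xi_1|\sim|\xi_2|$ uses the dual maximal estimate \eqref{xmax*} instead. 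No resonance analysis is needed, and the threshold $s>-\tfrac12$ drops out of two elementary conditions on an auxiliary parameter $\delta$.

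As written, your sketch has real gaps. First, the Strichartz bound \eqref{xFeSt} at $r=2$ lives on the line $\tfrac{2}{p}+\tfrac{1}{q}=\tfrac12$; the diagonal case $p=q$ forces $p_*=6$, not an arbitrary $p_*>4$. With the bilinear pair already in $L^2_{xt}$, fitting $2k\ge4$ further factors into $r=2$ Strichartz spaces whose H\"older exponents sum to $(\tfrac12,\tfrac12)$ requires $k=\tfrac32$, so you are forced to spend Sobolev regularity (e.g.\ some factors in $L^\infty_{xt}$), and this cost is neither acknowledged nor tracked in your derivative count. Second, the multiplier $m_j(\xi_1,\xi_2)$ vanishes when $|\xi_1-\xi_2|$ is small just as when $|\xi_1+\xi_2|$ is small; you only name the latter, and repairing by grouping with a third factor via $T_\ge$, $T_\le$ does not obviously cover the situation where the two highest frequencies have the same sign and are close (compare Subsubcase~2.2.2 in the proof of Theorem~\ref{cubic}, which needed a separate direct argument). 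Third, your claimed dyadic exponent $N_1^{l_0+l_1-j+s}$ is not obviously negative: in the worst case all $2(j-k)+1$ derivatives land on the two high-frequency factors, so summability in $N_1$ is not established. Finally, the appeal to \eqref{204} for producing $b'>-\tfrac12$ is left unexplained; that estimate requires $\beta<-\tfrac{1}{\rho'}$ and an application of $M^*_{\rho',j}$, whose role in your $(2k+2)$-linear form is not spelled out.
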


The proof of Theorem \ref{quintic+Hs} relies substantially on the interplay between the local smoothing estimate
\begin{equation}
 \label{Kato}
\|\pd _x^j e^{\pm t \pd _x^{2j+1}}u_0\|_{L_x^{\infty}L_t^2} = c \|u_0\|_{L_x^2}
\end{equation}
from \cite[Theorem 4.1]{KPV91} and the maximal function estimate
\begin{equation}
 \label{max}
 \|e^{\pm t \pd _x^{2j+1}}u_0\|_{L_x^4L_t^{\infty}} \ls \|D_x^{\frac14}u_0\|_{L_x^2},
\end{equation}
cf. \cite[Theorem 3]{S87} and \cite[Theorem 2.5]{KPV91}. Taking care of low frequency issues by Sobolev-type embeddings
for $L_x^pL_t^q$-spaces, see \cite[Lemma 3.15 and its proof]{KPV93}, and
interpolating a) with the space-time Sobolev-inequality $\|u\|_{L_{xt}^{\infty}} \ls \|u\|_{X_{\sigma b}}$,
$\sigma, b > \frac12$, as well as b) with the trivial case $X_{0,0}=L^2_{xt}$, we obtain from \eqref{Kato} the following
$X_{s,b}$-estimates.

\begin{lemma}\
 \label{LemmaxKato}
\hfill
\begin{enumerate}
 \item[a)] Let $b > \frac12$, $2 \le p \le \infty$, and $s > \frac12 - \frac{2j+1}{p}$. Then
\begin{equation}
 \label{xKatoSob}
\| u\|_{L_x^{\infty}L_t^p} \ls \|u\|_{X_{s,b}},
\end{equation}
 \item[b)] Let $2 \le p \le \infty$, $s=j(1-\frac{2}{p})$ and $b > \frac12 - \frac{1}{p}$. Then
\begin{equation}
 \label{xKato}
\|J^s u\|_{L_x^{p}L_t^2} \ls \|u\|_{X_{0,b}}
\end{equation}
and, by duality,
\begin{equation}
 \label{xKato*}
\|u\|_{X_{s,-b}} \ls \| u\|_{L_x^{p'}L_t^2}.
\end{equation}
\end{enumerate}
\end{lemma}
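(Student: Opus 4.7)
The plan is to derive both estimates by complex interpolation between two natural endpoints, after transferring the Kenig--Ponce--Vega local smoothing \eqref{Kato} into $X_{s,b}$-language by the standard trick.

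For part (a), I would interpolate between $p=\infty$ and $p=2$. At $p=\infty$, the Sobolev--Bourgain embedding $\|u\|_{L^\infty_{xt}}\ls \|u\|_{X_{\sigma,b}}$ for $\sigma,b>\tfrac12$ provides the endpoint $s>\tfrac12$, which coincides with $\tfrac12-(2j+1)/p$ when $p=\infty$. At $p=2$, I would write $u=\int e^{it\tau}e^{t\pd_x^{2j+1}}g(\tau)\,d\tau$ with $g(\tau)=\F_t[e^{-t\pd_x^{2j+1}}u](\tau)$, apply Minkowski in $L^\infty_xL^2_t$, use \eqref{Kato} on each free evolution at the cost of a $J^{-j}$, and then Cauchy--Schwarz in $\tau$ against $\lb\tau\rb^{-b}$ with $b>\tfrac12$, obtaining $\|u\|_{L^\infty_xL^2_t}\ls \|u\|_{X_{-j,b}}$. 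This is the other endpoint, since $-j=\tfrac12-(2j+1)/2$. Low frequencies, where $J^{-j}$ is harmless but \eqref{Kato} is not directly usable, are controlled by the mixed-norm Sobolev embedding from \cite[Lemma 3.15]{KPV93}. Complex interpolation of the $X_{s,b}$-spaces --- which are weighted $L^2$-spaces on the Fourier side --- against the $L^\infty_xL^p_t$-scale then produces the whole family with $s>\tfrac12-(2j+1)/p$ and $b>\tfrac12$.

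For part (b) the endpoints are again $p=\infty$ and $p=2$. The first is \eqref{Kato} transferred as above, now written as $\|J^ju\|_{L^\infty_xL^2_t}\ls \|u\|_{X_{0,b}}$ for $b>\tfrac12$. The second is the tautology $\|u\|_{L^2_{xt}}=\|u\|_{X_{0,0}}$. Complex interpolation at parameter $\theta=2/p$, simultaneously in $s$, in $b$, and in the mixed-norm integrability index, yields \eqref{xKato} with $s=j(1-2/p)$ and any $b>\tfrac12-\tfrac1p$. The dual bound \eqref{xKato*} then follows immediately from the standard $L^2_{xt}$-pairing duality $(X_{s,b})^*=X_{-s,-b}$ combined with $(L^p_xL^2_t)^*=L^{p'}_xL^2_t$.

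There is no deep obstacle here. The one point that deserves attention is the simultaneous complex interpolation in the Bessel weight $s$ and the modulation weight $b$ of $X_{s,b}$; this is legitimate because on the Fourier side $X_{s,b}$ is a weighted $L^2$-space with product weight $\lb\xi\rb^{2s}\lb\tau-\xi^{2j+1}\rb^{2b}$, to which the Stein--Weiss-type analytic interpolation applies.
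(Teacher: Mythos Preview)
Your proposal is correct and follows essentially the same route as the paper: the paper derives part a) by interpolating the transferred Kato smoothing estimate with the space-time Sobolev inequality $\|u\|_{L^\infty_{xt}}\ls\|u\|_{X_{\sigma,b}}$ (handling low frequencies via \cite[Lemma~3.15]{KPV93}), and part b) by interpolating the transferred Kato estimate with the trivial identity $X_{0,0}=L^2_{xt}$, then dualizing. Your write-up is just a more detailed expansion of exactly this argument.
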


Similarly, the following lemma can be derived from \eqref{max}.

\begin{lemma}
 \label{Lemmaxmax}
\hfill
\begin{enumerate}
\item[a)] Let $b > \frac12$, $4 \le p \le \infty$, and $s > \frac12 - \frac{1}{p}$. Then
\begin{equation}
 \label{xmaxSob}
\| u\|_{L_x^{p}L_t^{\infty}} \ls \|u\|_{X_{s,b}}
\end{equation}
 \item[b)] For $2 \le p \le 4$, $2 \le q \le \infty$ with $\frac{1}{p}=\frac14 + \frac{1}{2q}$, $s=\frac14 - \frac{1}{2q}$,
  and $b > \frac12 - \frac{1}{q}$ we have
\begin{equation}
 \label{xmax}
\| u\|_{L_x^{p}L_t^{q}} \ls \|u\|_{X_{s,b}}
\end{equation}
and
\begin{equation}
 \label{xmax*}
\|u\|_{X_{0,-b}} \ls \|J^s u\|_{L_x^{p'}L_t^{q'}}.
\end{equation}
\end{enumerate}
\end{lemma}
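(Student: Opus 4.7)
\emph{Plan.} I mirror the derivation of Lemma~\ref{LemmaxKato} from \eqref{Kato}: first transfer the free-solution maximal estimate \eqref{max} into the $X_{s,b}$-setting, and then obtain (a) and (b) by combining this transferred endpoint with the space-time Sobolev embedding and with the trivial identity $X_{0,0}=L^2_{xt}$, respectively. For the transfer, set $U(t)=e^{(-1)^j t\pd_x^{2j+1}}$ and $g(\sigma)=\F_t[U(-\cdot)u](\sigma)$, so that $u(t)=c\int e^{it\sigma}U(t)g(\sigma)\,d\sigma$. Applying \eqref{max} to $U(t)g(\sigma)$ under the integral and then Cauchy--Schwarz in $\sigma$ against $\lb\sigma\rb^{-b}$, $b>\tfrac12$, yields
\[
\n{u}{L^4_xL^\infty_t}\ls \int\n{D_x^{1/4}g(\sigma)}{L^2_x}\,d\sigma\ls \n{D_x^{1/4}u}{X_{0,b}},
\]
so the endpoint $(p,q,s)=(4,\infty,\tfrac14)$ of \eqref{xmax} holds for every $b>\tfrac12$.

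\emph{Part (a).} The case $p=\infty$ is the standard space-time Sobolev embedding $L^\infty_{xt}\hookleftarrow X_{\sigma,b}$, $\sigma,b>\tfrac12$, already invoked in the proof of Lemma~\ref{LemmaxKato}. For $4<p<\infty$ I split $u=P_{\le 1}u+P_{\ge 1}u$ via a smooth frequency cutoff. On the high-frequency part, Sobolev in the space variable combined with the transferred endpoint gives
\[
\n{P_{\ge 1}u}{L^p_xL^\infty_t}\ls \n{D_x^{1/4-1/p}P_{\ge 1}u}{L^4_xL^\infty_t}\ls \n{u}{X_{1/2-1/p,b}}.
\]
On the low-frequency part, Bernstein yields $\n{P_{\le 1}u}{L^p_x}\ls \n{P_{\le 1}u}{L^2_x}$ pointwise in $t$, after which $\n{u}{L^\infty_tL^2_x}\ls\n{u}{X_{0,b}}$ for $b>\tfrac12$ (Minkowski in $\tau$ followed by Cauchy--Schwarz in $\sigma$) finishes it. The strict inequality $s>\tfrac12-\tfrac1p$ absorbs the $J^s$-versus-$D_x^s$ mismatch and covers both pieces.

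\emph{Part (b).} Complex interpolation between $L^2_{xt}=X_{0,0}$ at parameter $\theta=0$ and the transferred endpoint $L^4_xL^\infty_t\hookleftarrow X_{1/4,b}$, $b>\tfrac12$, at parameter $\theta=1$ produces, with $\theta=1-\tfrac2q$, the relations $\tfrac1p=\tfrac14+\tfrac1{2q}$, $s=\tfrac14-\tfrac1{2q}$ and $b>\tfrac\theta2=\tfrac12-\tfrac1q$, matching \eqref{xmax} exactly. The dual bound \eqref{xmax*} follows by $L^2$-pairing: for $v\in X_{0,b}$ with $\n{v}{X_{0,b}}\le 1$,
\[
|\lb u,v\rb|=|\lb J^su,J^{-s}v\rb|\le \n{J^su}{L^{p'}_xL^{q'}_t}\n{J^{-s}v}{L^p_xL^q_t}\ls \n{J^su}{L^{p'}_xL^{q'}_t},
\]
where the final step applies \eqref{xmax} to $J^{-s}v$; taking the supremum in $v$ produces \eqref{xmax*}. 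The only mildly delicate step is the transfer argument, which is the standard $X_{s,b}$ superposition trick and requires $b>\tfrac12$; the rest is a routine Sobolev/interpolation/duality package entirely parallel to that used in deducing Lemma~\ref{LemmaxKato} from \eqref{Kato}.
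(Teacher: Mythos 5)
Your overall strategy---transferring the maximal function estimate \eqref{max} to the $X_{s,b}$ scale by the standard superposition trick, then a mixed-norm Sobolev embedding for part a), interpolation with the trivial case $X_{0,0}=L^2_{xt}$ for \eqref{xmax}, and duality for \eqref{xmax*}---is exactly the route the paper indicates (it offers no more detail than ``similarly\dots derived from \eqref{max}'' together with the recipe stated before Lemma \ref{LemmaxKato}), and the transfer argument, the interpolation bookkeeping ($\theta=1-\tfrac2q$ reproduces the stated relations among $p$, $q$, $s$, $b$) and the duality step are all correct.

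The one step that fails as written is the low-frequency part of a). Bernstein applied pointwise in $t$, followed by $\|u\|_{L^\infty_tL^2_x}\ls\|u\|_{X_{0,b}}$, controls $\|P_{\le1}u\|_{L^\infty_tL^p_x}$, not $\|P_{\le1}u\|_{L^p_xL^\infty_t}$; Minkowski's integral inequality gives $\|f\|_{L^\infty_tL^p_x}\le\|f\|_{L^p_xL^\infty_t}$, so the quantity you actually need (the maximal function in $t$, measured afterwards in $L^p_x$) is the \emph{larger} of the two and is not bounded by your chain of inequalities. The repair is short and stays inside your framework: since $P_{\le1}$ is convolution in $x$ with an integrable kernel, Young's inequality for $L^\infty_t$-valued functions gives the mixed-norm Bernstein bound $\|P_{\le1}u\|_{L^p_xL^\infty_t}\ls\|P_{\le1}u\|_{L^4_xL^\infty_t}$ for $p\ge4$, and your transferred endpoint then yields the bound $\ls\|D_x^{1/4}P_{\le1}u\|_{X_{0,b}}\ls\|u\|_{X_{0,b}}$, because $|\xi|^{1/4}\le1$ on the support of $\F_x P_{\le1}u$. (Your high-frequency Sobolev step is fine for the same structural reason: the embedding $\|F\|_{L^p_x(B)}\ls\|J_x^{1/4-1/p}F\|_{L^4_x(B)}$ is a convolution in $x$ only and therefore holds for the Banach space $B=L^\infty_t$ by vector-valued Young/HLS.) With that one correction the proof is complete and coincides with the paper's intended argument.
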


Now we are prepared to prove the decisive estimate for quintic and higher nonlinearities in the case of $L^2$-based spaces.

\begin{proof}[Proof of Theorem \ref{quintic+Hs}]
 Let $\xi_i$ denote the frequencies of $u_i$, $1 \le i \le 2k+1$. We assume $|\xi_1|\ge|\xi_2|\ge \dots \ge |\xi_{2k+1}|$ and
distingish two cases.

\quad 

1. $|\xi_1| \sim |\xi|$. In this case, for any $\delta \ge 0$ and  $\e >0$, the left hand side of \eqref{quinticestHs} is bounded
by
$$\|J^{j-\e}( J^{j-2k+1+\e+s+\delta}u_1\!\! \prod_{i=2}^{2k+1}\!\!J^{-\frac{\delta}{2k}}u_i)\|_{X_{0,b'}}
\ls \|J^{j-2k+1+\e+s+\delta}u_1 \!\!\prod_{i=2}^{2k+1}\!\!J^{-\frac{\delta}{2k}}u_i\|_{L_x^{1+\e'}L_t^2}$$
by \eqref{xKato*}. Here $\e'$ depends on $\e$ and both can be made arbitrarily small by choosing $b'$ close enough to $-\frac12$.
H\"older's inequality gives the upper bound
$$\|J^{j-2k+1+\e+s+\delta}u_1\|_{L_x^{\infty}L_t^2}\prod_{i=2}^{2k+1}\|J^{-\frac{\delta}{2k}}
u_i\|_{L_x^{2k(1+\e')}L_t^{\infty}}.$$
For the first factor we have by \eqref{xKato}
$$\|J^{j-2k+1+\e+s+\delta}u_1\|_{L_x^{\infty}L_t^2}\ls \|u_1\|_{X_{s,b}},$$
provided $b>\frac12$ and $\delta + 1 < 2k$ (condition 1). For the other factors we use \eqref{xmaxSob} to get
$$\prod_{i=2}^{2k+1}\|J^{-\frac{\delta}{2k}}u_i\|_{L_x^{2k(1+\e')}L_t^{\infty}} \ls
\prod_{i=2}^{2k+1}\|u_i\|_{X_{s,b}},$$
as long as $b>\frac12$ and $s > \frac12 - \frac{\delta +1}{2k}$ (condition 2). Choosing
$\delta \in (2k(\tfrac12-s)-1,2k-1)$ both conditions, 1 and 2, are fulfilled.

\quad

2. $|\xi_1| \sim |\xi_2|$. Here the contribution is bounded by
\begin{multline*}
 \|J^{-\frac14}( J^{j+s}u_1J^{j-2k+1+ \frac14+\delta}u_2 \prod_{i=3}^{2k+1}J^{-\frac{\delta}{2k-1}}u_i)\|_{X_{0,b'}}\\
\ls \|J^{j+s}u_1J^{j-2k+1+ \frac14+\delta}u_2 \prod_{i=3}^{2k+1}J^{-\frac{\delta}{2k-1}}u_i\|_{L_x^{\frac43+\e}L_t^{1+\e'}}
\end{multline*}
by \eqref{xmax*}. Here again $\e$ and $\e'$ can be made arbitrarily small by choosing $b'$ close to $-\frac12$. $\delta \ge 2$
is to be fixed later on. Now H\"older's inequality gives the upper bound
$$\|J^{j+s}u_1\|_{L_x^{\infty}L_t^2}\|J^{j-2k+1 + \frac14+\delta}u_2\|_{L_x^{\infty}L_t^{2+\e''}}
\prod_{i=3}^{2k+1}\|J^{-\frac{\delta}{2k-1}}u_i\|_{L_x^{\frac{(4+3\e)(2k-1)}{3}}L_t^{\infty}}.$$ 
The first factor is bounded by $c\|u_1\|_{X_{sb}}$ by \eqref{xKatoSob}, which also shows that
$$\|J^{j-2k+1 + \frac14+\delta}u_2\|_{L_x^{\infty}L_t^{2+\e''}} \ls \|u_2\|_{X_{s,b}},$$
provided that $b>\frac12$ and $s > \frac54 + \delta -2k$ (condition 3). For the other factors we use \eqref{xmaxSob} to obtain
$$\prod_{i=3}^{2k+1}\|J^{-\frac{\delta}{2k-1}}u_i\|_{L_x^{\frac{(4+3\e)(2k-1)}{3}}L_t^{\infty}}
\ls \prod_{i=3}^{2k+1}\|u_i\|_{X_{s,b}},$$
which requires again $b>\frac12$ as well as $s > \frac12 - \frac{3}{4(2k-1)}-\frac{\delta}{2k-1}$ (condition 4).
Both conditions 3 and 4 can be satisfied by choosing $\delta \in ((2k-1)(\frac12-s)-\frac34,2k+s-\frac54)$, which is not
empty, since $s > - \frac12$.
\end{proof}

\subsubsection{Estimates close to the critical space}

\begin{theorem}
 \label{quintic+r}
Let $j \ge k \ge 2$ and $\sum_{i=0}^{2k+1}l_i = 2(j-k)+1$, where $l_i \ge 0$. Then there exists $r_0 > 1$ such that for all 
$r \in (1, r_0)$ and $s >  \frac{2(j-k)+1}{2kr'}$ there exists $b'> - \frac{1}{r'}$, such that for all $b > \frac{1}{r}$ the
estimate \eqref{quinticest} holds true.
\end{theorem}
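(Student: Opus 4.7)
The approach mirrors the proof of Theorem \ref{quintic+Hs}, but replaces the Kato-smoothing and maximal-function inputs (which fail at $r$ close to $1$) by a combination of iterated Young's convolution inequality on the Fourier side and the multilinear estimates of Section \ref{Airy}. First, I apply the Leibniz rule to distribute $\pd_x^{l_0}$ over the product and reduce to the case $l_0 = 0$, so that $\sum_{i=1}^{2k+1}l_i = L := 2(j-k)+1$. Then I decompose each $u_i = \sum_{N_i}P_{N_i}u_i$ dyadically in spatial frequency; by the symmetry among the $2k+1$ factors, I restrict to the regime $N_1 \ge N_2 \ge \cdots \ge N_{2k+1}$, so that $\langle\xi\rangle \ls N_1$ and $\langle\xi\rangle^s \ls N_1^s$.

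In the generic subcase $N_1 \gg N_2$, the Fourier transform of the product is a $(2k+1)$-fold convolution. Among the admissible Young exponents $\sum 1/p_i = 2k + 1/r'$ with $p_i \ge 1$, the productive choice places $\widehat{P_{N_1}\pd_x^{l_1}u_1}$ in $L^{r'}_{\xi,\tau}$ and the other $2k$ Fourier transforms in $L^1_{\xi,\tau}$. Hölder's inequality (exploiting $b > 1/r$ for $\tau$-integrability and the dyadic annulus measure $\sim N_i$) converts each $L^1$ norm to
\[
\|\widehat{P_{N_i}\pd_x^{l_i}u_i}\|_{L^1_{\xi,\tau}} \ls N_i^{l_i+1/r-s}\|P_{N_i}u_i\|_{X^r_{s,b}},
\]
and analogously $\|\widehat{P_{N_1}\pd_x^{l_1}u_1}\|_{L^{r'}_{\xi,\tau}} \ls N_1^{l_1-s}\|P_{N_1}u_1\|_{X^r_{s,b}}$. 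In the complementary subcase $N_1 \sim N_2$, where two or more factors carry comparable maximum frequency, I instead apply the trilinear estimate of Corollary \ref{Trill} (or \ref{xluis}) to the triple of highest-frequency factors, which absorbs $(2j-1)/r$ derivatives, and I control the remaining $2k-2$ factors by Hölder in a $\widehat{L^\rho_{xt}}$-space with $\rho$ close to $r$, using the embedding $X^\rho_{s,b}\hookrightarrow\widehat{L^\rho_{xt}}$ valid for $b>1/\rho$.

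The principal obstacle is the positive dyadic exponent $N_1^{l_1}$ that emerges in the generic subcase and prevents direct summation. To overcome this I transfer derivative powers from low-frequency factors to $u_1$ via the relation $|\xi_i| \le N_1$: writing $|\xi_i|^{l_i} \le N_i^{l_i-\alpha_i}N_1^{\alpha_i}$ with $\alpha_i$ chosen slightly above $l_i - s + 1/r$ so that the residual $N_i$-exponent is strictly negative and sums geometrically. The accumulated surplus $\sum_{i\ge 2}\alpha_i$ loaded onto $N_1$ is then balanced, via Hölder between $\ell^{r'}(N_1)$ and $\ell^{r}(N_1)$, against the $\ell^{r'}$-summability of the dyadic $u_1$-pieces built into $\|u_1\|_{X^r_{s,b}}$. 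Optimizing the choice of $\alpha_i$ in tandem with the trilinear gain of Corollary \ref{Trill} in the resonant subcase yields precisely the threshold $s > L/(2kr') = \frac{2(j-k)+1}{2kr'}$, the factor $1/r'$ in the denominator reflecting the $\ell^{r'}$-structure inherent in the $X^r$-norm. Since this threshold tends to $0$ as $r \to 1$, the estimate holds on some nontrivial interval $(1, r_0)$, providing exactly the endpoint required for the multilinear interpolation with Theorem \ref{quintic+Hs} that yields the full Theorem \ref{quintic+}.
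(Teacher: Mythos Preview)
Your argument in the generic subcase $N_1 \gg N_2$ does not reach the claimed threshold. Tracing the Young/H\"older bookkeeping you describe: placing $\widehat{P_{N_1}\pd_x^{l_1}u_1}$ in $L^{r'}_{\xi,\tau}$ costs $N_1^{l_1-s}$, while each of the $2k$ remaining factors in $L^1_{\xi,\tau}$ costs $N_i^{l_i+1/r-s}$. Your ``transfer'' $N_i^{l_i} \le N_i^{l_i-\alpha_i}N_1^{\alpha_i}$ only moves weight onto $N_1$; after summing over $N_2,\dots,N_{2k+1}$ the accumulated $N_1$-exponent is
\[
l_1 + \sum_{i\ge 2}(l_i + \tfrac{1}{r} - s) \;=\; L + \tfrac{2k}{r} - 2ks,
\]
and for this to be nonpositive you need $s \ge \tfrac{L}{2k} + \tfrac{1}{r}$, which for $r$ close to $1$ is roughly $\tfrac{L}{2k}+1$, not the claimed $s > \tfrac{L}{2kr'} \to 0$. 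The $\ell^{r'}$-orthogonality in $N_1$ cannot absorb a positive power of $N_1$. No rearrangement of the $\alpha_i$ changes this count, because the loss $\tfrac{1}{r}$ per low-frequency factor comes from H\"older on a dyadic shell and is intrinsic to the crude Young argument; it is \emph{not} a derivative that can be transferred.

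What is missing is exactly a smoothing input in the case of a single dominant frequency. In the paper's proof this role is played by the bilinear operator $M_{r,j}$ of Corollary \ref{xBill} applied to $u_1$ and one low-frequency factor (gaining $\tfrac{2j}{r}$ derivatives on $|\xi_1|$), followed by the dual multiplier $M^*_{\rho',j}$ of \eqref{204} (gaining another $\tfrac{2j}{\rho'}$); together these make the subcase $|\xi_2|\ll|\xi_1|$ work for every $s\ge 0$. Your trilinear idea via Corollary \ref{Trill} is adequate when several top frequencies are comparable (indeed the balance there does produce the exponent $\tfrac{L}{r'}-2ks$), but it does not cover the genuinely nonresonant regime $N_1 \gg N_2$, and the phrase ``in tandem'' conflates two disjoint cases. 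To repair the proof you must invoke a bilinear smoothing estimate in the $N_1 \gg N_2$ branch rather than plain Young.
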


\begin{proof}

In general we assume that $|\xi_1|\ge|\xi_2|\ge \dots \ge |\xi_{2k+1}|$. First we consider the case, where there are at
least four high frequency factors, i. e. $|\xi_4| \gs |\xi_1|$. We fix $s_1 > \frac14(2(j-k)+1+s+(2k-3)(\frac{1}{r}-s))$
and $s_2 < s-\frac{1}{r}$ so that $4s_1+(2k-3)s_2=2(j-k)+1+s$. (Since we consider $r$ close to $1$, we may assume $s<\frac{1}{r}$
here.) Then the contribution of this region to the left hand side of \eqref{quinticest} is controlled by
\begin{multline*}
\qquad \|J^{s_1}u_1  \cdots  J^{s_1}u_4 \cdot J^{s_2}u_5 \cdots  J^{s_2}u_{2k+1}\|_{\widehat{L^r_{xt}}}\\
\ls \|J^{s_1}u_1 \cdots J^{s_1}u_4 \cdot J^{s_2}u_5 \cdots  J^{s_2}u_{2k+1}\|_{L^r_{xt}}\ls
\prod_{i=1}^4\|J^{s_1}u_i\|_{L^{4r}_{xt}}\prod_{i=5}^{2k+1}\|J^{s_2}u_i\|_{L^{\infty}_{xt}}
\end{multline*}
by the Hausdorff-Young and H\"older inequalities. Our choice of $s_2$ implies for the last factors
$$\prod_{i=5}^{2k+1}\|J^{s_2}u_i\|_{L^{\infty}_{xt}} \ls \prod_{i=5}^{2k+1}\|u_i\|_{X^r_{s,b}}.$$
Concerning the first four factors we observe that by \eqref{xFeSt} and Sobolev type embeddings the inequality
$$\|u\|_{L^{4r}_{xt}} \ls \|u\|_{X^r_{\sigma ,b}}$$
holds, provided $b>\frac{1}{r}$ and $\sigma > \frac{2-2j}{4r}$. This gives the desired
$$\prod_{i=1}^4\|J^{s_1}u_i\|_{L^{4r}_{xt}} \ls \prod_{i=1}^{4}\|u_i\|_{X^r_{s,b}},$$
as long as $s>s_1 +\frac{2-2j}{4r}$, that is for $s>\frac{2(j-k)+1}{2kr'}$, as assumed.
This concludes the discussion of the case, where at least four factors have high frequencies.
So we may assume henceforth, that there are at least two low frequency factors, i.e., we suppose now
$|\xi_{2k}|\ll |\xi_1|$. Two subcases are distinguished.

\quad

1. $|\xi_{2}|\ll |\xi_1|$. Using again the notation $f \preceq g$ for $|\F f| \ls |\F g|$ we have here
$$\pd_x^{l_0}\prod_{i=1}^{2k+1}\pd_x^{l_i}u_i \preceq \left(M_{r,j}(J^{2(j-k)+1-\frac{2j}{r}}u_1,u_{2k+1})u_2 \cdots u_{2k-1} \right) \cdot u_{2k}=:u \cdot v,$$
where the first factor $u$ has the frequency $\xi-\xi_{2k}$, for which $|\xi-\xi_{2k}| \gg |\xi_{2k}|$. Hence, for any
$\rho \in (1,r)$ and sufficiently small $\e >0$
\begin{multline*}
 \pd_x^{l_0}\prod_{i=1}^{2k+1}\pd_x^{l_i}u_i \preceq M^*_{\rho',j} \left(M_{r,j}(J^{2(j-k)+1-\frac{2j}{r}-\frac{2j}{\rho'}}u_1,u_{2k+1})u_2 \cdots u_{2k-1}, u_{2k}\right) \\
\preceq M^*_{\rho',j} \left(M_{r,j}(J^{\frac{2(j-k)+1}{r'}-\frac{2j}{\rho'}}u_1,u_{2k+1})J^{-\frac{1}{r}-\e}u_2 \cdots J^{-\frac{1}{r}-\e}u_{2k-1}, J^{-\frac{1}{r}+\frac{1}{\rho'}-\e}u_{2k}\right).
\end{multline*}
Now we choose $\rho$ close enough to $r$, so that the total number of derivatives on $u_1$ becomes nonpositive. Then for 
$b'<-\frac{1}{\rho'}$
\begin{eqnarray*}
 &&\|\pd_x^{l_0}\prod_{i=1}^{2k+1}\pd_x^{l_i}u_i \|_{X^r_{0,b'}}\\
&\ls & \|M^*_{\rho',j} \left(M_{r,j}(u_1,u_{2k+1})J^{-\frac{1}{r}-\e}u_2 \cdots J^{-\frac{1}{r}-\e}u_{2k-1}, J^{-\frac{1}{r}+\frac{1}{\rho'}-\e}u_{2k}\right)\|_{X^r_{0,b'}}\\
&\ls & \|M_{r,j}(u_1,u_{2k+1})J^{-\frac{1}{r}-\e}u_2 \cdots J^{-\frac{1}{r}-\e}u_{2k-1}\|_{\widehat{L^r_{xt}}}
\|J^{-\frac{1}{r}+\frac{1}{\rho'}-\e}u_{2k}\|_{X^{\rho'}_{0,-b'}}\\
&\ls & \|M_{r,j}(u_1,u_{2k+1})\|_{\widehat{L^r_{xt}}}\prod_{i=2}^{2k-1}\|J^{-\frac{1}{r}-\e}u_i\|_{\widehat{L^{\infty}_{xt}}}
\|J^{-\frac{1}{r}+\frac{1}{\rho'}-\e}u_{2k}\|_{X^{\rho'}_{0,-b'}},
\end{eqnarray*}
where we have used \eqref{204} and the trivial endpoint of the Hausdorff-Young inequality. Finally Corollary \ref{xBill}
(for the first) and Sobolev type embeddings (for the other factors) give the upper bound
$$\|\pd_x^{l_0}\prod_{i=1}^{2k+1}\pd_x^{l_i}u_i \|_{X^r_{0,b'}} \ls \prod_{i=1}^{2k+1}\|u_i\|_{X^r_{0,b}}.$$
Thus we have shown that the desired estimate for the contribution from this subcase holds true for $s=0$ and hence
also for any $s>0$.

\quad

2. $|\xi_{2}|\sim |\xi_1|$. If additionally $||\xi|-|\xi_2|| \le \frac12 |\xi_2|$ and hence $|\xi|\sim |\xi_1|$, we can choose
$u$ and $v$ as in subcase 1., so that the symbol of the multiplier $M^*_{\rho',j}(u,v)$ behaves $\sim |\xi_1|^{\frac{2j}{\rho'}}$
and we may argue as before. If on the contrary $||\xi|-|\xi_2|| \ge \frac12 |\xi_2|$, we choose 
$u=M_{r,j}(u_1,u_{2k+1})u_3 \cdots u_{2k}$ with frequency $\xi - \xi_2$ and $v=u_2$ with frequency $\xi_2$. Then
the symbol of $M^*_{\rho',j}(u,v)$ becomes $(|\xi - \xi_2||\xi + \xi_2|((\xi - \xi_2)^{2j-2}+ \xi_2^{2j-2}))^{\frac{1}{\rho'}}
\sim |\xi_1|^{\frac{2j}{\rho'}}$. So we may again argue as in subcase 1. with $u_2$ and $u_{2k}$ interchanged.

\end{proof}

\section{A counterexample - proof of Proposition \ref{counter}}
\label{contra}

In \cite{KPV01} the two parameter family
$$u_{N\omega}(x,t)=\sqrt{6}\omega e^{i(t(N^3-N\om^2)+Nx)}\mbox{sech}(\om(x-(\om^2-3N^2)t))$$
of solutions of a complex mKdV equation was used to show that the Cauchy problem for this equation is ill-posed in $H^s(\R)$
in the $C^0$-uniform sense, if $s<\frac14$. Here we are going to use essentially the same family of functions - with a slightly
different choice of parameters, depending on $j$ - to prove Proposition \ref{counter}. The first step is to modify the $u_{N\om}$
appropriately, so that they solve an equation of type \eqref{complex}.

\begin{lemma}
 \label{solution}
Let $f(z) = {\rm{sech}}(z)$,
\begin{eqnarray*}
 \delta=\sum_{n=0}^j (-1)^n \binom{2j+1}{2n}N^{2(j-n)+1}\om^{2n},\\
 c = \sum_{n=0}^j (-1)^{n+1} \binom{2j+1}{2n+1}N^{2(j-n)}\om^{2n},
\end{eqnarray*}
and
$$v_{N\om}(x,t)=\om e^{i(Nx+\delta t)}f(\om(x-ct)).$$
Then, for a specific choice of the coefficients $a_{jkl}$, $v_{N\om}$ is a solution of \eqref{complex}.
\end{lemma}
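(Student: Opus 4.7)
The plan is direct substitution. Write $\phi=Nx+\delta t$ and $\theta=\om(x-ct)$, so that $v_{N\om}=\om e^{i\phi}f(\theta)$. The key observation is that $|v_{N\om}|^{2}=\om^{2}f(\theta)^{2}$ is real and depends only on $\theta$; hence the factors $|v_{N\om}|^{2k}$ in \eqref{complex} carry no phase, and all spatial derivatives of $v_{N\om}$ are computed through the operator identity $\pd_x^{m}(g(\theta)e^{i\phi})=(iN+\om\pd_\theta)^{m}g(\theta)\,e^{i\phi}$ together with $\pd_x^{l}|v_{N\om}|^{2k}=\om^{2k+l}\pd_\theta^{l}(f^{2k})$. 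Substituting into \eqref{complex} and factoring out the common $\om e^{i\phi}$ reduces the PDE to a scalar identity in $\theta$, namely
\[
i\delta f-\om c f'+(-1)^{j+1}(iN+\om\pd_\theta)^{2j+1}f+\sum_{k=1}^{j}\sum_{l=0}^{2(j-k)+1}a_{jkl}\om^{2k+l}\pd_\theta^{l}(f^{2k})(iN+\om\pd_\theta)^{2(j-k)+1-l}f=0.
\]

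First I would expand the linear operator $(iN+\om\pd_\theta)^{2j+1}$ by the binomial theorem and split it into its real part (from $m$ odd, producing $\pd_\theta^{2n+1}f$) and its imaginary part (from $m$ even, producing $\pd_\theta^{2n}f$). A short computation shows that the pure $f$- and $f'$-coefficients of the linear part are cancelled by $i\delta f$ and $-\om c f'$ precisely under the choice of $\delta$ and $c$ given in the statement. The residual linear contribution then groups neatly as
\[
i\sum_{n=1}^{j}(-1)^{n}\tbinom{2j+1}{2n}N^{2(j-n)+1}\om^{2n}\bigl(f-f^{(2n)}\bigr)+\sum_{n=1}^{j}(-1)^{n}\tbinom{2j+1}{2n+1}N^{2(j-n)}\om^{2n+1}\bigl(f'-f^{(2n+1)}\bigr).
\]

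Next I would exploit the sech identity $f''=f-2f^{3}$. Iterating, every $f-f^{(2n)}$ becomes an odd polynomial in $f$ of degree between $3$ and $2n+1$, and every $f'-f^{(2n+1)}$ equals $f'$ times an even polynomial in $f$ of degree between $2$ and $2n$. Applying the same reduction to each factor $(iN+\om\pd_\theta)^{M}f$ inside the nonlinear sum, the entire left-hand side becomes a polynomial in the two independent quantities $f$ and $f'$, whose coefficients are polynomials in $N$ and $\om$ depending linearly on the parameters $a_{jkl}$.

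The final step is to read off a finite linear system by equating the coefficients of the distinct monomials $f^{2m+1}$ and $f^{2m}f'$ to zero, and to exhibit a solution in the $a_{jkl}$. The natural solution recovers a complex variant of the $j$th higher-order mKdV equation, for which $v_{N\om}$ is the one-soliton obtained from the $N=0$ profile $\om\,\mathrm{sech}(\om(x-(-1)^{j+1}\om^{2j}t))$ via the gauge/Galilean transform $v(x,t)\mapsto e^{i(Nx+\delta t)}v(x-ct,t)$. The main obstacle is the combinatorial bookkeeping required to control the cross-terms from the binomial expansions of $(iN+\om\pd_\theta)^{2(j-k)+1-l}$ and to verify solvability of the induced linear system on the $a_{jkl}$; this is most efficiently done either by a direct case-by-case verification exploiting the hierarchy structure, or by appealing to the classical existence of solitary-wave solutions for the complex mKdV hierarchy.
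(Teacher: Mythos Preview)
Your reduction is correct and follows the same route as the paper: substitute the ansatz, expand the linear operator, cancel the pure $f$- and $f'$-terms by the specific choice of $\delta$ and $c$, and use $f''=f-2f^{3}$ (together with $f'^{2}=f^{2}-f^{4}$) to rewrite everything as a polynomial in $f$ and $f'$. The problem is that you stop precisely where the actual work begins. You say yourself that ``the main obstacle is the combinatorial bookkeeping \dots\ to verify solvability of the induced linear system on the $a_{jkl}$'' and then defer to a case-by-case check or to an unspecified classical existence result. Neither of these is a proof for general $j$; the second would in any event require you to identify the equation first, which is what you are trying to do.

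The paper fills this gap with a clean counting-and-triangularity argument, and this is the missing idea. After cancellation the residual is a linear combination of the $j(j+1)$ monomials
\[
F=\bigl(M^{2j-1}f^{3},\,M^{2j-2}f^{2}f',\,\dots,\,f^{2}f';\ M^{2j-3}f^{5},\,\dots;\ \dots;\ Mf^{2j+1},\,f^{2j}f'\bigr),
\]
ordered first by degree in $f$ and then by decreasing power of $M$ (here $M=N/\om$, having reduced to $\om=1$ by scaling). On the other side one lists the $j(j+1)$ nonlinear expressions $(\pd_x^{l}|u|^{2k})\pd_x^{2(j-k)+1-l}u$ in the matching order and observes, by computing only the \emph{leading} term of each, that the change-of-basis matrix from $F$ to this list is lower triangular with nonzero diagonal. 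Invertibility is then immediate, and the coefficients $a_{jkl}$ are read off from the inverse. Two practical points that make this work cleanly and that you should incorporate: (i) set $\om=1$ first and recover the general case at the end via $v_{N\om}(x,t)=\om\,u_{N/\om}(\om x,\om^{2j+1}t)$, which removes the need to track $N$ and $\om$ separately; (ii) order both systems by degree and then by the number of derivatives on the single factor $u$, so that each nonlinear term contributes a new leading monomial not produced by the earlier ones.
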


\begin{proof}
 We define
$$u(x,t)=u_M(x,t)=e^{i(Mx+\delta_0 t)}f(x-c_0t)$$
with
$$ \delta_0=\sum_{n=0}^j (-1)^n \binom{2j+1}{2n}M^{2(j-n)+1}\quad \mbox{and}\quad
 c_0 = \sum_{n=0}^j (-1)^{n+1} \binom{2j+1}{2n+1}M^{2(j-n)}.$$
Then
\begin{equation}
 \label{c1}
\pd_tu(x,t)=e^{i(Mx+\delta_0 t)}(i\delta_0f(x-c_0t)-c_0f'(x-c_0t))
\end{equation}
and
\begin{multline}
 \label{c2}
\pd_x^{2j+1}u(x,t)=e^{i(Mx+\delta_0 t)}\sum_{k=0}^{2j+1}\binom{2j+1}{k}(iM)^{2j+1-k}f^{(k)}\\
=(-1)^{j+1}e^{i(Mx+\delta_0 t)}\sum_{n=0}^{j}(-1)^{n+1}M^{2(j-n)}
\left[ \binom{2j+1}{2n}iMf^{(2n)}+\binom{2j+1}{2n+1}f^{(2n+1)}\right].
\end{multline}
For convenience we omit the argument $x-c_0t$ of $f^{(k)}$. Using the easily checked identities $f'^2=f^2-f^4$ and $f''=f-2f^3$
we obtain for the higher derivatives of $f$
$$f^{(2n)}=\sum_{m=0}^n c_{nm}f^{2m+1} \quad \mbox{and} \quad f^{(2n+1)}=\sum_{m=0}^n c_{nm}(2m+1)f^{2m}f',$$
where we leave the coefficients $c_{nm}$ unspecified except for the fact that $c_{n0}=1$ for all $n\ge 0$. Inserting into
\eqref{c2} we obtain
\begin{equation}
 \label{c3}
\pd_x^{2j+1}u(x,t)=(-1)^{j+1}e^{i(Mx+\delta_0 t)}\sum_{m=0}^{j}f^{2m} \sum\nolimits_n,
\end{equation}
where
$$\sum\nolimits_n:=\sum_{n=m}^j(-1)^{n+1}c_{nm}M^{2(j-n)}\left[\binom{2j+1}{2n}iMf+\binom{2j+1}{2n+1}(2m+1)f'\right].$$
Similarly we see that
\begin{equation}
 \label{c4}
\pd_x^{2j}u(x,t)=e^{i(Mx+\delta_0 t)}\sum_{k=0}^{2j}\binom{2j}{k}(iM)^{2j-k}f^{(k)}
=(-1)^{j}e^{i(Mx+\delta_0 t)}\sum_{m=0}^{j}f^{2m} \sum\nolimits_n^{'}
\end{equation}
with
$$\sum\nolimits_n^{'}:=\sum_{n=m}^j(-1)^{n}c_{nm}M^{2(j-n)}\left[\binom{2j}{2n}f-\frac{i}{M}\binom{2j}{2n+1}(2m+1)f'\right].$$
Now $\delta_0$ and $c_0$ are chosen in such a way that the linear terms in \eqref{c1} and \eqref{c3} cancel, which gives
$$\pd_tu(x,t) + (-1)^{j+1}\pd_x^{2j+1}u(x,t)=e^{i(Mx+\delta_0 t)}\sum_{m=1}^{j}f^{2m} \sum\nolimits_n,$$
where the double sum on the right hand side is a linear combination of 
\begin{eqnarray*}
 F:=&\big(M^{2j-1}f^3,M^{2j-2}f^2f',\dots,Mf^3,f^2f';\\
& M^{2j-3}f^5,M^{2j-4}f^4f',\dots,Mf^5,f^4f';\\
& \vdots \\
& M^{3}f^{2j-1},M^{2}f^{2j-2}f',Mf^{2j-1},f^{2j-2}f';\\
& Mf^{2j+1},f^{2j}f'\big),
\end{eqnarray*}
which is ordered firstly by increasing degree and secondly (for fixed degree) by decreasing powers of $M$. Next we consider
the system
\begin{eqnarray*}
 U:= & \big(|u|^2\pd_x^{2j-1}u,(\pd_x|u|^2)\pd_x^{2j-2}u,\dots,(\pd_x^{2j-2}|u|^2)\pd_xu,(\pd_x^{2j-1}|u|^2)u;\\
& |u|^4\pd_x^{2j-3}u,(\pd_x|u|^4)\pd_x^{2j-4}u,\dots,(\pd_x^{2j-4}|u|^4)\pd_xu,(\pd_x^{2j-3}|u|^4)u;\\
& \vdots \\
& |u|^{2j-2}\pd_x^{3}u,(\pd_x|u|^{2j-2})\pd_x^{2}u,(\pd_x^{2}|u|^{2j-2})\pd_xu,(\pd_x^{3}|u|^{2j-2})u;\\
& |u|^{2j}\pd_xu, (\pd_x|u|^{2j})u\big),
\end{eqnarray*}
which is ordered in the same manner by degree and secondly by the decreasing number of derivatives on the single factor $u$. The length of both systems is $j(j+1)$. Concerning the terms $(\pd_x^{l}|u|^{2k})\pd_x^{2(j-k)+1-l}u$ with $1\le k\le j$,
$0 \le l \le 2(j-k)+1$, which appear in $U$, we observe first, that for even $l$
\begin{equation}
 \label{c5}
\pd_x^{l}|u|^{2k}= \sum_{\nu =0}^{\frac{l}{2}}b_{lk\nu}f^{2(k+\nu)}
\end{equation}
and for odd $l$
\begin{equation}
 \label{c6}
\pd_x^{l}|u|^{2k}= \sum_{\nu =0}^{\frac{l-1}{2}}\widetilde{b}_{lk\nu}f^{2(k+\nu)-1}f'
\end{equation}
with nonvanishing coefficients $b_{lk\nu}$ and $\widetilde{b}_{lk\nu}$. Combining \eqref{c3} and \eqref{c5} we see that for $l$
even $e^{-i(Mx+\delta_0 t)}(\pd_x^{l}|u|^{2k})\pd_x^{2(j-k)+1-l}u$ is a linear combination of members of the system $F$ with leading (in the sense of
the order of $F$) term $M^{2(j-k)+1-l}f^{2k+1}$. Essentially the same holds true for odd $l$, as we obtain from \eqref{c4} and
\eqref{c6}; in this case the leading term is $M^{2(j-k)+1-l}f^{2k}f'$. Because of $f'^2=f^2-f^4$ the product of the derivatives
in \eqref{c4} and \eqref{c6} produces terms of the same kind, but not of leading order, so that no cancellations occur. Now read
$F$ and $U$ als column vectors. Then the preceeding considerations show that there is a triangular $j(j+1) \times j(j+1)$-matrix $A$
with nonvanishing diagonal entries, such that $U=e^{i(Mx+\delta_0 t)}AF$. Since $A$ is invertible,
$e^{i(Mx+\delta_0 t)}\sum_{m=1}^{j}f^{2m} \sum\nolimits_n$ can be reexpressed in terms of $U$, which shows that $u$ in fact
solves an equation of type \eqref{complex}. Finally we have that $v_{N\om}(x,t)=\om u_{\frac{N}{\om}}(\om x, \om ^{2j+1}t)$, which
solves the same equation.
\end{proof}

Inserting the above family of solutions into the proof of \cite[Theorem 1.2]{KPV01} we obtain our ill-posedness result in
Proposition \ref{counter}. We will be brief, because only two modifications have to be made. Firstly, the exponents have to
be adjusted to the more general data spaces, as was already carried out for $j=1$ in \cite[Section 5]{G04}. Secondly, we have
to take care of the higher propagation speed $c \sim N^{2j}$ (instead of $N^2$) of the solutions of the higher order equations.
This leads to a faster separation of two highly concentrated solutions starting close together and hence to the increasing
(with $j$) lower threshold for uniformly continuous dependence.

\begin{proof}[Proof of Proposition \ref{counter}]
 We choose parameters $N \rightarrow \infty$, $\om = N^{-sr'}$, and $N_{1,2}\sim N$ with $|N_1-N_2|=\frac{C}{T}N^{sr'-(2j-1)}$,
where $C$ is a large constant and $T>0$ the assumed lifespan. Let $f(z) = {\rm{sech}}(z)$ as in the previous lemma. Then, due
to our choices and the assumption $s>-\frac{1}{r'}$, the function $\F_x f (\tfrac{\cdot - N}{\om})$ is concentrated around $N$,
so that for $v_{N\om}$ as in Lemma \ref{solution} we have
$$\|v_{N_k \om}(0)\|_{\widehat{H}^r_s(\R)} \ls 1,\qquad k=1,2.$$
A short computation along the lines of \cite[(2.10) on p. 626]{KPV01} gives
$$\|v_{N_1 \omega}(0) - v_{N_2 \omega}(0)\|_{\widehat{H}^r_s(\R)}\ls \omega ^{-\frac{1}{r}}N^s |N_1 -N_2| =
 \frac{C}{T} N^{2sr'-(2j-1)} \longrightarrow 0,$$
if $s<\frac{2j-1}{2r'}$, as assumed.
On the other hand, we have for any positive $T$
\begin{eqnarray*}
\|v_{N_1 \omega}(T) - v_{N_2 \omega}(T)\|_{\widehat{H}^r_s(\R)} & \gs &  N^s \|v_{N_1 \omega}(T) - v_{N_2 \omega}(T)\|_{\widehat{H}^r_0(\R)} \\ 
& \gs &  N^s \sup_{\|\phi\|_{L^r} \le 1}\langle v_{N_1 \omega}(T) - v_{N_2 \omega}(T), \F_x^{-1} \phi\rangle _{L^2}.
\end{eqnarray*}
Now the $v_{N_k \omega}(T), \,\,k=1,2$, are concentrated on intervals $I_k$ of size $\omega ^{-1}$ around
$$c_kT =  \sum_{n=0}^j (-1)^{n+1} \binom{2j+1}{2n+1}N_k^{2(j-n)}\om^{2n} T \sim N_k^{2j}T,$$
which are disjoint for $N^{2j-1}|N_1-N_2| T \gg \omega ^{-1} =N^{sr'}$, as is guaranteed by our choice of $N_1-N_2$. Taking
$$\F_x^{-1}\phi= c \omega ^{\frac{1}{r'}} \chi_{I_1} \frac{\overline{v_{N_1 \omega}(T)}}{|v_{N_1 \omega}(T)|},$$
where the factor $\omega ^{\frac{1}{r'}}$ ensures that $\|\phi\|_{L^r} \le 1$, we see that $\|v_{N_1 \omega}(T) - v_{N_2 \omega}(T)\|_{\widehat{H}^r_s(\R)}$ is bounded from below by
$$c N^s \omega ^{\frac{1}{r'}} \int |v_{N_1 \omega}(x,T)|dx = c \omega \int f(\omega x) dx = c .$$
Thus the mapping data upon solution from bounded subsets of $\widehat{H}^r_s(\R)$ to any solution space $X_T$ continuously embedded in $C([0,T],\widehat{H}^r_s(\R))$ cannot be uniformly continuous, if $-\frac{1}{r'}<s<\frac{2j-1}{2r'}$.
\end{proof}

\section{Estimates for the higher order KdV equations}
\label{moreestimates}

In this section we prove the estimates necessary for Theorem \ref{hoKdVlocal}. Here we rely heavily on the bilinear estimate
\begin{equation}
  \label{mixedbill}
\n{M_{p,j}(u_1, u_2)}{\widehat{L^r_x}(\widehat{L^p_t})} \ls \n{u_1}{X^{r,p}_{s_1,b}}\n{u_2}{X^{r,p}_{s_2,b}},
\end{equation}
which holds true for $1<r\le p \le 2$, $b>\frac{1}{p}$ and $s_{1,2}\ge 0$ with $s_1+s_2>\frac{1}{r}-\frac{1}{p}$. Inequality
\eqref{mixedbill} is easily obtained from \eqref{xBronstein} in Corollary \ref{xBill} by Sobolev type embeddings. In addition
we will need the following weaker version, which is usefull, if the frequencies of the two factors are very close together,
that is, if $|\xi_1-\xi_2| \ll |\xi_1| \sim |\xi_2|$. For its proof we go back to the arguments of Kenig, Ponce, and Vega in
\cite{KPV96a}, \cite{KPV96b}.

\begin{lemma}
 \label{weaksmooth}
Let $1<r<p\le 2$, $0<\e \ll 1$, and $b>\frac{1}{p}$. Then, with $P_{\ge1}=\F_x^{-1}\chi_{\{|\xi|\ge1\}}\F_x$,
\begin{equation}
 \label{weaksmoothest}
\n{D_x^{\frac{2j-1}{2r}-\e}P_{\ge1}(u_1 u_2)}{\widehat{L^r_x}(\widehat{L^p_t})} \ls \n{u_1}{X^{r,p}_{0,b}}\n{u_2}{X^{r,p}_{0,b}}.
\end{equation}
\end{lemma}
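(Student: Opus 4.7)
Following the approach of Kenig, Ponce, and Vega \cite{KPV96a, KPV96b}, I would decompose the product $u_1 u_2$ dyadically in the input frequencies $|\xi_i|\sim N_i$ and split into a transversal regime $|\xi_1-\xi_2|\sim \max(N_1,N_2)=:N$ and a parallel regime $N_1\sim N_2 \sim N$ with $|\xi_1-\xi_2|\ll N$. The sharp bilinear estimate \eqref{mixedbill} closes the transversal regime; the point of this weaker lemma is precisely to handle the parallel regime, where the bilinear symbol $m_j(\xi_1,\xi_2)=|\xi_1+\xi_2||\xi_1-\xi_2|(\xi_1^{2(j-1)}+\xi_2^{2(j-1)})$ degenerates.

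\textbf{Transversal regime.} Here $m_j \gs |\xi| N^{2j-1} \gs N^{2j-1}$ thanks to the $P_{\ge 1}$ cutoff, so the bilinear multiplier $M_{p,j}$ already carries a smoothing of order at least $(2j-1)/p$. Combining Corollary \ref{xBill} with Bernstein-type embeddings on the dyadic pieces to reconcile the mismatch between $r$ and $p$ in the $\widehat{L^r_x}(\widehat{L^p_t})$ target and the $X^{r,p}_{0,b}$ inputs (using a judicious choice of intermediate exponents $r_{1,2}$ satisfying $\frac{1}{p}+\frac{1}{r}=\frac{1}{r_1}+\frac{1}{r_2}$), this smoothing outweighs the requested $|\xi|^{\frac{2j-1}{2r}-\e}$, and the small $\e>0$ absorbs the remaining slack in the dyadic summation.

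\textbf{Parallel regime.} Here $\xi_1$ and $\xi_2$ necessarily have the same sign, so $|\xi|\sim N$, and $m_j$ degenerates. The key observation is the bilinear resonance identity: a direct Taylor expansion of $(\xi_1+\xi_2)^{2j+1}$ around $\xi_1=\xi_2$ yields
$$|\xi^{2j+1}-\xi_1^{2j+1}-\xi_2^{2j+1}| \gs N^{2j+1}.$$
Since this equals $|\sigma_0-\sigma_1-\sigma_2|$ with $\sigma_0=\tau-\xi^{2j+1}$ and $\sigma_i=\tau_i-\xi_i^{2j+1}$, at least one of $\langle\sigma_0\rangle,\langle\sigma_1\rangle,\langle\sigma_2\rangle$ must be $\gs N^{2j+1}$. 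I would split accordingly into three subcases. When $\langle\sigma_1\rangle$ (or by symmetry $\langle\sigma_2\rangle$) dominates, the pointwise inequality $\langle\sigma_1\rangle^{-b}\le N^{-(2j+1)b}$ extracts a gain of $N^{-(2j+1)b}$ against the $b$-weight in the input norm; since $(2j+1)b>(2j+1)/p\ge(2j+1)/2>\frac{2j-1}{2r}$ for $r>1$ and $p\le 2$, this gain easily compensates for the requested $N^{\frac{2j-1}{2r}}$, and the residual bilinear quantity is estimated via Plancherel and Young's convolution inequality in $\xi$. When $\langle\sigma_0\rangle$ dominates, I would dualize against a test function $g$ with $\widehat{g}\in L^r_\xi L^p_\tau$, transfer $\langle\sigma_0\rangle^b$ onto $\widehat{g}$, and argue analogously.

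\textbf{Main obstacle.} The technical heart is the careful bookkeeping across the two distinct integrability exponents $r$ and $p$ in the target $\widehat{L^r_x}(\widehat{L^p_t})$ versus the input $X^{r,p}_{0,b}$ norms, and in particular the Bernstein-type frequency localisation embeddings between $\widehat{L^p_\xi}$ and $\widehat{L^r_\xi}$, each of which costs a small positive power of $N$ that must be paid out of the gains described above. The $\e>0$ in the statement represents exactly the slack left over after this balancing, and the strict inequality $r<p$ is essential for this slack to be positive.
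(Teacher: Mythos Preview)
Your reduction to the parallel regime $|\xi_1-\xi_2|\ll|\xi_1|\sim|\xi_2|$ matches the paper's first step. The gap is in your treatment of that regime. The target norm $\widehat{L^r_x}(\widehat{L^p_t})$ carries \emph{no} modulation weight $\langle\sigma_0\rangle^{b'}$, so in the subcase where $\langle\sigma_0\rangle$ is dominant there is nothing to extract: your proposed ``dualize and transfer $\langle\sigma_0\rangle^b$ onto the test function $g$'' cannot work, since $g$ lives in plain $L^r_\xi L^p_\tau$ and $\langle\sigma_0\rangle^b g$ has no controlled norm. This is not a negligible subcase either---for products of free solutions one has $\sigma_1=\sigma_2=0$ and hence $|\sigma_0|\sim N^{2j+1}$, so the $\sigma_0$-dominant case is in fact the principal one.

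The paper avoids case splitting on the $\sigma_i$ altogether. After H\"older in $\tau_1$ combined with \cite[Lemma~4.2]{GTV97} one obtains a factor $\langle\sigma_{res}\rangle^{-b}$ with $\sigma_{res}=\tau-\xi_1^{2j+1}-\xi_2^{2j+1}$; the smoothing then comes from integrating $\langle\sigma_{res}\rangle^{-bp}$ in $\xi_1$ (a second H\"older) via the change of variables $x=\xi_1-\tfrac{\xi}{2}\mapsto h(x)$ and the Jacobian bound $|h'(x)|\gtrsim|h(x)|^{1/2}|\xi|^{(2j-1)/2}$, which yields the gain $|\xi|^{-\frac{2j-1}{2r}+\e}$ directly. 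A dyadic decomposition in $|x|$ together with H\"older in $\xi_1$ with exponents $(r',p')$ handles the mismatch between $r$ and $p$; the $\e$-loss arises from summing the dyadic pieces, and this is exactly where $r<p$ is needed. The quantity $\sigma_0$ never enters.
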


\begin{proof}
 For $i=1,2$ we choose $f_i$ with $\|f_i\|_{L^{r'}_{\xi}(L^{p'}_{\tau})}=\n{u_1}{X^{r,p}_{0,b}}$, so that \eqref{weaksmoothest}
can be written as
$$\||\xi|^{\frac{2j-1}{2r}-\e}\int_* d\xi_1d\tau_1\frac{f_1(\xi_1,\tau_1)f_2(\xi_2,\tau_2)}{\lb \sigma_1\rb^b\lb \sigma_2\rb^b}\|_{L^{r'}_{\xi}(L^{p'}_{\tau})}
\ls\|f_1\|_{L^{r'}_{\xi}(L^{p'}_{\tau})}\|f_2\|_{L^{r'}_{\xi}(L^{p'}_{\tau})}.$$
In view of the estimate \eqref{xBronstein} in Lemma \ref{xBill} it is sufficient to consider the frequency range
$|\xi_1-\xi_2| \ll |\xi_1|$, where $|\xi|$ is large. We decompose dyadically with respect to $|\xi_1-\tfrac{\xi}{2}|=\frac12 |\xi_1-\xi_2|$ and consider the contributions
$$\||\xi|^{\frac{2j-1}{2r}-\e}\int_* d\xi_1d\tau_1 \chi_{A_k}(\xi_1)\frac{f_1(\xi_1,\tau_1)f_2(\xi_2,\tau_2)}{\lb \sigma_1\rb^b\lb \sigma_2\rb^b}\|_{L^{r'}_{\xi}(L^{p'}_{\tau})},$$
where $A_k=\{\xi_1:|\xi_1-\tfrac{\xi}{2}|\sim 2^{-k}\}$ for $k\ge 1$, and $A_0=\{\xi_1:1\le |\xi_1-\tfrac{\xi}{2}|\ll |\xi_1|\}$.
We apply H\"older's inequality and \cite[Lemma 4.2]{GTV97} to obtain
$$\int d\tau_1\frac{f_1(\xi_1,\tau_1)f_2(\xi_2,\tau_2)}{\lb \sigma_1\rb^b\lb \sigma_2\rb^b}
\ls \lb\sigma_{res}\rb^{-b}\left(\int_* d\tau_1|f_1(\xi_1,\tau_1)f_2(\xi_2,\tau_2)|^{p'}\right)^{\frac{1}{p'}},$$
where $\sigma_{res}=\tau-\xi_1^{2j+1}-\xi_2^{2j+1}=\tau-\frac{\xi^{2j+1}}{2^{2j}}-h(x)$ with $x=\xi_1-\tfrac{\xi}{2}$ and
$$h(x)=\xi \sum_{l=1}^j\binom{ 2j+1 }{ 2l}(\tfrac{\xi}{2})^{2(j-l)}x^{2l}\sim \xi x^2 (\xi^{2(j-1)}+x^{2(j-1)}),$$
cf. \eqref{g1}. We have
\begin{equation}
 \label{h1}
h'(x)=\xi x\sum_{l=1}^j\binom{ 2j+1 }{ 2l}(\tfrac{\xi}{2})^{2(j-l)}2lx^{2(l-1)}\sim \xi x (\xi^{2(j-1)}+x^{2(j-1)}),
\end{equation}
which implies
\begin{equation}
 \label{h2}
|h'(x)| \gs (|h(x)||\xi|(\xi^{2(j-1)}+x^{2(j-1)}))^{\frac{1}{2}}\gs |h(x)|^{\frac12}|\xi|^{\frac{2j-1}{2}}.
\end{equation}
Now we treat first the case where $k=0$. A second H\"older application gives
\begin{eqnarray*}
 & \int_* d\xi_1d\tau_1 \chi_{A_0}(\xi_1)\frac{f_1(\xi_1,\tau_1)f_2(\xi_2,\tau_2)}{\lb \sigma_1\rb^b\lb \sigma_2\rb^b}\\
\ls & \int_* d\xi_1 \chi_{A_0}(\xi_1)|\xi_1-\tfrac{\xi}{2}|^{\frac{1}{p}}\lb\sigma_{res}\rb^{-b}
\lb\xi_1-\tfrac{\xi}{2}\rb^{-\frac{1}{p}}\left(\int_* d\tau_1|f_1(\xi_1,\tau_1)f_2(\xi_2,\tau_2)|^{p'}\right)^{\frac{1}{p'}}\\
\ls & \left( \int_{A_0} d\xi_1 |\xi_1-\tfrac{\xi}{2}| \lb\sigma_{res}\rb^{-bp}\right)^{\frac{1}{p}}
\left(\int_* d\xi_1d\tau_1\lb\xi_1-\tfrac{\xi}{2}\rb^{-\frac{p'}{p}}|f_1(\xi_1,\tau_1)f_2(\xi_2,\tau_2)|^{p'}\right)^{\frac{1}{p'}}.
\end{eqnarray*}
For the first factor we obtain the upper bound
$$\left(\int dh |\xi|^{-(2j-1)}\lb\tau-\frac{\xi^{2j+1}}{2^{2j}}-h\rb^{-bp}\right)^{\frac{1}{p}}
\ls |\xi|^{-\frac{2j-1}{p}}\ls |\xi|^{-\frac{2j-1}{2r}+\e},$$
so that the $|\xi|$-factors cancel. Using Fubini we arrive at
\begin{eqnarray*}
& \||\xi|^{\frac{2j-1}{2r}-\e}\int_* d\xi_1d\tau_1 \chi_{A_0}(\xi_1)\frac{f_1(\xi_1,\tau_1)f_2(\xi_2,\tau_2)}{\lb \sigma_1\rb^b\lb \sigma_2\rb^b}\|_{L^{p'}_{\tau}}\\
\ls & \left(\int_* d\xi_1 \lb\xi_1-\tfrac{\xi}{2}\rb^{-\frac{p'}{p}}
 \|f_1(\xi_1,\cdot)\|^{p'}_{L^{p'}_{\tau}}\|f_2(\xi_2,\cdot)\|^{p'}_{L^{p'}_{\tau}}\right)^{\frac{1}{p'}}\\
\ls & \left(\int_* d\xi_1 
 \|f_1(\xi_1,\cdot)\|^{r'}_{L^{p'}_{\tau}}\|f_2(\xi_2,\cdot)\|^{r'}_{L^{p'}_{\tau}}\right)^{\frac{1}{r'}}.
\end{eqnarray*}
Taking $\|\quad\|_{L^{r'}_{\xi}}$ of the latter, we obtain the desired bound for the contribution from $A_0$. For $k\ge 1$ the
argument is similar. Lemma 4.1 from \cite{GTV97} and H\"older's inequality give
\begin{multline*}
\int_* d\xi_1d\tau_1 \chi_{A_k}(\xi_1)\frac{f_1(\xi_1,\tau_1)f_2(\xi_2,\tau_2)}{\lb \sigma_1\rb^b\lb \sigma_2\rb^b}\\
\ls \left(\int_*d\xi_1 \chi_{A_k}(\xi_1)\lb\sigma_{res}\rb^{-bp}\right)^{\frac{1}{p}}\left(\int_*d\xi_1 d\tau_1\chi_{A_k}(\xi_1)|f_1(\xi_1,\tau_1)f_2(\xi_2,\tau_2)|^{p'}\right)^{\frac{1}{p'}}.
\end{multline*}
For the first factor we use \eqref{h1} and \eqref{h2} to get
\begin{multline*}
 \left( \int_{|x|\sim 2^{-k}}dx\lb\tau-\frac{\xi^{2j+1}}{2^{2j}}-h(x)\rb^{-bp}\right)^{\frac{1}{p}}
\ls\left( \int_{|x|\sim 2^{-k}}\frac{dh}{|h'(x)|}\lb\tau-\frac{\xi^{2j+1}}{2^{2j}}-h\rb^{-bp}\right)^{\frac{1}{p}}\\
\ls \left( \int_{|x|\sim 2^{-k}}dh |h|^{\frac{\theta -1}{2}}|x|^{-\theta}|\xi|^{-\frac{(2j-1)(1+\theta)}{2}}\lb\tau-\frac{\xi^{2j+1}}{2^{2j}}-h\rb^{-bp}\right)^{\frac{1}{p}}
\ls 2^{\frac{k\theta}{p}}|\xi|^{-\frac{(2j-1)(1+\theta)}{2p}},
\end{multline*}
whenever $\theta \in [0,1]$. We choose $\theta$ slightly smaller than $\frac{p-r}{r}$, so that $\frac{(2j-1)(1+\theta)}{2p}
= \frac{2j-1}{2r}-\e$ and the $|\xi|$-factors cancel again. Hence
\begin{eqnarray*}
& \||\xi|^{\frac{2j-1}{2r}-\e}\int_* d\xi_1d\tau_1 \chi_{A_k}(\xi_1)\frac{f_1(\xi_1,\tau_1)f_2(\xi_2,\tau_2)}{\lb \sigma_1\rb^b\lb \sigma_2\rb^b}\|_{L^{p'}_{\tau}}\\
\ls & 2^{\frac{k\theta}{p}}\left(\int_* d\xi_1 \chi_{A_k}(\xi_1)
 \|f_1(\xi_1,\cdot)\|^{p'}_{L^{p'}_{\tau}}\|f_2(\xi_2,\cdot)\|^{p'}_{L^{p'}_{\tau}}\right)^{\frac{1}{p'}}\\
\ls & 2^{k(\frac{\theta +1}{p}-\frac{1}{r})}\left(\int_* d\xi_1 
 \|f_1(\xi_1,\cdot)\|^{r'}_{L^{p'}_{\tau}}\|f_2(\xi_2,\cdot)\|^{r'}_{L^{p'}_{\tau}}\right)^{\frac{1}{r'}}
\end{eqnarray*}
where $\frac{\theta +1}{p}-\frac{1}{r}<0$. It remains to take the $L^{r'}_{\xi}$-norm and to sum up over $k$.
\end{proof}

Remark: Similar arguments show that \eqref{weaksmoothest} is also true for $p=r$ and $\e =0$. In this case the dyadic decomposition is not necessary.

\quad

We turn to the estimate for the quadratic nonlinearities. Here the resonance relation
\begin{equation}
 \label{rr2}
|\xi_1\xi_2(\xi_1+\xi_2)|(\xi_1^{2j-2}+\xi_2^{2j-2}) \ls \sum_{i=0}^2 \lb\sigma_i\rb
\end{equation}
is essential, to which Lemma \ref{resonance} reduces for $\xi_3=0$.

\begin{theorem}
 \label{quadratic}
Let $j\ge 2$, $l_0 \ge 1$, $l_{1,2}\ge0$ with $l_0+l_1+l_2=2j-1$, and $1<r\le p \le \frac{2j}{2j-1}$ as well as
$$s> \frac{2j+1}{p}-\frac{2j-1}{2r}-2= j - \frac12 - \frac{2j+1}{p'}+\frac{2j-1}{2r'}.$$
Then there exists $b'>-\frac{1}{p'}$, such that for all $b>\frac{1}{p}$
$$\|\pd_x^{l_0}(\pd_x^{l_1}u_1\pd_x^{l_2}u_2)\|_{X^{r,p}_{s,b'}}\ls \|u_1\|_{X^{r,p}_{s,b}}\|u_2\|_{X^{r,p}_{s,b}}.$$
\end{theorem}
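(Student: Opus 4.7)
The plan is to prove the estimate by duality, reducing it on the Fourier side to controlling a weighted convolution with kernel $\lb\xi\rb^s|\xi|^{l_0}\lb\sigma_0\rb^{b'}\cdot\lb\xi_1\rb^{-s}\lb\xi_2\rb^{-s}\lb\sigma_1\rb^{-b}\lb\sigma_2\rb^{-b}\cdot|\xi_1|^{l_1}|\xi_2|^{l_2}$, and then running a careful case split. By symmetry I assume $|\xi_1|\ge|\xi_2|$; the output low-frequency piece $|\xi|\ls 1$ is trivial since $l_0\ge 1$ keeps $|\xi|^{l_0}\le 1$ there, so henceforth $|\xi|\gg 1$. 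The central dichotomy is whether $|\xi_1-\xi_2|\gs|\xi_1|$ (the \emph{nonresonant} regime) or $|\xi_1-\xi_2|\ll|\xi_1|$, which together with $|\xi_2|\le|\xi_1|$ forces $\xi_1,\xi_2$ to have the same sign and close magnitudes, so $|\xi_1|\sim|\xi_2|\sim|\xi|$ (the \emph{resonant} regime).

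In the nonresonant regime $m_j(\xi_1,\xi_2)\gs\xi_1^{2j}$, so the bilinear multiplier $M_{p,j}$ from Lemma~\ref{Bill} extracts the full $\frac{2j}{p}$ derivatives of smoothing on the top frequency. Writing $\pd_x^{l_0}(\pd_x^{l_1}u_1\pd_x^{l_2}u_2)\preceq M_{p,j}(D_x^{a_1}u_1,D_x^{a_2}u_2)$ with a suitable splitting $a_1+a_2=2j-1-\frac{2j}{p}$, and absorbing the output weight $\lb\xi\rb^s$ into the top-frequency factor while using $\lb\xi_2\rb^{-s}\le 1$ when $|\xi_2|\le 1$ or $|\xi_2|\sim\lb\xi_2\rb$ otherwise, the estimate \eqref{mixedbill} closes comfortably under the stated threshold, with room to spare. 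So the nonresonant regime is never binding.

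The resonant regime is the heart of the matter: Lemma~\ref{Bill} degenerates, and one relies instead on Lemma~\ref{weaksmooth}, which still yields a smoothing of $\frac{2j-1}{2r}-\e$. The residual loss is recovered from the modulation weights via the resonance relation~\eqref{rr2}, which here reads $\max_i\lb\sigma_i\rb\gs|\xi|^{2j+1}$. I split into three subcases according to which $\sigma_i$ is largest. When $\sigma_0$ is largest, the pointwise trade $\lb\sigma_0\rb^{b'}\le|\xi|^{(2j+1)b'}$ (valid since $b'<0$ and $\sigma_0\gs|\xi|^{2j+1}$) consumes the output modulation weight in exchange for a frequency gain of $|\xi|^{-(2j+1)|b'|}$; combined with Lemma~\ref{weaksmooth} this absorbs the residual $|\xi|^{2j-1-s}$ exactly when $s>2j-1-\frac{2j-1}{2r}-(2j+1)|b'|$, and taking $|b'|\uparrow\frac{1}{p'}$ reproduces the hypothesis after using $\frac{2j+1}{p}-2=2j-1-\frac{2j+1}{p'}$. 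When instead $\sigma_1$ or $\sigma_2$ is largest, an analogous trade is applied on the input side via the dual bilinear estimate (Corollary~\ref{xBill*} and its $p$-mixed analogue), absorbing a fraction of the corresponding $\lb\sigma_i\rb^{-b}$ weight into frequency gain while leaving a compatible residual $\lb\sigma_i\rb$-weight for Sobolev-type embeddings in $\tau$; this subcase is no more restrictive than the $\sigma_0$-largest one.

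The main obstacle is the $\sigma_0$-largest subcase: the trade is sharp, so $b'$ must be chosen just above $-\frac{1}{p'}$ to ensure the gain $|\xi|^{-(2j+1)|b'|}$ barely overcomes the excess, with the residual $\e$ loss from Lemma~\ref{weaksmooth} absorbed into the strict inequality in the hypothesis on $s$. A secondary technical point is the dyadic transition at $|\xi_1-\xi_2|\sim|\xi_1|$ between the nonresonant and resonant regimes, which is handled by a summable dyadic decomposition exactly as in the proof of Lemma~\ref{weaksmooth}.
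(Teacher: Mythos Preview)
Your outline follows essentially the same route as the paper---case split by frequency interaction type, then by which modulation $\sigma_i$ dominates, closing via the bilinear estimate \eqref{mixedbill}, Lemma~\ref{weaksmooth}, and the resonance relation \eqref{rr2}---but two of your claims are false as stated, and both concern the same region you have swept under the rug.

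First, the assertion that the output low-frequency piece $|\xi|\ls 1$ is ``trivial since $l_0\ge 1$'' is wrong: when $|\xi_1|\sim|\xi_2|\gg 1$ with opposite signs (high-high-to-low), you still have up to $2j-2$ derivatives on the inputs and only $\lb\xi_i\rb^{2s}$ from the norms to absorb them; nothing about $|\xi|^{l_0}\le 1$ helps with that. Second, and relatedly, your claim that $m_j(\xi_1,\xi_2)\gs\xi_1^{2j}$ throughout the nonresonant regime is false precisely in this high-high-to-low case: when $|\xi_1+\xi_2|=|\xi|\ll|\xi_1|$ one has only $m_j\sim|\xi|\,|\xi_1|^{2j-1}$, so the bilinear smoothing degenerates by a full factor of $|\xi_1|/|\xi|$ relative to what you assert. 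The paper treats this as a separate subcase (its Subsubcase~1.1.1) and closes it by combining \eqref{mixedbill} with the resonance relation~\eqref{rr2}, which contributes an additional $|\xi|^{\frac{1}{p'}-}|\xi_1|^{\frac{2j}{p'}-}$. (In fact one can check that for $j\ge 2$ the bilinear estimate alone, with the \emph{correct} symbol bound $m_j\sim|\xi||\xi_1|^{2j-1}$, already closes this subcase under a condition weaker than the hypothesis---but not for the reason you give.)

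Your treatment of the $\sigma_1$- (or $\sigma_2$-) largest subcase is also vaguer than it should be: you invoke a ``$p$-mixed analogue'' of Corollary~\ref{xBill*} that the paper does not state. The paper's argument there (its Case~2) is in fact simpler than what you sketch: one uses only \eqref{rr2} to trade $\lb\sigma_1\rb^{-\frac{1}{p}}$ for the frequency gain $|\xi|^{\frac{1}{p}}|\xi_1|^{\frac{2j}{p}}$, discards $\lb\sigma_0\rb^{b'}$, and finishes by H\"older and Sobolev embeddings---no dual bilinear estimate is needed. For $j\ge 2$ this gives the weaker condition $s>\frac{2j+1}{p'}-\frac{1}{r'}-1$, confirming your claim that this subcase is not binding, though again by a different mechanism than you describe.
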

The proof of this theorem consists again of a case by case discussion depending on the relative size of the frequencies
$\xi$, $\xi_1$, $\xi_2$ and the modulations $\sigma_0=\tau - \xi^{2j+1}$, $\sigma_1=\tau_1 - \xi_1^{2j+1}$, and
$\sigma_2=\tau_2 - \xi_2^{2j+1}$. It is similar to the proof of Theorem \ref{cubic} and we shall use partly the notation
introduced at the beginning of Section \ref{nonlin}\footnote{Besides that we sometimes write $a\pm $ for a number $a\pm \e$
with sufficiently small $\e >0$.}. The only new element is the extensive use of the resonance relation, which is standard in
the literature. So we will be brief and avoid repetitions as far as possible. The first part of the proof covers as well the case
$j=1$, while in the second part we restrict ourselves to $j\ge 2$.

\begin{proof}
 The trivial region, where $|\xi_1| \le 1$ and $|\xi_2| \le 1$ (and hence $|\xi| \le 2$) is easily treated for all $b' \le 0$
by Young's and H\"older's inequalities. The region with $|\xi_1| \le 1 \le |\xi_2|$ (or vice versa) is decisive. Here we apply
the estimate \eqref{mixedbill}, which requires especially $p \le \frac{2j}{2j-1}$ in order to control all the $2j-1$
derivatives in the nonlinearity. The estimate in this region is independent of $s \in \R$ and works for all $b' \le 0$. In the
sequel we will always have $|\xi_1| \ge 1$ and $|\xi_2| \ge 1$.

\quad

Case 1: $|\sigma_0 | \ge |\sigma_{1,2}|$. Here we may assume by symmetry that $|\xi_1| \ge |\xi_2|$.

\quad

Subcase 1.1: $|\xi_1| \sim |\xi_2|$.

\quad

Subsubcase 1.1.1: $|\xi| \ll |\xi_1|$. Here we apply the resonance relation \eqref{rr2}, which gives control over
$|\xi|^{-b'}|\xi_1|^{-2jb'}=|\xi|^{\frac{1}{p'}-}|\xi_1|^{\frac{2j}{p'}-}$, and the bilinear estimate \eqref{mixedbill},
which gives a further gain of $|\xi|^{\frac{1}{p}}|\xi_1|^{\frac{2j-1}{p}}$. By this and the $|\xi_1|^{2s}$ from the norms on the
right we have to control the $\lb \xi \rb^{s}$ from the norm on the left, the $|\xi||\xi_1|^{2j-2}$ from the nonlinearity and the
$|\xi_1|^{\frac{1}{r}-\frac{1}{p}+}$ from \eqref{mixedbill}. This works for $s=0$ (and hence for $s \ge 0$), since the 
$|\xi|$-factors almost cancel and $|\xi_1|^{2j-2+\frac{1}{r}-\frac{1}{p}} \ls |\xi_1|^{2j-1+\frac{1}{p'}-}$. If $s<0$, the
factor $\lb \xi \rb^{s}$ is useless, and so we need
\begin{equation}
 \label{cond1}
2j-2+\frac{1}{r}-\frac{1}{p}< \frac{2j-1}{p}+\frac{2j}{p'}+2s, \quad \mbox{i. e.} \quad s>-\frac12-\frac{1}{2r'},
\end{equation}
which is our first condition on $s$.

\quad

Subsubcase 1.1.2: $|\xi| \sim |\xi_1|$ (so that eventually $|\xi_1-\xi_2|\ll |\xi|$). Here we use Lemma \ref{weaksmooth} instead
of estimate \eqref{mixedbill}. Together with the resonance relation this gives a gain of $|\xi|^{\frac{2j-1}{2r}+\frac{2j+1}{p'}-}$.
Taking into account the $2j-1$ derivatives from the nonlinearity and the three $\lb \xi \rb^s$-factors from the norms, we are led
to the condition
\begin{equation}
 \label{cond2}
s> 2j-1-\frac{2j-1}{2r}-\frac{2j+1}{p'}= j-\frac12-\frac{2j+1}{p'}+\frac{2j-1}{2r'}.
\end{equation}
We observe that for $j \ge 2$ and in the admissible range of $p$ the condition \eqref{cond2} implies \eqref{cond1}.

\quad

Subcase 1.2: $|\xi_2| \ll |\xi_1|$ (so that $|\xi| \sim |\xi_1|$). Here we apply again the bilinear estimate \eqref{mixedbill},
which gives a $|\xi_1|^{\frac{2j}{p}}$, while from the resonance relation we get $|\xi_1|^{\frac{2j}{p'}-}|\xi_2|^{\frac{1}{p'}-}$.
Comparing the high frequencies first, we must have $\lb\xi\rb^s|\xi|^{2j-1} \ls |\xi_1|^{2j-}\lb\xi_1\rb^s$, which is obviously
fulfilled independently of $s$. Comparing all frequencies, taking also into account the loss of $|\xi_2|^{\frac{1}{r}-\frac{1}{p}+}$
in the application of \eqref{mixedbill}, we end up with the demand $s>-1-\frac{1}{r'}$, which is weaker that both conditions 
\eqref{cond1} and \eqref{cond2}.

\quad

Case 2: $|\sigma_1 | \ge |\sigma_{0,2}|$. Here we restrict ourselves to $j \ge 2$, so that the Sobolev regularity $s$ is always
positive.

\quad

Subcase 2.1: $|\xi_1|\sim |\xi_2|$. We apply the resonance relation \eqref{rr2} first, which gives the gain 
$|\xi|^{\frac{1}{p}}|\xi_1|^{\frac{2j}{p}}$. Throwing away the $\lb \sigma_0 \rb^{b'}$, we get for this subregion the upper bound
$$\|(D_x^s\Lambda^b u_1)(D_x^{\frac{2j+1}{p'}-2}u_2)\|_{\widehat{L^r_x}(\widehat{L^p_t})} \ls 
\|u_1\|_{X^{r,p}_{s,b}}\|D_x^{\frac{2j+1}{p'}-2}u_2\|_{\widehat{L^{\infty}_{xt}}} \ls \|u_1\|_{X^{r,p}_{s,b}}\|u_2\|_{X^{r,p}_{s,b}},$$
provided
\begin{equation}
 \label{cond3}
s>\frac{2j+1}{p'}+\frac{1}{r}-2=\frac{2j+1}{p'}-\frac{1}{r'}-1,
\end{equation}
which is weaker than \eqref{cond2}, since $j \ge 2$.

\quad

Subcase 2.2: $|\xi_2|\ll |\xi_1|$. Here the resonance relation \eqref{rr2} provides a gain of $|\xi_2|^{\frac{1}{p}}|\xi_1|^{\frac{2j}{p}}$. The argument used in the previous subcase applies here as well, since the small
factor $|\xi_2|^{\frac{1}{p}}$ can always be used for the Sobolev type embedding in the last step. So we end up again with
condition \eqref{cond3}.

\quad

Subcase 2.3: $|\xi_1|\ll |\xi_2|$. The gain from the resonance relation here is $|\xi_1|^{\frac{1}{p}}|\xi_2|^{\frac{2j}{p}}$,
giving the upper bound
\begin{multline*}
\|(D_x^{\frac{2j+1}{p'}-2}\Lambda^b u_1)(D_x^su_2)\|_{\widehat{L^r_x}(\widehat{L^p_t})} \ls \\
\|D_x^{\frac{2j+1}{p'}-2}\Lambda^bu_1\|_{\widehat{L^{\infty}_x}(\widehat{L^p_t})}\|D_x^su_2\|_{\widehat{L^r_x}(\widehat{L^\infty_t})} \ls \|u_1\|_{X^{r,p}_{s,b}}\|u_2\|_{X^{r,p}_{s,b}},
\end{multline*}
which requires \eqref{cond3} again.

\quad

By symmetry the discussion of the case $|\sigma_2 | \ge |\sigma_{0,1}|$ is unnecessary, and the proof is complete.
\end{proof}
Next we estimate the cubic and higher nonlinearities in the KdV hierarchy. We do not aim for optimality here, since the most restrictive condition comes anyway from the quadratic terms.

\begin{lemma}
 \label{cubic+}
Let $j \ge 2$, $3 \le k \le j+1$, $l_0 \ge 1$, $l_1,\dots,l_k \ge 0$ with $\sum_{i=0}^k l_i = 2(j-k)+3$ and
$1<r\le p \le \frac{2j}{2j-1}$. Then, for $b>\frac{1}{p}$ and $s>\frac{2(j-k)+3}{2(k-1)}(1+\frac{1}{r'})$ the estimate
$$\|J^s \pd_x^{l_0}\prod_{i=1}^k \pd_x^{l_i}u_i\|_{\widehat{L^r_x}(\widehat{L^p_t})} \ls \prod_{i=1}^k\|u_i\|_{X^{r,p}_{s,b}}$$
holds true.
\end{lemma}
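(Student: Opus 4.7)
The plan is to reduce the $k$-linear estimate to the bilinear smoothing estimate \eqref{mixedbill} applied to the two highest-frequency factors, absorbing the remaining $k-2$ factors via H\"older and Sobolev embeddings. The large bilinear gain $2j/p=2j-1$ (at $p=\frac{2j}{2j-1}$) relative to the $m=2(j-k)+3$ derivatives of the nonlinearity yields a surplus of $2(k-2)$ smoothing derivatives, which is then redistributed across the auxiliary factors to reach the stated threshold.

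After reordering so that $|\xi_1|\ge|\xi_2|\ge\dots\ge|\xi_k|$ and bounding $\lb\xi\rb^s\xi^{l_0}\prod_i\xi_i^{l_i}\ls\lb\xi_1\rb^{s+m}$, I would replace the factor $|\xi_1|^{2j/p}$ by the bilinear multiplier symbol $m_j(\xi_1,\xi_2)^{1/p}$ to obtain, in the generic regime $|\xi_1|\gs|\xi_2|$,
\[
J^s\pd_x^{l_0}\prod_{i=1}^k\pd_x^{l_i}u_i\preceq M_{p,j}(J^{s+m-2j/p}u_1,u_2)\prod_{i=3}^ku_i.
\]
Since $s+m-2j/p=s-2(k-2)\le s$ for $k\ge3$, this exposes the $2(k-2)$ surplus. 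The resonant subcase $|\xi_1|\sim|\xi_2|$ with $|\xi|\ll|\xi_1|$, where the bilinear symbol $m_j$ degenerates, would be handled by Lemma \ref{weaksmooth}, which supplies a gain of $(2j-1)/(2r)-\e$ directly on the output frequency and is more than sufficient there.

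Next, the H\"older embedding $\widehat{L^r_x}(\widehat{L^p_t})\cdot(\widehat{L^\infty_{xt}})^{k-2}\hookrightarrow\widehat{L^r_x}(\widehat{L^p_t})$, combined with \eqref{mixedbill} on the bilinear piece and the Sobolev embedding $X^{r,p}_{\sigma,b}\hookrightarrow\widehat{L^\infty_{xt}}$ (needing $\sigma>\frac{1}{r}$ and $b>\frac{1}{p}$, the latter given), closes the argument under the crude assumption $s>\frac{1}{r}$. To reach the sharper stated threshold $s>\frac{m}{2(k-1)}(1+\frac{1}{r'})$, which can lie well below $\frac{1}{r}$ when $k$ is close to $j+1$, the $2(k-2)$ excess derivatives would be spread among the $k-2$ auxiliary factors by replacing $\widehat{L^\infty_{xt}}$ with $\widehat{L^{q_i}_x}(\widehat{L^\infty_t})$ for finite $q_i$ and upgrading the bilinear output to $\widehat{L^{r_0}_x}(\widehat{L^p_t})$ with $r_0>r$, via \eqref{xBronstein} and Young's convolution inequality. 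Matching each Sobolev margin $\frac{1}{r}-\frac{1}{q_i}$ against the $\frac{2(k-2)}{k-2}=2$ derivatives of slack per auxiliary factor produces the announced condition on $s$.

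The main obstacle is the exponent bookkeeping in this redistribution: one must simultaneously accommodate the $\frac{1}{r}-\frac{1}{p}+\e$ Sobolev loss of \eqref{mixedbill}, the $k-2$ individual Sobolev margins on the auxiliary factors, and the $s$-cost on the bilinear pair, all within the available $2(k-2)$ of surplus. All the ingredients---the mixed bilinear estimate \eqref{mixedbill}, the weak smoothing variant \eqref{weaksmoothest}, Sobolev embeddings between $\widehat{L^q}$-spaces, and Young's inequality---are already established in the paper, so the remaining work reduces to a tedious but routine case analysis on the frequency configuration.
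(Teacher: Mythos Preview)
Your proposal has the right ingredients---the bilinear estimate \eqref{mixedbill}, Lemma \ref{weaksmooth}, and Sobolev embeddings into $\widehat{L^\infty_{xt}}$---but the organisation differs from the paper's in a way that creates a real gap, and the redistribution scheme you describe is unnecessary.

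\medskip

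\textbf{Where the paper differs.} The paper pairs the \emph{highest and lowest} frequency factors, applying $M_{p,j}$ to $(u_1,u_k)$ rather than to $(u_1,u_2)$. The dichotomy is then simply $|\xi_1|\ge 2|\xi_k|$ (Case 1) versus $|\xi_1|\sim|\xi_k|$, i.e.\ all frequencies comparable (Case 2). In Case 1 one always has $m_j(\xi_1,\xi_k)\gtrsim|\xi_1|^{2j}$, regardless of the sign or proximity of $\xi_1,\xi_2$; putting $J^{-\frac{1}{r}-}u_i$ into $\widehat{L^\infty_{xt}}$ for $2\le i\le k-1$ (which requires only $s\ge 0$, not $s>\frac{1}{r}$) closes that case for every $s\ge 0$. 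In Case 2 pigeonhole provides two factors of the same sign on which Lemma \ref{weaksmooth} gains $|\xi_1|^{\frac{2j-1}{2r}-}$; a direct derivative count then yields exactly the threshold $s>\frac{2(j-k)+3}{2(k-1)}(1+\frac{1}{r'})$. No redistribution or upgraded H\"older exponents are needed.

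\medskip

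\textbf{The gap in your decomposition.} With your pairing $(u_1,u_2)$ the degeneracy of $m_j(\xi_1,\xi_2)$ is governed by $|\xi_1\pm\xi_2|$, not by the full output frequency $|\xi|=|\xi_1+\dots+\xi_k|$; these are different quantities once $k\ge 3$. In particular the region $|\xi_1|\sim|\xi_2|\gg|\xi_3|$ with $\xi_1,\xi_2$ of opposite sign is not covered: there $m_j(\xi_1,\xi_2)\sim|\xi_1+\xi_2||\xi_1|^{2j-1}\ll|\xi_1|^{2j}$, so \eqref{mixedbill} loses its force, while Lemma \ref{weaksmooth} applied to $u_1u_2$ gains only $|\xi_1+\xi_2|^{\frac{2j-1}{2r}-}$, which is again small. (Your claim that the gain lands ``directly on the output frequency'' and is ``more than sufficient'' is therefore incorrect here.) One can of course rescue this region by re-pairing $u_1$ with some $u_i$, $i\ge 3$---but that is precisely the paper's move of pairing with $u_k$, which avoids the difficulty from the start.
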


\begin{proof}
 By symmetry we may assume $|\xi_1|\ge \dots\ge|\xi_k|$. Then we have to distinguish only two cases.

\quad

Case 1: $|\xi_1|\ge 2 |\xi_k|$. Our assumptions imply $\frac{2j+1}{p'}<(k-1)(2-\frac{1}{r})$ and hence
$2(j-k)+3 +\frac{k-1}{r}-\frac{1}{p} < \frac{2j}{p}$. This gives
$$J^s \pd_x^{l_0}\prod_{i=1}^k \pd_x^{l_i}u_i \preceq M_{p,j}(J^su_1,J^{\frac{1}{p}-\frac{1}{r}-}u_k)
\prod_{i=2}^{k-1}J^{-\frac{1}{r}-}u_i.$$
Now the bilinear estimate \eqref{mixedbill} is combined with Sobolev type embeddings, which gives the desired bound in this case
for all $s \ge 0$.

\quad

Case 2: $|\xi_1|\sim |\xi_k|$. Certainly two frequencies have the same sign. To the corresponding factors we will apply Lemma \ref{weaksmooth}. We have to control $2(j-k)+3$ derivatives from the nonlinearity, $s$ derivatives from the norm, and $\frac{k-2}{r}+$
derivatives, which we have to spend on embeddings. On the other hand we have a gain of $\frac{2j-1}{2r}-$ derivatives from
\eqref{weaksmoothest} and $ks$ derivatives from the norms on the right. This leads to the condition
$$2(j-k)+3 +s+\frac{k-2}{r}< \frac{2j-1}{2r}+ks,$$
which is $s>\frac{2(j-k)+3}{2(k-1)}(1+\frac{1}{r'})$, as assumed.
\end{proof}

We observe that the lower bound on $s$ in Lemma \ref{cubic+} is smaller than that in Theorem \ref{quadratic}, which is decreasing
in $p$. So we choose $p=\frac{2j}{2j-1}$, and the decisive lower bound on $s$ becomes
$$s>j-\frac32-\frac{1}{2j}+\frac{2j-1}{2r'},$$
just as demanded in Theorem \ref{hoKdVlocal}.

\section{Further estimates related to the KdV equation}
 \label{lastsection}

In this last section we refine the analysis for the quadratic terms in the special case $j=1$. Notice that the $X_{s,b}^{r,p}$-norms in the sequel are always those with phase function $\phi(\xi)=\xi^3$. In the proof of Theorem \ref{quadratic} the region with
$|\sigma_{1}|\ge|\sigma_{0,2}|$ was estimated roughly by using nothing but the resonance relation and Sobolev type embeddings.
This was sufficient for $j\ge 2$, since another region gave a stronger condition on $s$, but has to be sharpened in order to obtain
Proposition \ref{KdVlocal}. To exploit the $\lb\sigma_0\rb$-weight we will apply the following lemma, which exhibits at least a weak
smoothing effect.
\begin{lemma}
 \label{lastbutone}
Let $1<r \le p \le 2$, $b' < - \frac{1}{p'}$, $b>\frac{1}{p}$, and $s_{1,2}\ge 0$. Then,
\begin{itemize}
 \item[a)] if $s_1+s_2>\frac{1}{r}-\frac{1}{r'}$, we have
\begin{equation}
 \label{mixedbill*}
\|M^*_{r',1}(u_1,u_2)\|_{X^{r,p}_{0,b'}}\ls \|J^{s_1}u_1\|_{\widehat{L^r_x}(\widehat{L^p_t})}\|u_2\|_{X^{r,p}_{s_2,b}},
\end{equation}
 \item[b)] if $\F_xu_1$ is supported outside $[-1,1]$ and $s_1+s_2>1-\frac{7}{2r'}$, the estimate
\begin{equation}
 \label{weaksmoothest*}
\|(D_x^{\frac{1}{2r'}}u_1)u_2\|_{X^{r,p}_{0,b'}}\ls \|J^{s_1}u_1\|_{\widehat{L^r_x}(\widehat{L^p_t})}\|u_2\|_{X^{r,p}_{s_2,b}}
\end{equation}
holds true.
\end{itemize}
\end{lemma}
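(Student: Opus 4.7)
For part (a), my plan is duality against $X^{r',p'}_{0,-b'}$. I would fix $u_3$ in the unit ball of that space and write the pairing $\langle M^*_{r',1}(u_1,u_2), u_3\rangle$ on the Fourier side. The change of variables $\eta_1 = \xi_1+\xi_2$, $\eta_2 = -\xi_2$ in the resulting trilinear frequency integral converts the symbol $(|\xi_1||\xi_1+2\xi_2|)^{1/r'}$ of $M^*_{r',1}$ into the symbol $(|\eta_1+\eta_2||\eta_1-\eta_2|)^{1/r'}$ of $M_{r',1}$, up to the obvious reflections/conjugations of the $u_2$ and $u_3$ factors that leave all norms invariant --- this is exactly the mechanism used to derive \eqref{202} from \eqref{xSemendjajew}. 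I would then factor off $u_1$ by the weighted duality
$$|\langle u_1,v\rangle| \le \|J^{s_1}u_1\|_{\widehat{L^r_x}(\widehat{L^p_t})}\|J^{-s_1}v\|_{\widehat{L^{r'}_x}(\widehat{L^{p'}_t})},$$
so that the proof reduces to
$$\|J^{-s_1}M_{r',1}(\tilde u_3,\tilde u_2)\|_{\widehat{L^{r'}_x}(\widehat{L^{p'}_t})} \ls \|u_3\|_{X^{r',p'}_{0,-b'}}\|u_2\|_{X^{r,p}_{s_2,b}}.$$
This follows from \eqref{xBronstein} of Corollary \ref{xBill} combined with Sobolev embeddings in $\xi$, and the numerical condition $s_1+s_2>\frac{1}{r}-\frac{1}{r'}$ is exactly what those embeddings demand once the symbol exponent $\frac{1}{r'}$ of $M_{r',1}$ is absorbed.

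For part (b), I would again dualize against $u_3\in X^{r',p'}_{0,-b'}$; since $-b'>\frac{1}{p'}$ and $b>\frac{1}{p}$, the factors $u_2$ and $u_3$ both carry usable modulation weights, whereas $u_1$ does not. The frequency interactions split into three regimes according to which of $\lb\sigma\rb,\lb\sigma_1\rb,\lb\sigma_2\rb$ majorises $3|\xi_1\xi_2\xi|$, as is forced by the resonance identity $\sigma_1+\sigma_2-\sigma=3\xi_1\xi_2\xi$ valid for $j=1$. When $\lb\sigma\rb$ or $\lb\sigma_2\rb$ dominates I would trade the corresponding weight for a positive power of $|\xi_1\xi_2\xi|$ and reduce to Corollary \ref{xBill} applied to $M_{p,1}$, with the factor $|\xi_1|^{1/(2r')}$ from the symbol redistributed onto the bilinear multiplier. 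The delicate regime is $\lb\sigma_1\rb\gs|\xi_1\xi_2\xi|$, where no modulation weight on $u_1$ is available; here I would appeal to the weak-smoothing mechanism, invoking the lower bound $|g'(x)|\gs|h(x)|^{1/2}|\xi|^{1/2}$ from the proof of Lemma \ref{weaksmooth} together with H\"older applied at the level of the bilinear kernel and the same dyadic decomposition in $|\xi_1-\xi_2|$ employed there. The support hypothesis $|\xi_1|\ge 1$ is what allows me to treat $|\xi_1|^{1/(2r')}$ as $\lb\xi_1\rb^{1/(2r')}$ freely throughout the bookkeeping.

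The main obstacle is precisely this last regime of part (b). Lemma \ref{weaksmooth} as stated requires $u_1\in X^{r,p}_{0,b}$, whereas here I only have $J^{s_1}u_1\in\widehat{L^r_x}(\widehat{L^p_t})$, so its proof has to be re-run in an asymmetric form. Concretely, the H\"older step in $\tau_1$ that in Lemma \ref{weaksmooth} absorbed $\lb\sigma_1\rb^{-b}$ against the $u_1$-modulation weight must be replaced by an $L^{p'}_{\tau_1}$-integration against the pointwise Fourier bound on $u_1$ supplied by the $\widehat{L^r_x}(\widehat{L^p_t})$-norm; only then do I carry out the $\xi_1$-integration via the change of variable $d\tau\mapsto g'(\xi_1;x)\,dx$ as before, and sum the resulting dyadic pieces. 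The threshold $s_1+s_2>1-\frac{7}{2r'}$ in the hypothesis is the balance at which this asymmetric argument closes, reflecting the combined gain from the $|g'|^{-1/2}$ factor, the resonance-driven $|\xi|^{1/r'}$ gain, the symbol exponent $\frac{1}{2r'}$, and the Sobolev losses in the $\xi_2$-integration.
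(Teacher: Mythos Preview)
Your route is genuinely different from the paper's, and for part (a) it has a real gap. The paper does \emph{not} dualize back to $M_{r',1}$ and invoke Corollary~\ref{xBill}; instead it works directly. After a single time-Sobolev embedding replacing $\|u_2\|_{X^{r,p}_{s_2,b}}$ by $\|u_2\|_{X^{r,p'}_{s_2,-b'}}$, it tests against a generic $\phi\in L^{r}_{\xi}(L^{p}_{\tau})$ and uses H\"older plus \cite[Lemma 4.2]{GTV97} in the $\tau$-variable to collapse the two modulation weights $\lb\sigma_0\rb^{b'}\lb\sigma_2\rb^{b'}$ into a single resonance weight $\lb\sigma_{res}\rb^{b'}$, where $\sigma_{res}=\tau_1-\xi_1^3+3\xi_1\xi(\xi-\xi_1)$. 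A second H\"older in $\xi$, with the multiplier split as $W=W_1W_2$, then reduces everything to the finiteness of $\int W_1^{r'}\lb\sigma_{res}\rb^{b'r'}d\xi$. For (a) one takes $W_1=(|\xi_1||\xi-\tfrac{\xi_1}{2}|)^{1/r'}$ (exactly the symbol of $M^*_{r',1}$), $W_2=1$, and the change of variable $h=\xi_1(\xi-\tfrac{\xi_1}{2})^2$ makes this integral finite; for (b) the weight $|\xi_1|^{1/(2r')}$ is split differently according to whether $|\xi-\tfrac{\xi_1}{2}|\lessgtr 1$. The threshold $s_1+s_2>1-\tfrac{7}{2r'}$ falls out of a final H\"older in $\xi_1$. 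No case distinction on which $\lb\sigma_i\rb$ dominates is ever made.

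The gap in your (a): your claimed reduction to \eqref{xBronstein} does not close when $r<p$. In Corollary~\ref{xBill} the symbol exponent of $M_{p,j}$ is tied to the time exponent of the output $\widehat{L^p_t}$ (and to the second index of the input $X^{\cdot,p}$-spaces). After your duality the target time space is $\widehat{L^{p'}_t}$, but the symbol carried over is $m_1^{1/r'}$ with $1/r'\le 1/p'$. Running the proof of \eqref{Bronstein} with this mismatched exponent produces an extra factor $m_1^{1-p/r}$ inside the $L^p_\tau$-integral, which blows up where $m_1=|\eta_1+\eta_2||\eta_1-\eta_2|\to 0$; nothing in your outline absorbs this. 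Equivalently, the range condition $q\le r_{1,2}\le p$ in Corollary~\ref{xBill} forces $p=r$ once you set the Lebesgue indices to what you need. The paper's direct $\sigma_{res}$-argument sidesteps this completely because the weight $W_1$ is chosen to match the symbol exactly.

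For (b), your three-case resonance decomposition is not wrong in principle but is considerably heavier than the paper's argument, and your sketch of the ``delicate regime'' $\lb\sigma_1\rb$ dominant is too vague to verify the threshold $s_1+s_2>1-\tfrac{7}{2r'}$: you would have to redo the entire weight calculus of Lemma~\ref{weaksmooth} with $u_1$ carrying no modulation weight, and the dyadic summation there was already borderline. The paper's unified $\sigma_{res}$-approach handles (a) and (b) with the same six-line template and different choices of $W_1,W_2$; I would recommend adopting it.
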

\begin{proof}
 By a Sobolev type embedding in time, it is sufficient in both cases to prove the upper bound
$$\dots \ls \|J^{s_1}u_1\|_{\widehat{L^r_x}(\widehat{L^p_t})}\|u_2\|_{X^{r,p'}_{s_2,-b'}}.$$
For that purpose we choose $f_{1,2}$ with $\|f_1\|_{L^{r'}_{\xi}(L^{p'}_{\tau})}=\|J^{s_1}u_1\|_{\widehat{L^r_x}(\widehat{L^p_t})}$
and $\|f_2\|_{L^{r'}_{\xi}(L^{p}_{\tau})}=\|u_2\|_{X^{r,p'}_{s_2,-b'}}$. Then we have to show
\begin{multline*}
 \|\lb\sigma_0\rb^{b'}\int_* d\xi_1d\tau_1 W(\xi,\xi_1)\lb\xi_1\rb^{-s_1}\lb\xi_2\rb^{-s_2}
\lb\sigma_2\rb^{b'}f_1(\xi_1,\tau_1)f_2(\xi_2,\tau_2)\|_{L^{r'}_{\xi}(L^{p'}_{\tau})}\\
\ls\|f_1\|_{L^{r'}_{\xi}(L^{p'}_{\tau})}\|f_2\|_{L^{r'}_{\xi}(L^{p}_{\tau})},\hspace{4cm}
\end{multline*}
where the weight $W(\xi,\xi_1)$ will be specified later. Testing with a generic function $\phi \in L^{r}_{\xi}(L^{p}_{\tau})$,
using Fubini's theorem and H\"older's inequality we reduce matters to showing that
\begin{multline}
 \label{x}
\|\lb\xi_1\rb^{-s_1}\int_* d \xi d\tau W(\xi,\xi_1)\lb\xi_2\rb^{-s_2}
\lb\sigma_0\rb^{b'}\lb\sigma_2\rb^{b'}\phi(\xi,\tau)f_2(\xi_2,\tau_2)\|_{L^{r}_{\xi_1}(L^{p}_{\tau_1})}\\
\ls \|\phi\|_{L^{r}_{\xi}(L^{p}_{\tau})}\|f_2\|_{L^{r'}_{\xi}(L^{p}_{\tau})}.\hspace{4cm}
\end{multline}
By H\"older's inequality and \cite[Lemma 4.2]{GTV97} we have
$$\int_* d \tau \lb\sigma_0\rb^{b'}\lb\sigma_2\rb^{b'}\phi(\xi,\tau)f_2(\xi_2,\tau_2)
\ls\lb\sigma_{res}\rb^{b'}\left(\int_* d\tau|\phi(\xi,\tau)f_2(\xi_2,\tau_2)|^p\right)^{\frac{1}{p}},$$
where $\sigma_{res}=\tau_1-\xi_1^3+3\xi_1\xi(\xi-\xi_1)=\tau_1-\frac{\xi_1^3}{4}+3\xi_1x^2$ with $x=\xi-\tfrac{\xi_1}{2}$.
Writing $W(\xi,\xi_1)=W_1(\xi,\xi_1)W_2(\xi,\xi_1)$ we obtain by a second H\"older application
\begin{eqnarray*}
 \int_* d \xi d\tau W(\xi,\xi_1)\lb\xi_2\rb^{-s_2}
\lb\sigma_0\rb^{b'}\lb\sigma_2\rb^{b'}\phi(\xi,\tau)f_2(\xi_2,\tau_2)\hspace{0.8cm}\\
\ls \Big( \int_* d \xi W_1(\xi,\xi_1)^{r'} \lb\sigma_{res}\rb^{b'r'}\Big)^{\frac{1}{r'}} \times \hspace{3cm}\\
\Big(\int_* d \xi W_2(\xi,\xi_1)^{r}\lb\xi_2\rb^{-s_2r} \Big(\int_*d \tau|\phi(\xi,\tau)f_2(\xi_2,\tau_2)|^p\Big)^{\frac{r}{p}}\Big)^{\frac{1}{r}}
\end{eqnarray*}
Assume now the first factor to be bounded. Then we can use Minkowski's inequality to estimate the left hand side of \eqref{x} by
\begin{multline}
 \label{xx}
\|\lb\xi_1\rb^{-s_1} \Big(\int d \xi W_2(\xi,\xi_1)^{r}\lb\xi_2\rb^{-s_2r}
\|\phi(\xi,\cdot)\|^r_{L^{p}_{\tau}}\|f_2(\xi_2,\cdot)\|^r_{L^{p}_{\tau}} \Big)^{\frac{1}{r}}\|_{L^{r}_{\xi_1}}\\
= \|\|\phi(\xi,\cdot)\|_{L^{p}_{\tau}}\Big(\int d\xi_1 \lb\xi_1\rb^{-s_1r}\lb\xi_2\rb^{-s_2r}
W_2(\xi,\xi_1)^{r}\|f_2(\xi_2,\cdot)\|^r_{L^{p}_{\tau}}\Big)^{\frac{1}{r}}\|_{L^{r}_{\xi}}.
\end{multline}
Now we specify the weights. For part a) we choose $W_1(\xi,\xi_1)=(|\xi_1||\xi-\tfrac{\xi_1}{2}|)^{\frac{1}{r'}}$ and $W_2(\xi,\xi_1)=1$, so that with $h(\xi)=\xi_1(\xi-\tfrac{\xi_1}{2})^2$
$$\int d \xi W_1(\xi,\xi_1)^{r'} \lb\sigma_{res}\rb^{b'r'} = \int dh \lb\tau_1 - \tfrac{\xi_1^3}{4}+3h\rb^{b'r'}\le c.$$
The right side of \eqref{xx} is then estimated by H\"older's inequality using $s_1+s_2>\frac{1}{r}-\frac{1}{r'}$, which gives
\eqref{x}. For part b) we distinguish two cases. If $|\xi - \tfrac{\xi_1}{2}|\le 1$ we choose
$W_1(\xi,\xi_1)=|\xi_1|^{\frac{1}{2r'}}$ and $W_2(\xi,\xi_1)=\chi_{\{|\xi - \tfrac{\xi_1}{2}|\le 1\}}$. Then
\begin{multline*}
\int d \xi W_1(\xi,\xi_1)^{r'} \lb\sigma_{res}\rb^{b'r'} 
=\int dx |\xi_1|^{\frac12}\lb\tau_1 - \tfrac{\xi_1^3}{4}+3\xi_1x^2\rb^{b'r'}\\
= \frac12 \int dh |h|^{-\frac12} \lb\tau_1 - \tfrac{\xi_1^3}{4}+3h\rb^{b'r'}\le c.
\end{multline*}
The right hand side of \eqref{xx} is again estimated by H\"older's inequality, which works for all nonnegative $s_{1,2}$.
If $|\xi - \tfrac{\xi_1}{2}|\ge 1$ we choose $W_1(\xi,\xi_1)=(|\xi_1||\xi - \tfrac{\xi_1}{2}|)^{\frac{1}{r'}}$ and 
$W_2(\xi,\xi_1)=|\xi_1|^{-\frac{1}{2r'}}|\xi - \tfrac{\xi_1}{2}|^{-\frac{1}{r'}}\chi_{\{|\xi - \tfrac{\xi_1}{2}|\ge 1\}}$,
so that $W(\xi,\xi_1)=|\xi_1|^{\frac{1}{2r'}}\chi_{\{|\xi - \tfrac{\xi_1}{2}|\ge 1\}}$. We have already seen that in this case
$\int d \xi W_1(\xi,\xi_1)^{r'} \lb\sigma_{res}\rb^{b'r'} \le c$, and the right hand side of \eqref{xx} becomes bounded by
\begin{multline*}\|\|\phi(\xi,\cdot)\|_{L^{p}_{\tau}}\|\lb\xi_1\rb^{-s_1-\frac{1}{2r'}}\lb\xi_2\rb^{-s_2}\lb\xi - \tfrac{\xi_1}{2}\rb^{-\frac{1}{r'}}\|f_2(\xi_2,\cdot)\|_{L^{p}_{\tau}}\|_{L^{r}_{\xi_1}}\|_{L^{r}_{\xi}}\\
\ls \|\phi\|_{L^{r}_{\xi}(L^{p}_{\tau})}\|f_2\|_{L^{r'}_{\xi}(L^{p}_{\tau})},
\end{multline*}
provided $s_1+s_2+\frac{3}{2r'}>\frac{1}{r}-\frac{1}{r'}$, as assumed.
\end{proof}
Equipped with Lemma \ref{lastbutone} we can turn to the estimate crucial for Proposition \ref{KdVlocal}.
\begin{lemma}
 \label{last}
Assume one of the following conditions to be fulfilled.
\begin{itemize}
 \item[a)] $1<r\le \frac75$, $s>-\frac14-\frac{11}{8r'}$, and $\frac{1}{p'}=\frac14 + \frac{5}{8r'}$;
 \item[b)] $\frac75 \le r \le 2$, $s>-\frac12-\frac{1}{2r'}$, and $\frac13(1+\frac{1}{r'})\le
 \frac{1}{p'}\le\min{(\frac12,\frac{3}{2r'})}$.
\end{itemize}
Then there exists $b'>-\frac{1}{p'}$, so that for all $b>\frac{1}{p}$ the estimate
$$\|\pd_x(u_1u_2)\|_{X^{r,p}_{s,b'}}\ls\|u_1\|_{X^{r,p}_{s,b}}\|u_2\|_{X^{r,p}_{s,b}}$$
holds true.
\end{lemma}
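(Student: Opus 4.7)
The plan is to follow the dyadic case decomposition from the proof of Theorem \ref{quadratic}, specialized to $j=1$, leaving the easy regions unchanged and refining the treatment of the regions where $\sigma_1$ or $\sigma_2$ is the dominant modulation by means of Lemma \ref{lastbutone}. The low-frequency region and the mixed region (one frequency $\lesssim 1$, one $\gtrsim 1$) are dispatched by Young's inequality and by the bilinear estimate \eqref{mixedbill} respectively, with no restriction on $s$. The core of the proof thus concerns $|\xi_1|, |\xi_2| \ge 1$, which I split by which of $\sigma_0, \sigma_1, \sigma_2$ is largest.

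In the $|\sigma_0|$-dominant region I simply reuse Case~1 of the proof of Theorem \ref{quadratic}: the resonance $|\xi \xi_1 \xi_2| \lesssim \lb\sigma_0\rb$ lets one absorb powers of the output frequency into the modulation weight, after which \eqref{mixedbill} closes the subregion where two frequencies dominate the third, and Lemma \ref{weaksmooth} closes the tight-resonant subregion $|\xi_1 - \xi_2| \ll |\xi_1|$. This yields the first threshold $s > -\tfrac12 - \tfrac{1}{2r'}$ appearing in both (a) and (b).

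In the $|\sigma_1|$-dominant region (the $|\sigma_2|$-dominant one is symmetric), I exploit the dominance $\lb\sigma_0\rb \le \lb\sigma_1\rb$ by writing
$$\lb\sigma_0\rb^{b'} = \lb\sigma_0\rb^{\tilde{b}'}\lb\sigma_0\rb^{b'-\tilde{b}'} \le \lb\sigma_0\rb^{\tilde{b}'}\lb\sigma_1\rb^{b'-\tilde{b}'}$$
with $\tilde{b}' < -\tfrac{1}{p'}$ chosen just below the threshold required by Lemma \ref{lastbutone}, so that the small positive residue $b' - \tilde{b}' > 0$ can be absorbed into $u_1$'s modulation weight, lowering its $b$-exponent by a harmless $\varepsilon$ while keeping it above $\tfrac{1}{p}$. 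With the output weight now admissible for Lemma \ref{lastbutone}, estimate \eqref{mixedbill*} absorbs the $\partial_x$-derivative cleanly whenever the symbol $(|\xi_1||\xi_1+2\xi_2|)^{1/r'}$ of $M^*_{r',1}$ is non-degenerate; in the remaining genuinely tight resonant pocket where this symbol degenerates, the weaker smoothing \eqref{weaksmoothest*} is used instead, delivering only $D_x^{1/(2r')}$ but with the very loose Sobolev constraint $s_1 + s_2 > 1 - \tfrac{7}{2r'}$. Distributing the $\lb\xi\rb^s$-weight optimally between the two inputs and optimizing in $p$ produces the second threshold $s > -\tfrac14 - \tfrac{11}{8r'}$ together with the displayed $p$-windows; the changeover at $r = 7/5$ is precisely the value at which the two thresholds coincide.

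The main obstacle is the delicate matching of $b'$-exponents just sketched: Lemma \ref{lastbutone} demands $b' < -\tfrac{1}{p'}$, whereas Lemma \ref{last} needs $b' > -\tfrac{1}{p'}$ for the fixed-point argument to close, and only the Case-2 dominance $\lb\sigma_0\rb \le \lb\sigma_1\rb$ allows the gap to be crossed. The resulting algebraic system of constraints on $(r,p,s,b,b',\tilde{b}',s_1,s_2)$ is what pins down the precise $p$-ranges in (a) and (b); in (a) the optimum is attained at the unique $\tfrac{1}{p'} = \tfrac14 + \tfrac{5}{8r'}$, while in (b) a whole interval of $p$ is admissible because the $|\sigma_0|$-dominant contribution already provides a bound strictly below anything produced by \eqref{mixedbill*} or \eqref{weaksmoothest*} in Case 2.
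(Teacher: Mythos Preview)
Your plan is essentially the paper's proof, and the key technical point you single out---the bridging of the requirement $b'<-\tfrac{1}{p'}$ in Lemma~\ref{lastbutone} with the $b'>-\tfrac{1}{p'}$ needed here, via $\lb\sigma_0\rb^{b'}\le\lb\sigma_0\rb^{\tilde b'}\lb\sigma_1\rb^{b'-\tilde b'}$ on the $\sigma_1$-dominant region---is exactly right, though the paper leaves it implicit in the ``$-$'' decorating the resonance gains.

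One inaccuracy in your bookkeeping: the $|\sigma_0|$-dominant region produces \emph{two} constraints, not one. Subsubcase~1.1.1 of Theorem~\ref{quadratic} (specialized to $j=1$) gives \eqref{cond1}, i.e.\ $s>-\tfrac12-\tfrac{1}{2r'}$, but Subsubcase~1.1.2 (the tight part $|\xi_1-\xi_2|\ll|\xi_1|$, handled by Lemma~\ref{weaksmooth}) gives \eqref{cond2}, i.e.\ $s>\tfrac12-\tfrac{3}{p'}+\tfrac{1}{2r'}$. The $|\sigma_1|$-dominant region then contributes, from the subcase $|\xi_1|\sim|\xi_2|\gg|\xi|$ treated via \eqref{mixedbill*}, the further constraint $s>\tfrac{1}{p'}-\tfrac{2}{r'}-\tfrac12$. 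The choice $\tfrac{1}{p'}=\tfrac14+\tfrac{5}{8r'}$ in (a) is precisely what makes these last two coincide and equal $-\tfrac14-\tfrac{11}{8r'}$; and the lower bound $\tfrac{1}{p'}\ge\tfrac13(1+\tfrac{1}{r'})$ in (b) comes from requiring \eqref{cond2} to be weaker than \eqref{cond1}, not from the Case-2 analysis. So the threshold in (a) is not purely a Case-2 artifact, and the $p$-window in (b) is bounded below by a Case-1 constraint and above by a Case-2 one. This does not affect the validity of your argument, but if you carry out the details with the attribution as you describe it you will be puzzled about where the conditions actually arise.
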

\begin{proof}
 We modify the analysis in the proof of Theorem \ref{quadratic}. The cases $|\xi_1|\le1$ and/or $|\xi_2|\le1$ as well as
$|\sigma_0|\ge|\sigma_{1,2}|$ remain unchanged. These led to the conditions $1<r \le p\le2$, $s>-\frac12-\frac{1}{2r'}$
\eqref{cond1}, and $s>\frac12-\frac{3}{p'}+\frac{1}{2r'}$ \eqref{cond2}\footnote{specialized to $j=1$}, which are easily checked.
By symmetry it remains only to discuss the region $|\sigma_1|\ge|\sigma_{0,2}|$. We distinguish several subcases.

\quad

Subcase 1: $|\xi_1|\ll|\xi_2|$. We apply part a) of Lemma \ref{lastbutone}, which gives a gain of
$|\xi_1|^{\frac{1}{r'}}|\xi_2|^{\frac{1}{r'}}$ and a loss of $|\xi_1|^{\frac{1}{r}-\frac{1}{r'}}$. We have to control
$|\xi|^{1+s}$ (from the nonlinearity and the norm on the left) and $|\xi_1|^{-s}|\xi_2|^{-s}$ (from the norms on the right),
whereas the resonance relation gives a gain of $|\xi_1|^{\frac{1}{p}-}|\xi_2|^{\frac{2}{p}-}$. Expressing all losses and gains in terms of $\xi_1$ and $\xi_2$, we must have
$$|\xi_1|^{\frac{1}{r}-\frac{1}{r'}-s}|\xi_2| \ls |\xi_1|^{\frac{1}{r'}+\frac{1}{p}-}|\xi_2|^{\frac{2}{p}+\frac{1}{r'}-},$$
which is fulfilled in its high frequency part, since $\frac{2}{p}\ge 1$ and $\frac{1}{r'}>0$. Comparing both, the high and the low
frequencies, we find the condition
\begin{equation}
 \label{cond13}
s>\frac{3}{p'}-\frac{4}{r'}-1.
\end{equation}

\quad

Subcase 2: $|\xi_2|\ll|\xi_1|$. The estimation here follows the same lines as in subcase 1 and leads again to condition \eqref{cond13}. It is in fact the more harmless case, since the gain in the application of Lemma \ref{lastbutone} lies completely
on the high frequency.

\quad

Subcase 3: $|\xi_1|\sim|\xi_2|\gg|\xi|$. Again we can apply part a) of Lemma \ref{lastbutone}, where now the loss falls unavoidably
onto a high frequency. Comparing gains and losses we must have
$$\lb\xi\rb^s|\xi||\xi_1|^{\frac{1}{r}-\frac{1}{r'}-2s+}\ls |\xi|^{\frac{1}{p}-}|\xi_1|^{\frac{2}{r'}+\frac{2}{p}-}.$$
Considering all frequencies here, we end up with \eqref{cond13} again, but for the high frequency we need
\begin{equation}
 \label{cond14}
s>\frac{1}{p'}-\frac{2}{r'}-\frac12,
\end{equation}
which is more restrictive than \eqref{cond13}, since our assumptions imply $\frac{1}{p'}\le \frac14 + \frac{1}{r'}$.

\quad

Subcase 4: $|\xi_1|\sim|\xi_2|\sim|\xi|$. Here the symbol of the Fourier multiplier $M^*_{r',1}$ can become small, so we shall use part b) of Lemma \ref{lastbutone}. As long as $\frac{1}{r'}>\frac27$ there are no expenses in its application, but a gain of
$|\xi|^{\frac{1}{2r'}}$. Comparing all gains and losses we are led to the condition $s>\frac{3}{p'}-\frac{1}{2r'}-2$, which is
weaker than \eqref{cond1}, since $\frac{1}{p'}\le \frac12$. If $\frac{1}{r'} \le \frac27$, we must have $s>\frac{3}{p'}-\frac{4}{r'}-1$, which is \eqref{cond13} again.

\quad

Here the case by case discussion is completed, and it remains to check \eqref{cond14}. For $1<r\le \frac75$ the choice
$\frac{1}{p'}=\frac14 + \frac{5}{8r'}$ implies that both conditions \eqref{cond2} and \eqref{cond14} become $s>-\frac14-\frac{11}{8r'}$, which is assumed and stronger than \eqref{cond1}. For $\frac75 \le r \le 2$ condition \eqref{cond1}
is assumed and - in the allowed range for $\frac{1}{p'}$ - stronger than \eqref{cond2} and \eqref{cond14}.
\end{proof}

\newpage

\end{document}